\newenvironment{conj*}{\begin{enonce*}[plain]{Conjecture}}{\end{enonce*}}
\newenvironment{theo*}{\begin{enonce*}[plain]{Th\'eor\`eme}}{\end{enonce*}}
\theoremstyle{plain}
\newcounter{moncompteurtheointro}
\newcounter{moncompteurcorointro}
\newenvironment{theointro}{\refstepcounter{moncompteurtheointro}\begin{enonce*}[plain]{Th\'eor\`eme \themoncompteurtheointro}}{\end{enonce*}}
\newenvironment{corointro}{\refstepcounter{moncompteurcorointro}\begin{enonce*}[plain]{Corollaire \themoncompteurcorointro}}{\end{enonce*}}
\author{Hugues Bauch\`ere}
\address{
Laboratoire de math\'ematiques Nicolas Oresme, CNRS UMR 6139,
Universit\'e de Caen, Campus II, BP 5186,
14032 Caen Cedex, France}
\email{hugues.bauchere@unicaen.fr}
\urladdr{http://www.math.unicaen.fr/~bauchere/}
\title[Minoration de la hauteur pour les modules de Drinfeld CM]
 {Minoration de la hauteur canonique pour les modules de Drinfeld \`a multiplications complexes}
\date{\today}
\DeclareMathOperator{\id}{id}
\DeclareMathOperator{\End}{End}
\DeclareMathOperator{\Gal}{Gal}
\DeclareMathOperator{\sgn}{sgn}
\newcommand{\N}{\mathbb{N}}
\newcommand{\R}{\mathbb{R}}
\newcommand{\C}{\mathbb{C}}
\newcommand{\Q}{\mathbb{Q}}
\newcommand{\F}{\mathbb{F}}
\newcommand{\Z}{\mathbb{Z}}
\newcommand{\Ocal}{\mathcal{O}}
\newcommand{\Kab}{K^{\mathrm{ab}}}
\newcommand{\Zcal}{\mathcal{Z}}
\newcommand{\Rcal}{\mathcal{R}}
\newcommand{\pcar}{\mathfrak{p}}
\newcommand{\qcar}{\mathfrak{q}}
\newcommand{\kbar}{\overline{k}}
\newcommand{\mcar}{\mathfrak{m}}
\newcommand{\ncar}{\mathfrak{n}}
\newcommand{\OF}{\mathcal{O}_F}
\newcommand{\OK}{\mathcal{O}_K}
\newcommand{\OL}{\mathcal{O}_L}
\newcommand{\Oe}{\mathcal{O}_E}
\newcommand{\nbclass}{\mathfrak{h}}
\newcommand{\hw}{h}
\newcommand{\hcan}{\widehat{h}}
\newcommand{\maxR}{d_m}
\newcommand{\maxB}{d_{\OF}}
\newcommand{\maxN}{d_m}
\newcommand{\tors}{\mathrm{tors}}
\newcommand{\Mcal}{\mathcal{M}}
\newcommand{\cf}{\emph{cf.} }
\newcommand{\ie}{\emph{i.e.} }
\newcommand{\dinfprime}{d}
\newcommand{\Ccar}{\mathfrak{C}}
\newcommand{\vinf}{v_{\infty}}
\newcommand{\vinfprime}{v_{\infty'}}
\newcommand{\Cinf}{\mathbb{C}_{\infty}}
\newcommand{\Kbar}{\overline{K}}
\newcommand{\Ftilde}{\widetilde{F}}
\newcommand{\Kinf}{K_{\infty}}
\newcommand{\Finf}{F_{\infty'}}
\newcommand{\FFinf}{\F_{\infty'}}
\newcommand{\Fbar}{\overline{\F}}
\newcommand{\acar}{\mathfrak{a}}
\newcommand{\bcar}{\mathfrak{b}}
\newcommand{\degOF}{\deg_{\OF}}
\newcommand{\degOK}{\deg_{\OK}}
\newcommand{\degOE}{\deg_{\Oe}}
\newcommand{\degOL}{\deg_{\OL}}
\newcommand{\degR}{\deg_{R}}
\newcommand{\degA}{\deg_{A}}
\newcommand{\OcalHFplus}{\Ocal_{H_F^+}}
\newcommand{\HFplus}{H_F^+}
\newcommand{\Hqcar}{H_\qcar}
\newcommand{\OM}{\mathcal{O}_M}
\newcommand{\abar}{\bar{a}}
\newcommand{\bbar}{\bar{b}}
\newcommand{\minibullet}{\scriptscriptstyle{\bullet}}
\newcommand{\cardFtilde}{\varsigma}
\newcommand{\Icar}{\mathfrak{I}}
\begin{document}

\raggedbottom

\frontmatter

\begin{abstract}
 Soient $K$ une extension finie de $\F_q(T)$, $L/K$ une extension galoisienne de groupe de Galois~$G$
 et $E$ le sous-corps de $L$ fix\'e par le centre de $G$.
 On suppose qu'il existe une place finie $v$ de $K$ telle que les degr\'es locaux de $E/K$ au-dessus de $v$ soient born\'es.
 Soit $\phi$ un module de Drinfeld \`a multiplications complexes.
 On donne une minoration effective de la hauteur canonique associ\'ee \`a $\phi$ sur $L$ priv\'e des points de torsion de $\phi$.
 Dans le cadre des corps de nombres, ce probl\`eme a \'et\'e r\'esolu par F. Amoroso, S. David et U. Zannier dans \cite{ADZ}.
\end{abstract}

\alttitle{Lower Bound for the Canonical Height for Drinfeld Modules with Complex Multiplication}

\begin{altabstract}
 Let $K$ be a finite extension of $\F_q(T)$, let $L/K$ be a Galois extension with Galois group~$G$
 and let $E$ be the subfield of $L$ fixed by the center of $G$.
 Assume that there exists a finite place $v$ of $K$ such that the local degrees of $E/K$ above $v$ are bounded.
 Let $\phi$ be a Drinfeld module with complex multiplication.
 We give an effective lower bound for the canonical height of $\phi$ on $L$ outside the torsion points of $\phi$.
 In the number field case, this problem was solved by F. Amoroso, S. David and U. Zannier in \cite{ADZ}.
\end{altabstract}

\subjclass{11G09 (11G50 11R37)}
\keywords{Bogomolov, module de Drinfeld, hauteur canonique, minoration}
\altkeywords{Bogomolov, Drinfeld module, canonical height, lower bound}

\maketitle

\renewcommand{\contentsname}{Sommaire}
\tableofcontents

\section*{Introduction}

 Soient $A:=\F_q[T]$ l'anneau des polyn\^omes en l'ind\'etermin\'ee $T$
 et \`a coefficients dans le corps fini~$\F_q$,~$k$ son corps des fractions
 et $\bar{k}$ une cl\^oture alg\'ebrique de $k$.
 
 Soit $\phi$ un $A$-module de Drinfeld d\'efini sur $\kbar$ et de rang $r$.
 Dans \cite{DENIS1}, L. Denis d\'efinit la hauteur canonique $\hcan_\phi:\kbar\longrightarrow\R_+$ associ\'ee \`a un $A$-module de Drinfeld $\phi$.
 Il montre que cette hauteur s'annule uniquement sur les points de torsion de~$\phi$.

 Soit $L/k$ une extension de $k$. Par analogie avec la terminologie utilis\'ee dans \cite{BOMZAN}, on dira que $L$ a la propri\'et\'e (B,$\phi$)
 s'il existe une constante strictement positive qui minore $\hcan_\phi$ sur $L$
 priv\'e des points de torsion de~$\phi$.

 S. David et A. Pacheco ont montr\'e dans \cite{DAVPAC} que pour tout module de Drinfeld $\phi$,
 si $K/k$ est une extension finie, alors
 la cl\^oture ab\'elienne de $K$ avait la propri\'et\'e (B,$\phi$).
 Dans cet article nous g\'en\'eralisons, dans le cadre des modules de Drinfeld \`a multiplications complexes,
 ce r\'esultat (on renvoie au \S\ref{sec:prelim} pour les d\'efinitions pr\'ecises des notions utilis\'ees).


\begin{theointro}\label{thlehmerB}\label{THLEHMERB}
 Soit $\phi$ un $A$-module de Drinfeld d\'efini sur $\kbar$ et \`a multiplications complexes.
 Soient $K/k$ une extension finie et $L/K$ une extension galoisienne (finie ou infinie) de groupe de Galois $G$.
 Soient $H$ un sous-groupe du centre de $G$ et $E\subseteq L$ le sous-corps fix\'e par $H$.
 Soit $d_0>0$ un entier. On suppose qu'il existe une place finie $v$ de $K$
 telle que pour toute place $w$ de~$E$ au-dessus de $v$, on ait $\left[E_w:K_v\right]\leq d_0$.
 Alors~$L$ a la propri\'et\'e (B,$\phi$).
 Plus pr\'ecis\'ement,
 il existe une constante $c_0>0$ qui ne d\'epend que de $\phi$ telle que pour tout $\alpha\in L$ non de torsion pour $\phi$, on~ait:
 \[\hcan_\phi(\alpha)\geq q^{-c_0\,\deg v\,d_0^2\,[K:k]}\text{.}\]
\end{theointro}
\indent On illustre la tour d'extensions que nous venons de d\'ecrire par le diagramme suivant:
\[\xymatrix@-4ex{
  L \\
  E \ar@{-}[u]_{<\Zcal(G)} \\
  K \ar@{-}[u] \ar@/^3pc/@{--}[uu]^{G} \\
  k \ar@{-}[u]
 }\]

Notons que lorsque l'extension $L/K$ est finie, alors $L$ a la propri\'et\'e (B,$\phi$) d'apr\`es le th\'eor\`eme de Northcott
(\cf th\'eor\`eme \ref{hautcan}, point $4$).

Le th\'eor\`eme \ref{thlehmerB} est donc surtout int\'eressant lorsque l'extension galoisienne $L/K$ est infinie.

Remarquons que ce th\'eor\`eme est l'analogue pour les modules de Drinfeld \`a multiplications complexes
d'un r\'esultat obtenu par F. Amoroso, S. David et U. Zannier dans \cite{ADZ}
dans le cas du groupe multiplicatif.
Dans ce cadre, m\^eme pour une extension ab\'elienne $L$ d'un corps de nombres~$K$,
le fait d'obtenir une minoration d\'ependant seulement du degr\'e $[K:\Q]$ n'\'etait pas possible \`a l'aide des techniques de \cite{AMOZAN1}.
Pour ce faire les auteurs ont d\^u modifier leur m\'ethode (\cf \cite{AMOZAN2}), notamment pour ne pas utiliser le th\'eor\`eme de Kronecker-Weber.
Pour ce qui est des modules de Drinfeld, la minoration obtenue dans \cite{DAVPAC} n'est pas explicite.
Dans le cadre de modules CM, leur m\'ethode permettrait probablement d'obtenir une minoration d\'ependant seulement du degr\'e $[K:k]$.
Ici nous utilisons, comme dans \cite{ADZ}, la m\'ethode de \cite{AMOZAN2}, ce qui nous permet d'obtenir en particulier,
dans le cadre des modules CM, une version du th\'eor\`eme de \cite{DAVPAC} avec une constante d\'ependant seulement du degr\'e $[K:k]$:

\begin{corointro}
 Soit $\phi$ un $A$-module de Drinfeld d\'efini sur $\kbar$ \`a multiplications complexes.
 Soient $K/k$ une extension finie et $\Kab$ la cl\^oture ab\'elienne de $K$.
 Alors $\Kab$ a la propri\'et\'e~(B,$\phi$).
 Plus pr\'ecis\'ement,
 il existe une constante $c_0>0$ qui ne d\'epend que de $\phi$ telle que pour tout $\alpha\in\Kab$ non de torsion pour $\phi$, on ait:
 \[\hcan_\phi(\alpha)\geq q^{-c_0\,[K:k]^2}\text{.}\]
\end{corointro}

\begin{proof}
 On applique le th\'eor\`eme \ref{thlehmerB} dans le cas $L=\Kab$ en prenant $H=G$. Alors ${E=K}$,
 on peut ainsi prendre $d_0=1$ et choisir n'importe quelle place finie et non triviale de $K$. 
 Prenons une place finie~$v$ de $K$ telle que $\deg v\leq[K:k]$
 (une telle place existe, il suffit par exemple de prendre une place de $K$ au-dessus de la place en $T$ de $k$).
 D'apr\`es le th\'eor\`eme \ref{thlehmerB}, il existe une constante $c_0>0$ ne d\'ependant que de $\phi$
 telle que pour tout $\alpha\in\Kab$ non de torsion pour $\phi$, on a:
 \[\hcan_\phi(\alpha)\geq q^{-c_0\,[K:k]^2}\text{,}\]
 ce qui permet de conclure.
\end{proof}

S'il existe une place finie $v$ de $K$
telle que pour toute place $w$ de~$L$ au-dessus de $v$, on ait ${\left[L_w:K_v\right]\leq d_0}$ pour un certain entier $d_0$,
alors, en consid\'erant le sous-groupe trivial 
du centre du groupe de Galois de l'extension $L/K$, on obtient le corollaire suivant:

\begin{corointro}
 Soit $\phi$ un $A$-module de Drinfeld d\'efini sur $\kbar$ et \`a multiplications complexes.
 Soient $K/k$ une extension finie et $L/K$ une extension galoisienne de groupe de Galois $G$.
 Soit~$d_0>0$ un entier. S'il existe une place finie $v$ de $K$
 telle que pour toute place $w$ de~$L$ au-dessus de $v$, on ait $\left[L_w:K_v\right]\leq d_0$,
 alors $L$ a la propri\'et\'e (B,$\phi$).
 Plus pr\'ecis\'ement,
 il existe une constante $c_0>0$ qui ne d\'epend que de $\phi$ telle que pour tout $\alpha\in L$ non de torsion pour $\phi$, on ait:
 \[\hcan_\phi(\alpha)\geq q^{-c_0\,\deg v\,d_0^2\,[K:k]}\text{.}\]
\end{corointro}

Afin de d\'emontrer le th\'eor\`eme \ref{thlehmerB}, nous aurons besoin de fa\c{c}on cruciale d'une congruence
(un \og rel\`evement du Frobenius \fg, \cf proposition \ref{propCongModDNormalise} ci-apr\`es).
Celle-ci n'\'etant disponible que pour les modules de Drinfeld de signe normalis\'e (\cf \S\ref{subsectionThHayes} pour la d\'efinition),
nous nous ram\`enerons au cas de modules de Drinfeld de signe normalis\'e.
Nous nous ram\`enerons \'egalement au cas d'extensions finies.
Plus pr\'ecis\'ement, nous d\'eduirons le th\'eor\`eme \ref{thlehmerB} du th\'eor\`eme suivant:

\begin{theointro}\label{thlehmerBpsifini}\label{THLEHMERBPSIFINI}
 Soient $F/k$ une extension CM finie, $\OF$ son anneau d'entiers,
 $d$ le degr\'e sur $\F_q$ de l'unique place de $F$ au-dessus de $\infty=\frac{1}{T}$
 et~$\psi$ un $\OF$-module de Drinfeld de signe normalis\'e.
 Soient $K/k$ une extension finie telle que~$\psi$ soit d\'efini sur $K$
 et $L/K$ une extension galoisienne finie de groupe de Galois $G$.
 Soient $H$ un sous-groupe du centre de $G$ et $E\subseteq L$ le sous-corps fix\'e par~$H$.
 Soient $d_1>0$ un entier et $v$ une place finie de $K$.
 On suppose que pour toute place $w$ de $E$ au-dessus de~$v$, on ait $\left[E_w:K_v\right]\leq d_1$.
 Alors il existe deux constantes $c_1>0$ et $c_2\geq1$ qui ne d\'ependent que de $\psi$
 telles que pour tout $\alpha\in L$ non de torsion pour $\psi$, on~ait:
 \[\hcan_{\psi}(\alpha)\geq\frac{q^{-c_1\,\deg v\,d_1^2\,[K:k]}}{c_2\,d_1^{q^d-1}\,[K:k]^{q^r}}\text{.}\]
\end{theointro}

Le plan de l'article est le suivant.
Au \S\ref{sec:prelim}, nous rassemblons les pr\'eliminaires n\'ecessaires sur les corps de fonctions en caract\'eristique positive,
sur les modules de Drinfeld et sur leur hauteur canonique.
Au \S\ref{chapB}, on d\'emontre le th\'eor\`eme \ref{thlehmerBpsifini}.
Enfin, au \S\ref{demothlehmerB}, on en d\'eduit le th\'eor\`eme \ref{thlehmerB}.

\mainmatter

\section{Pr\'eliminaires}\label{sec:prelim}

\subsection{Notations g\'en\'erales}\label{subsectionnotagen}
\leavevmode\par

 Soit $p$ un nombre premier et $q$ une puissance de $p$. On consid\`ere $\F_q$ le corps fini \`a $q$ \'el\'ements,
 $A:=\F_q[T]$ l'anneau des polyn\^omes \`a coefficients dans $\F_q$ en l'ind\'etermin\'ee $T$ et $k$ le corps des fractions de $A$.
 Soit $a\in A$, on note $\deg_T a$ le degr\'e de $a$ en tant que polyn\^ome en $T$.
 
 Soient $\infty$ la place en $\frac{1}{T}$ et
 $k_{\infty}:=\F_q\!\left(\!\left(\frac{1}{T}\right)\!\right)$ le corps des s\'eries de Laurent sur $\F_q$,
 c'est un compl\'et\'e de $k$ pour la valuation $\frac{1}{T}$-adique $v_{\infty}=-\deg_T$.
 On fixe $\bar{k}_{\infty}$ une cl\^oture alg\'ebrique de $k_{\infty}$
 et $\C_{\infty}$ un compl\'et\'e de $\bar{k}_{\infty}$ pour la valuation $v_{\infty}$ que l'on a \'etendue naturellement \`a $\bar{k}_{\infty}$.
 Ensuite, on normalise sur~$\C_{\infty}$ la valeur absolue associ\'ee \`a la valuation $v_{\infty}$ en posant $|\cdot|_{\infty}:=q^{-v_{\infty}(\cdot)}$.
 On note respectivement~$\bar{k}$ la fermeture alg\'ebrique de $k$ dans $\C_{\infty}$
 et $\Fbar_q$ celle de $\F_q$ dans $\Cinf$.
 
 Soit $K/k$ une extension finie, on d\'esigne par $[K:k]$ le degr\'e de cette extension,
 par $\OK$ la fermeture int\'egrale de $A$ dans $K$,
 par $\Kbar$ la fermeture alg\'ebrique de~$K$ dans $\Cinf$
 et par $\Kinf$ le compl\'et\'e de $K$ dans $\Cinf$ pour la valuation $\vinf$.

 Si $L$ est une extension galoisienne de $K$, on note $\Gal(L/K)$ son groupe de Galois.

 Si $E$ est un ensemble, on note $\#E$ son cardinal.


\subsection{Places et degr\'es}
\leavevmode\par

 Soient $K$ une extension finie de $k$ et $\mathcal{M}_K$ l'ensemble des places non triviales de $K$.
 Si $v\in\Mcal_K$, on note $v:K\mapsto\Z\cup\{\infty\}$ la valuation associ\'ee
 et $|\cdot|_v:=q^{-v(\cdot)}$ la valeur absolue normalis\'ee associ\'ee.
 On note \'egalement $\mathcal{O}_v:=\big\{x\in K\,\big|\,v(x)\geq0\big\}$\label{notaOv} son anneau de valuation,
 ${\mcar_v:=\big\{x\in K\,\big|\,v(x)>0\big\}}$ son id\'eal maximal,
 $\F_v:=\mathcal{O}_v/\mcar_v$ son corps r\'esiduel
 et ${\deg v:=[\F_v:\F_q]}$\label{notadegv} son degr\'e sur $\F_q$.

 Soit $L$ une extension finie de $K$. Si $w$ est une place de $L$ au-dessus de~$v$ (ce que l'on note $w|v$),
 on pose $f(w/v):=[\F_w:\F_v]$ le degr\'e r\'esiduel de $w$ sur $v$
 et $e(w/v)$ l'indice de ramification de~$w$ sur~$v$,
 c'est-\`a-dire l'unique entier tel que pour tout $\alpha\in K$ on a:
 \[w(\alpha)=e(w/v)\,v(\alpha)\text{.}\]

 On note $K_v$ le compl\'et\'e de $K$ en la place $v$ et $L_w$ celui de $L$ en $w$.
 On appelle degr\'e local de $w$ sur~$v$ le degr\'e de l'extension $L_w/K_v$.


 Soient $F/k$ une extension finie
 et $R$ un \emph{ordre} de $F$, \ie un sous-anneau de $\OF$ ayant $F$ comme corps des fractions.
 Rappelons que $\OF$ est l'\emph{ordre maximal} de~$F$ (pour l'inclusion).

 Pour tout id\'eal non nul $\acar$ de $R$, on d\'efinit:
 \[
  \deg_R\acar:=\log_q\#\!\left(R/\acar\right)\text{,}
 \]
 o\`u $\log_q$ est le logarithme de base $q$. \\
 On \'etend la d\'efinition pr\'ec\'edente \`a tout \'el\'ement non nul $\gamma\in R$ en posant:
 \[
  \deg_R\gamma:=\deg_R(\gamma)\text{,}
 \]
 o\`u $(\gamma):=\gamma R$ est l'id\'eal de $R$ engendr\'e par $\gamma$.
 En particulier, pour tout $a\in A\setminus\{0\}$, on a:
 \[
  \deg_A a=\deg_T a\text{.}
 \]
 On notera \'egalement que si $\pcar$ est un id\'eal premier non-nul de $\OK$ et si~$v_{\pcar}$ est la valuation associ\'ee \`a $\pcar$,
 alors on a $\deg_{\OK}\pcar=\deg v_{\pcar}$.


\subsection{Modules de Drinfeld}\label{subsectionModDrinfeld}
\leavevmode\par

 Soient $L\subset\Cinf$ une extension de $k$ et $\tau$ l'application de Frobenius associ\'ee \`a $\F_q$ (\emph{i.e.} $\tau(x)=x^q$ pour tout $x\in\Cinf$),
 on note $L\{\tau\}$ l'anneau des polyn\^omes de \"Ore sur $L$. Si $P\in L\{\tau\}$,
 on note $\deg_\tau P$ le degr\'e de $P$ en tant que polyn\^ome en $\tau$ et $D P$ le coefficient du mon\^ome en $\tau^0$ de $P$.

%
%
 
 On peut g\'en\'eraliser la th\'eorie classique des modules de Drinfeld
 aux ordres d'extensions CM de $k$ en suivant \cite{HAYES}.
 Soit $F/k$ une extension finie
 de type CM,
 c'est-\`a-dire que $F$ ne poss\`ede qu'une seule place au-dessus de~$\infty$.
 Soit $R$ un ordre de $F$.
 On dit qu'un morphisme de $\F_q$-alg\`ebres $\rho:R\longrightarrow\Cinf\{\tau\}$ est un
 \emph{$R$-module de Drinfeld}
 si pour tout $a\in R$,
 on a $D\rho_a=a$ et s'il existe $a\in R$ v\'erifiant $\rho_a\neq a\,\tau^0$.

 D'apr\`es la proposition $2.3$ de \cite{HAYES}, il existe un entier $r>0$ tel que pour tout $a\in R\setminus\{0\}$,
 on ait: $\deg_\tau \rho_a=r\,\deg_R a\text{.}$
 Cet entier est appel\'e le \emph{rang} de $\rho$\label{notarang}.

 Soient $\rho$ et $\varrho$ deux $R$-modules de Drinfeld.
 Un \emph{morphisme}
 de $\rho$ vers~$\varrho$ est un polyn\^ome $P\in\Cinf\{\tau\}$ v\'erifiant $P\rho_a=\varrho_a P$ pour tout $a\in R$.
 Un morphisme non nul de $\rho$ vers $\varrho$ est appel\'e \emph{isog\'enie}.
 On remarque que si~$\rho$ et $\varrho$ sont isog\`enes, alors ils ont n\'ecessairement m\^eme rang.
 Un \emph{isomorphisme}
 de $\rho$ vers $\varrho$ est une isog\'enie inversible, c'est donc un \'el\'ement de~$\Cinf^*$.

 On note $H_\rho\subset\Cinf$ le corps engendr\'e sur $k$
 par les coefficients des polyn\^omes $\left\{\rho_a\right\}_{a\in R}$ de~$\Cinf\{\tau\}$.
 On dira que $\rho$ est d\'efini
 sur $L\subset\Cinf$ si $H_\rho\subset L$, \ie $\rho:R\longrightarrow L\{\tau\}$.

 Notons $t:=[F:k]$.
 Si~$(e_1,\dots,e_t)$ est une base de $R$ en tant que $A$-module, alors $H_\rho$ est le corps engendr\'e sur $k$
 par les coefficients des polyn\^omes $\rho_T\in\Cinf\{\tau\}$ et $\left\{\rho_{e_i}\right\}_{1\leq i\leq t}$.
 On remarque par ailleurs que si $\rho$ est d\'efini sur $\kbar$,
 alors $H_\rho$ est une extension finie de $k$ contenant~$F$ (car $D\rho_a=a$, pour tout $a\in R$).
 


%


\subsection{Endomorphismes d'un module de Drinfeld}\label{subsectionEndoModDrinfeld}
\leavevmode\par

 Dor\'enavant par module de Drinfeld on entendra module de Drinfeld d\'efini sur $\kbar$.
 
 Soit $\phi$ un $A$-module de Drinfeld de rang $r$.
%
 On note $\End(\phi)\subset\kbar\{\tau\}$ l'ensemble des endomorphismes de $\phi$ sur $\kbar$.
 L'anneau $\End(\phi)$ est un $A$-module de rang $\leq r$ (\cf cor. de la prop. $2.4.$ de \cite{DRINFELDI}) via l'action de $\phi$.
 De plus, $\End(\phi)$ se plonge naturellement dans $\kbar$ via l'application $P\longmapsto D P$.
 Si on note~$R$ l'image de $\End(\phi)$ dans $\kbar$, alors d'apr\`es le corollaire $4.7.15$ de \cite{GOSS},
 $R$ est un ordre d'une extension finie~$F$ de~$k$.
 De plus cette extension est de type CM (voir la preuve du th\'eor\`eme $4.7.17$ de \cite{GOSS})
 de degr\'e \'egal au rang $t$ de~$R$ en tant que $A$-module.
 L'isomorphisme de $A$-modules: 
 \begin{equation}\label{eqplongementEndphikbar}
   \phi':R\stackrel{\sim}{\longrightarrow}\End(\phi)
 \end{equation}
 d\'efinit naturellement une structure de $R$-module de Drinfeld de rang $\frac{r}{t}$
 qui \'etend celle de $\phi$, \ie:
 \begin{equation}\label{eqextscalairesModDrinfeld}
  \forall a\in A, \phi'_a=\phi_a\text{.}
 \end{equation}

 On dit que $\phi$ est \`a
 \emph{multiplications complexes}
 (en abr\'eg\'e~CM) si $\phi'$ est de rang $1$, autrement dit si $\End(\phi)$ est un $A$-module de rang $r$ (car alors~${t=r}$).


\subsection{Points de torsion et id\'eaux}\label{subsectionPtTorsId}
\leavevmode\par

 Soient $F/k$ une extension finie de type CM, $R$ un ordre de $F$,
 $\phi'$\label{notaphiprime} un $R$-module de Drinfeld de rang~$r'$
 et $H_{\phi'}$ le corps des coefficients de $\phi'$ (\cf~\S\ref{subsectionModDrinfeld}).

 Soit $a\in R\setminus\{0\}$. Un \emph{point de $a$-torsion}
 de $\phi'$ est un \'el\'ement $\delta\in\kbar$ tel que $\phi'_a(\delta)=0$.
 On note~$\phi'[a]$\label{notaphiprimea} l'ensemble des points de $a$-torsion.

 De fa\c{c}on g\'en\'erale, si $\acar$ est un id\'eal non nul de $R$,
 un \emph{point de $\acar$-torsion}
 de $\phi'$ est un \'el\'ement $\delta\in\kbar$ tel que $\phi'_a(\delta)=0$ pour tout $a\in\acar$.
 On note~$\phi'[\acar]$\label{notaphiprimeacar}
 l'ensemble des points de $\acar$-torsion, c'est un $R$-module via $\phi'$.
 On note \'egalement $\phi'\!\left[\acar^\infty\right]:=\cup_{i\geq0}\phi'\!\left[\acar^i\right]$.

 Si $L\subseteq\kbar$ est une extension de $k$, on note $\phi'(L)_\tors$\label{notaphiprimeLtors}
 l'ensemble des points de torsion de $\phi'$ dans $L$.

%
%

Soit $\delta\in\kbar$ un point de torsion de $\phi'$.
On appelle \emph{ordre}
de $\delta$ l'id\'eal
$\mathfrak{I}_{\phi'}(\delta):=\left\{a\in R\,\big|\,\phi'_a(\delta)=0\right\}$\label{notaIdelta},
c'est le plus grand id\'eal annulateur de $\delta$ via l'action de $\phi'$.
On remarque imm\'ediatement que $\delta$ est d'ordre premier \`a $\acar$
si et seulement s'il existe un id\'eal non nul $\bcar$ de $R$ premier \`a $\acar$ tel que $\delta\in\phi'[\bcar]$
(en effet, dans ce cas $\bcar\subseteq\Icar_{\phi'}(\delta)$).

Lorsque $R=\OF$, le lemme suivant montre qu'on peut d\'ecomposer un point de torsion suivant un id\'eal premier non nul fix\'e de $\OF$.

\begin{lemm}\label{lemsompttorsion}
 Soient $\mcar$ un id\'eal premier non nul de $\OF$ et $\delta\in\kbar$ un point de torsion pour~$\phi'$.
 Alors, il existe $\delta_1\in H_{\phi'}(\delta)$ un point de torsion pour $\phi'$ d'ordre une puissance de $\mcar$
 et $\delta_2\in H_{\phi'}(\delta)$ un point de torsion pour $\phi'$ d'ordre premier \`a $\mcar$ tels que $\delta=\delta_1+\delta_2$.
\end{lemm}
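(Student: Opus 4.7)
Mon plan est d'utiliser la structure d'anneau de Dedekind de $\OF$ pour décomposer l'ordre $\Icar:=\Icar_{\phi'}(\delta)$ en partie $\mcar$-primaire et partie première à $\mcar$, puis d'appliquer une relation de Bezout.

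Plus précisément, je factoriserais $\Icar = \mcar^n\bcar$ dans $\OF$, où $n\geq 0$ est la valuation $\mcar$-adique de $\Icar$ et $\bcar$ est un idéal premier à $\mcar$. Les idéaux $\mcar^n$ et $\bcar$ étant étrangers, on a $\mcar^n+\bcar=\OF$, donc il existe $a\in\mcar^n$ et $b\in\bcar$ tels que $a+b=1$ dans $\OF$.

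Je poserais alors $\delta_1:=\phi'_b(\delta)$ et $\delta_2:=\phi'_a(\delta)$. Comme $\phi'_a$ et $\phi'_b$ sont des polynômes en $\tau$ à coefficients dans $H_{\phi'}$, ces deux éléments appartiennent à $H_{\phi'}(\delta)$. De plus, puisque $\phi'$ est un morphisme de $\F_q$-algèbres, on a $\phi'_1=\tau^0=\id$, d'où
\[
\delta = \phi'_{a+b}(\delta) = \phi'_a(\delta)+\phi'_b(\delta) = \delta_1+\delta_2\text{.}
\]
Il reste à vérifier les conditions sur les ordres. Pour tout $c\in\mcar^n$, on a $cb\in\mcar^n\bcar=\Icar$, donc $\phi'_c(\delta_1)=\phi'_{cb}(\delta)=0$; ainsi $\mcar^n\subseteq\Icar_{\phi'}(\delta_1)$, ce qui signifie (dans l'anneau de Dedekind $\OF$) que $\Icar_{\phi'}(\delta_1)$ divise $\mcar^n$ et est donc de la forme $\mcar^{n'}$ avec $n'\leq n$: l'ordre de $\delta_1$ est bien une puissance de $\mcar$. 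De même, pour tout $c\in\bcar$, on a $ca\in\bcar\mcar^n=\Icar$ donc $\phi'_c(\delta_2)=\phi'_{ca}(\delta)=0$, d'où $\bcar\subseteq\Icar_{\phi'}(\delta_2)$. D'après la remarque précédant le lemme, ceci entraîne que $\delta_2$ est d'ordre premier à $\mcar$.

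Il n'y a pas d'obstacle réel dans cette preuve: le seul point à ne pas négliger est que l'on utilise de façon essentielle que $R=\OF$ est un anneau de Dedekind pour disposer de la factorisation $\Icar=\mcar^n\bcar$ et de l'égalité $\mcar^n+\bcar=\OF$ — c'est précisément pourquoi l'énoncé suppose que l'on travaille avec l'ordre maximal et non un ordre quelconque.
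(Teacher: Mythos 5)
Votre démonstration est correcte et suit essentiellement la même stratégie que celle du texte: décomposition d'un idéal annulateur en sa partie $\mcar$-primaire et sa partie première à $\mcar$, relation de Bezout $a+b=1$, puis définition de $\delta_1$ et $\delta_2$ comme images de $\delta$ par $\phi'_b$ et $\phi'_a$. La seule différence, purement cosmétique, est que vous partez de l'idéal annulateur $\Icar_{\phi'}(\delta)$ tout entier alors que le texte part de l'idéal principal $(a)$ engendré par un annulateur quelconque; votre version explicite en outre un peu plus soigneusement pourquoi l'ordre de $\delta_1$ est exactement une puissance de $\mcar$.
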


\begin{proof}
 Soit $a\!\in\!\OF\!\setminus\{0\}$ tel que $\phi'_a(\delta)\!=\!0$.
 \'Ecrivons ${(a)=\mcar^g\,\bcar}$ avec $g\geq0$ un entier et~$\bcar$ un id\'eal de $\OF$ premier \`a $\mcar$.
 Alors $\mcar^g$ et $\bcar$ sont premiers entre eux, donc $\mcar^g+\bcar=\OF$. Ainsi, il existe $u\in\bcar$ et $v\in\mcar^g$ tels que $u+v=1$.
 D'o\`u
 \[\delta=\phi'_1(\delta)=\phi'_u(\delta)+\phi'_v(\delta)\text{.}\]
 Posons $\delta_1:=\phi'_u(\delta)$ et $\delta_2:=\phi'_v(\delta)$.
 On va v\'erifier que $\delta_1\in\phi'[\mcar^\infty]$ et $\delta_2\in\phi'[\bcar]$.
 Soit $m$ un \'el\'ement de~$\mcar^g$. On a $m\,u\in\mcar^g\,\bcar=(a)$, il existe donc $c\in\OF$ tel que $m\,u=a\,c$.
 On en d\'eduit
 \[\phi'_m(\delta_1)=\phi'_{m\,u}(\delta)=\phi'_{a\,c}(\delta)=\phi'_c\left(\phi'_a(\delta)\right)=0\text{.}\]
 Ceci \'etant vrai pour tout $m\in\mcar^g$, on a bien $\delta_1\in\phi'[\mcar^g]$ comme annonc\'e.
 Un raisonnement analogue montre que $\delta_2\in\phi'[\bcar]$. Le lemme est donc d\'emontr\'e.
\end{proof}



\subsection{Th\'eorie de Hayes}\label{subsectionThHayes}
\leavevmode\par

 Nous rassemblons dans ce paragraphe  des r\'esultats dus \`a Hayes qui seront essentiels pour la suite
 et nous d\'emontrons une congruence qui est un point clef de la preuve du th\'eor\`eme principal.

 Soient $F/k$ une extension finie de type CM et $R$ un ordre de $F$.
 On note $\infty'$ l'unique place de $F$ au-dessus de $\infty$ et $d:=\deg\infty'$\label{notadinfprime} son degr\'e.
 Soient~$\FFinf$\label{notaFFinf} le corps r\'esiduel de $\infty'$ (qui est donc isomorphe \`a $\F_{q^d}$),
 $\vinfprime$\label{notavinfprime} l'unique extension de $\vinf$ \`a $F$ et $\Finf$ le compl\'et\'e de~$F$ pour~$\vinfprime$.

 Soit $\rho$ un $R$-module de Drinfeld et $\acar$ un id\'eal non nul de $R$.
 On note $\rho_\acar$ le g\'en\'erateur unitaire de l'id\'eal \`a gauche de $\kbar\{\tau\}$ engendr\'e par la famille $\left\{\rho_a\right\}_{a\in\acar}$.
 D'apr\`es le \S$3$ de \cite{HAYES},
 il existe alors un unique $R$-module de Drinfeld $\varrho$ tel que pour tout $a\in A$, on ait:
 \[\rho_\acar\,\rho_a=\varrho_a\,\rho_\acar\text{.}\]
 On notera $\acar*\rho$\label{notaacaretoilerho} le $R$-module $\varrho$, en suivant \cite{HAYES}.


 De plus, si on note $\End(\acar)$\label{notaEndacar}
 l'ensemble des \'el\'ements $a\in\OF$ tels que $a\acar\subseteq\acar$,
 alors, d'apr\`es la proposition $3.2$ de \cite{HAYES}, $\varrho$ est en fait un $\End(\acar)$-module de Drinfeld.

 Nous utiliserons exclusivement le cas particulier suivant pour \'etendre \`a l'ordre maximal l'anneau des scalaires de nos modules de Drinfeld CM.
 Soit $\Ccar:=\left\{a\in\OF\,\big|\,a\OF\subset R\right\}$\label{notaCcar}
 le \emph{conducteur} de $R$ (\ie le plus grand id\'eal de $\OF$ contenu dans $R$).
 On a alors $\End(\Ccar)=\OF$ et d'apr\`es ce qui pr\'ec\`ede, on obtient:

\begin{prop}\label{propmodetenduconducteur}
 Soit $\rho$ un $R$-module de Drinfeld. Notons $\Ccar$ le con\-ducteur de $R$ et $\varphi=\Ccar*\rho$.
 Alors $\rho$ est isog\`ene \`a $\varphi$ via $\rho_\Ccar$
 et on a $\End(\varphi)=\OF$.
\end{prop}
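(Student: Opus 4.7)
The plan is to separate the proposition into its two assertions---the isogeny $\rho\to\varphi$ and the identification $\End(\varphi)=\OF$---and derive each directly from the Hayes machinery recalled just above, the only real verification being $\End(\Ccar)=\OF$.

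The isogeny claim is essentially tautological. By construction $\rho_\Ccar$ is the unitary generator of a nonzero left ideal of $\kbar\{\tau\}$, hence nonzero, and the defining relation $\rho_\Ccar\,\rho_a=\varphi_a\,\rho_\Ccar$ (valid for every $a\in R$ by the uniqueness property in the definition of $\Ccar*\rho$) is precisely the statement that $\rho_\Ccar$ is a nonzero morphism of $R$-modules de Drinfeld from $\rho$ to $\varphi$, i.e. an isog\'enie.

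For the identification $\End(\varphi)=\OF$ I would invoke Proposition $3.2$ of \cite{HAYES} with $\acar=\Ccar$: this gives that $\varphi$ is automatically an $\End(\Ccar)$-module de Drinfeld, so the claim reduces to $\End(\Ccar)=\OF$. The latter is a purely algebraic statement about the conductor: $\Ccar$ is by its very definition the largest $\OF$-ideal contained in $R$, and in particular an $\OF$-ideal, since for $a\in\Ccar$ and $b\in\OF$ one has $(ba)\OF = a(b\OF) \subseteq a\OF \subseteq R$, whence $ba\in\Ccar$. This yields $\OF\subseteq\End(\Ccar)$, and the reverse inclusion is built into the very definition of $\End(\Ccar)$. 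Hayes's proposition then furnishes the embedding $\OF\hookrightarrow\End(\varphi)$ via $a\mapsto\varphi_a$.

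The remaining inclusion $\End(\varphi)\subseteq\OF$ relies on the isogeny $\rho_\Ccar$ exhibited in the second paragraph: isogenous modules de Drinfeld share their endomorphism $k$-algebra, so $\End(\varphi)\otimes_A k\cong\End(\rho)\otimes_A k$, and since $R\subseteq\End(\rho)$ has corps des fractions $F$, the standard facts on CM orders recalled in \S\ref{subsectionEndoModDrinfeld} place $\End(\varphi)$ inside an $A$-ordre of $F$; any such ordre containing the ordre maximal $\OF$ must equal $\OF$. The only step that is not purely formal is the identification of the conductor as the largest $\OF$-ideal in $R$; once this is granted, the rest of the argument is a direct application of Hayes's formalism, and I do not anticipate any genuine obstacle.
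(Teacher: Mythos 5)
Your proof is correct and follows the same route as the paper, which gives no separate argument but deduces the proposition directly from the definition of $\Ccar*\rho$, from la proposition $3.2$ de \cite{HAYES}, and from the observation $\End(\Ccar)=\OF$. You merely make explicit the two easy verifications the paper leaves implicit (that $\Ccar$ is an $\OF$-ideal, and that $\End(\varphi)$, being an ordre de $F$ containing $\OF$, equals $\OF$), which is harmless.
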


 Soit $\pi\in F$\label{notapi} une uniformisante de $\vinfprime$ (\ie $\vinfprime(\pi)=1$).
 On a alors $\Finf=\FFinf\!\left(\!\left(\pi\right)\!\right)$.
 Tout \'el\'ement $\alpha\in\Finf^*$ peut donc s'\'ecrire:
 \[\alpha=\sum_{i\geq j}c_i\,\pi^i\]
 o\`u $j:=\vinfprime(\alpha)$ et $c_i\in\FFinf$ pour tout entier $i\geq j$.
 En posant ${\sgn(\alpha):=c_j}$\label{notasgn} on d\'efinit alors, suivant la terminologie de Hayes,
 une \emph{fonction signe} sur~$\Finf$
 (\ie un morphisme de groupe ${\Finf^*\longrightarrow\FFinf^*}$ dont la restriction \`a~$\FFinf^*$ est l'identit\'e).

 Soit maintenant $\psi$ un $\OF$-module de Drinfeld de rang $1$. Pour tout $a\in\OF$,
 on note $\mu_\psi(a)\in\kbar$\label{notamupsia} le coefficient dominant de $\psi_a$,
 c'est-\`a-dire le coefficient du mon\^ome de plus haut degr\'e de $\psi_a\in\kbar\{\tau\}$.
 Pour tous $a\in\OF$ et $b\in\OF$ on a:
 \begin{equation}\label{eqrelmupsiab}
  \mu_\psi(a b)=\mu_\psi(a)\,\mu_\psi(b)^{q^{\degOF a}}\text{.}
 \end{equation}
 
 
 D'apr\`es le \S$6$ de \cite{HAYES}, l'application $x\longmapsto\mu_\psi(x)$ s'\'etend en une application
 \[
  \mu_\psi:
  \begin{array}[t]{ccl}
   F_{\infty'} & \longrightarrow & \C_\infty \\
   x & \longmapsto & \mu_\psi(x)
  \end{array}\text{.}
 \]

 \begin{defi}
  Soit $\psi$\label{notapsi} un $\OF$-module de Drinfeld de rang $1$.
  On dit que $\psi$ est de \emph{signe normalis\'e}
  s'il existe $\sigma\in\Gal(\FFinf/\F_q)$ tel que:
  \[\mu_\psi=\sigma\circ\sgn\text{.}\]
 \end{defi}

 En particulier, si $\psi$ est de signe normalis\'e, alors pour tout $a\in\OF\setminus\{0\}$, on~a:
 \begin{equation}\label{eqmupsidansFqdstar}
  \mu_\psi(a)\in\FFinf^*\simeq\F_{q^d}^*\text{,}
 \end{equation}
 car $F$ est de type CM.

 D'apr\`es le th\'eor\`eme $12.3$ de \cite{HAYESG},
 tout $\OF$-module de Drinfeld $\varphi$ de rang $1$ est isomorphe \`a un module de Drinfeld de signe normalis\'e.
 Plus pr\'ecis\'ement:
 
\begin{prop}\label{propracinepolysgnnormalise}
 Soient $\varphi$ un $\OF$-module de Drinfeld de rang $1$ et $z\in\kbar$ une racine du polyn\^ome:
 \[X^{q^d-1}-\mu_\varphi\!\left(\pi^{-1}\right)\text{.}\]
 Alors le $\OF$-module de Drinfeld $\psi:=z\,\varphi\,z^{-1}$ est de signe normalis\'e.
\end{prop}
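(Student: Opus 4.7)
The plan is to verify the condition $\mu_\psi = \sigma \circ \sgn$ on $F_{\infty'}^*$ in three stages: first specialize to $\pi^{-1}$, next handle the units $\Ocal_{\infty'}^*$, and finally combine via twisted multiplicativity.

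Step 1. For $a \in \OF$, write $\varphi_a = \sum_i c_i \tau^i$ with $c_n = \mu_\varphi(a)$ and $n = \degOF(a)$. Using $\tau^i z^{-1} = z^{-q^i}\tau^i$, the conjugate $\psi_a = z\,\varphi_a\,z^{-1}$ has leading coefficient $\mu_\psi(a) = \mu_\varphi(a)\,z^{1-q^n}$; by the extension of $\mu_\psi$ and $\mu_\varphi$ to $F_{\infty'}^*$ (\S6 of \cite{HAYES}) the identity $\mu_\psi(x) = \mu_\varphi(x)\,z^{1-q^{-d\,\vinfprime(x)}}$ remains valid for $x \in F_{\infty'}^*$. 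Setting $x = \pi^{-1}$ (so that $-d\,\vinfprime(x) = d$):
\[\mu_\psi(\pi^{-1}) = \mu_\varphi(\pi^{-1})\,z^{1-q^d} = \mu_\varphi(\pi^{-1})/z^{q^d-1} = 1\]
by the hypothesis $z^{q^d-1} = \mu_\varphi(\pi^{-1})$. Iterating \eqref{eqrelmupsiab} then gives $\mu_\psi(\pi^m) = 1$ for every $m \in \Z$ (first for $m \leq 0$ by induction, then for $m > 0$ via the relation on $\pi\cdot\pi^{-1} = 1$).

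Step 2. For $u \in \Ocal_{\infty'}^*$ one has $-d\,\vinfprime(u) = 0$, hence $\mu_\psi(u) = \mu_\varphi(u)$. Hayes' extension makes $\mu_\varphi|_{\Ocal_{\infty'}^*}$ a continuous group homomorphism into $\Cinf^*$ (the twist is trivial in degree zero). The subgroup of principal units $1 + \pi\Ocal_{\infty'}$ is a pro-$p$-group ($p = \car(\F_q)$), while $\Cinf^*$ is $p$-torsion-free in characteristic $p$ (since $x^p = 1$ forces $x = 1$); therefore $\mu_\varphi$ is trivial on $1 + \pi\Ocal_{\infty'}$ and factors through $\Ocal_{\infty'}^*/(1+\pi\Ocal_{\infty'}) \simeq \FFinf^*$. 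Since $\varphi_c = c\,\tau^0$ for $c \in \FFinf^* \subset \OF^*$ (because $\degOF(c) = 0$ and $D\varphi_c = c$), the factored map is the identity on $\FFinf^*$; hence $\mu_\varphi|_{\Ocal_{\infty'}^*} = \sgn$, and so $\mu_\psi|_{\Ocal_{\infty'}^*} = \sgn$.

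Step 3. For arbitrary $x \in F_{\infty'}^*$, set $u := x\,\pi^{-\vinfprime(x)} \in \Ocal_{\infty'}^*$, so $x = u\,\pi^{\vinfprime(x)}$. The twisted multiplicativity \eqref{eqrelmupsiab} (with $\degOF(u) = 0$) gives
\[\mu_\psi(x) = \mu_\psi(u)\,\mu_\psi(\pi^{\vinfprime(x)})^{q^0} = \sgn(u) \cdot 1 = \sgn(x),\]
showing $\mu_\psi = \id_{\FFinf}\circ\sgn$, i.e.\ $\psi$ is of normalized sign (with $\sigma = \id$). The main difficulty is Step 2, which relies on Hayes' continuous multiplicative extension of $\mu_\varphi$ to $F_{\infty'}^*$ and on the careful application of the pro-$p$ argument; the remaining steps are straightforward algebra.
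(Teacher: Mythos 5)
The paper does not actually prove this proposition: it simply cites Theorem 12.3 of Hayes, where the statement is established from the explicit (analytic/arithmetic) construction of rank-one Drinfeld modules. You attempt a self-contained proof instead. Steps 1 and 3 are fine: the computation $\mu_\psi(a)=\mu_\varphi(a)\,z^{1-q^{\degOF a}}$, its extension to $F_{\infty'}^*$ (the right-hand side is again a continuous twisted homomorphism agreeing with $\mu_\psi$ on $\OF\setminus\{0\}$, so uniqueness of Hayes' extension applies), and the reduction to units and powers of $\pi$ are all correct. The entire content of the theorem, however, sits in your Step 2, and the argument there fails.

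First, the pro-$p$ argument is invalid: a continuous homomorphism from a pro-$p$ group into a $p$-torsion-free topological group need not be trivial, because pro-$p$ groups need not be torsion. In characteristic $p$ one has $(1+x)^{p^n}=1+x^{p^n}\neq1$ for $x\neq0$, so $1+\pi\Ocal_{\infty'}$ is a torsion-free pro-$p$ group, and the inclusion $1+\pi\Ocal_{\infty'}\hookrightarrow\Cinf^*$ is itself a nontrivial continuous homomorphism into a $p$-torsion-free group; your reasoning would equally show that this inclusion is trivial, which is absurd. The triviality of $\mu_\varphi$ on the $1$-units is true, but it is a genuine theorem of Hayes, obtained from the lattice description of $\mu_\varphi$, not from soft topology. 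Second, the inclusion $\FFinf^*\subset\OF^*$ that you invoke is false in general: $\FFinf\simeq\F_{q^d}$ is the residue field at $\infty'$, and only the constant field $\Ftilde=\F_{q^{\cardFtilde}}$ of $F$, with $\cardFtilde$ dividing $d$ possibly strictly, embeds in $\OF$. Hence you cannot write $\varphi_c=c\,\tau^0$ for $c\in\FFinf^*$, and your conclusion that $\sigma=\id$ is in fact stronger than the statement: the definition allows a nontrivial $\sigma\in\Gal(\FFinf/\F_q)$ precisely because $\mu_\psi$ restricted to $\FFinf^*$ is in general only a power of the Frobenius. The two missing facts --- triviality on the $1$-units and the fact that $\mu_\varphi|_{\FFinf^*}$ is a field automorphism of $\FFinf$ over $\F_q$ --- are exactly what Hayes' Theorem 12.3 supplies, so as written your proof is incomplete where it matters most.
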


\begin{proof}
 Voir le th\'eor\`eme $12.3$ de \cite{HAYESG}.
\end{proof}

 Soient $\psi$ un $\OF$-module de Drinfeld de signe normalis\'e
 et $H_\psi$ son corps des coefficients (\cf \S\ref{subsectionModDrinfeld}).
 Comme $\mu_\psi(a)\in\FFinf$ pour tout $a\in\OF$,
 d'apr\`es la proposition~$11.3$ de \cite{HAYESG}, $\psi$ est d\'efini sur $\Ocal_{H_\psi}$.
 
 Par ailleurs, d'apr\`es le \S$14$ de \cite{HAYESG}, le corps $H_\psi$ est ind\'ependant du $\OF$-module de Drinfeld de signe normalis\'e,
 on le note $\HFplus$\label{notaHFplus}.
 De plus, l'extension $\HFplus/F$ est ab\'elienne, finie (\cf prop. $14.1$ de \cite{HAYESG})
 et non ramifi\'ee au-dessus de toutes les places finies de $F$ (\cf Th. $14.4$ de \cite{HAYESG}).
 
 On peut donc associer \`a tout id\'eal premier non nul $\mcar$\label{notamcar}
 de $\OF$ son \emph{symbole d'Artin},
 c'est-\`a-dire l'unique automorphisme $\sigma_\mcar\in\Gal(\HFplus/F)$\label{notasigmamcar} v\'erifiant:
 \begin{equation*}
  \forall\gamma\in\OcalHFplus,\quad\sigma_\mcar(\gamma)\equiv\gamma^{q^{\degOF\mcar}}\mod\ncar
 \end{equation*}
 o\`u $\ncar$ est un id\'eal premier de $\OcalHFplus$ au-dessus de $\mcar$.
 Soient $\acar$ un id\'eal non nul de $\OF$
 et $\acar=\mcar_1^{e_1}\cdots\mcar_j^{e_j}$ sa d\'ecomposition en produit d'id\'eaux premiers de $\OF$.
 On \'etend la d\'efinition du symbole d'Artin \`a tout id\'eal non nul $\acar$ de $\OF$ en posant:
 \begin{equation*}
  \sigma_\acar=\sigma_{\mcar_1}^{e_1}\cdots\sigma_{\mcar_j}^{e_j}\text{.}
 \end{equation*}
 
 Soit $\HFplus(\psi[\acar])$\label{notaHFpluspsiacar}
 le corps engendr\'e sur $\HFplus$ par les points de $\acar$-torsion de~$\psi$.
 Alors, d'apr\`es la proposition $7.5.4$ de \cite{GOSS}, l'extension $\HFplus(\psi[\acar])/F$ est ab\'elienne
 et non ramifi\'ee au-dessus de tout id\'eal premier de~$\OF$ qui ne divise pas $\acar$.
 Dans le cas des id\'eaux premier de $\OF$ nous avons le r\'esultat compl\'ementaire suivant (\cf prop. $7.5.18$ de \cite{GOSS}):
 
 \begin{prop}\label{propextpttorsiontotalrami}
  Soient $\mcar$ un id\'eal premier non nul de $\OF$ et ${i>0}$ un entier.
  Alors, l'extension $\HFplus(\psi[\mcar^i])/\HFplus$ est totalement ramifi\'ee en tout premier de $\OcalHFplus$ au-dessus de $\mcar$.
 \end{prop}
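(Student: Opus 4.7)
Pour d\'emontrer cette proposition, nous fixons un id\'eal premier $\ncar$ de $\OcalHFplus$ au-dessus de $\mcar$ et \'etablissons la ramification totale de $\HFplus(\psi[\mcar^i])/\HFplus$ en $\ncar$. Le plan consiste \`a traiter d'abord le cas $i=1$ en montrant que $\psi_\mcar(X)/X$ est un polyn\^ome d'Eisenstein en $\ncar$, puis \`a proc\'eder par r\'ecurrence pour $i\geq 2$. Puisque $\psi$ est de signe normalis\'e, il est d\'efini sur $\OcalHFplus$; comme $\HFplus/F$ est non ramifi\'ee au-dessus de toute place finie, on a $e(\ncar/\mcar)=1$, et la r\'eduction $\bar\psi$ modulo $\ncar$ est un $\OF$-module de Drinfeld de rang $1$ sur un corps de caract\'eristique $\mcar$.

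\emph{Cas $i=1$.} Le polyn\^ome $\psi_\mcar$ \'etant unitaire de $\tau$-degr\'e $\degOF\mcar$, on \'ecrit $\psi_\mcar(X)=X^{q^{\degOF\mcar}}+\sum_{j=0}^{\degOF\mcar-1}c_j\,X^{q^j}$ avec $c_j\in\OcalHFplus$. Comme $\bar\psi$ est de rang~$1$ en caract\'eristique~$\mcar$, sa hauteur vaut~$1$, ce qui entra\^ine $\bar\psi[\mcar]=0$ et force $\bar\psi_\mcar$ \`a \^etre purement ins\'eparable en $X$: les $c_j$ sont dans $\ncar$ pour $j<\degOF\mcar$. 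Il reste la condition $c_0\notin\ncar^2$. Choisissons $a\in\mcar\setminus\mcar^2$, d'o\`u $v_\ncar(a)=1$, et \'ecrivons $(a)=\mcar\bcar$ avec $\bcar$ premier \`a $\mcar$. La th\'eorie de Hayes fournit la factorisation $\psi_a=\mu_\psi(a)\cdot(\mcar*\psi)_\bcar\cdot\psi_\mcar$; en identifiant les coefficients de $\tau^0$ (qui se multiplient comme des scalaires), on obtient $a=\mu_\psi(a)\cdot d_0\cdot c_0$, o\`u $d_0$ d\'esigne le coefficient de $\tau^0$ de $(\mcar*\psi)_\bcar$. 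Comme $\mu_\psi(a)\in\FFinf^*$ est alg\'ebrique sur $\F_q$ (donc une $\ncar$-unit\'e) et comme $\bcar$ est copremier \`a la caract\'eristique $\mcar$, la $\bcar$-torsion de $\mcar*\psi$ demeure \'etale modulo $\ncar$, ce qui rend $d_0$ inversible modulo $\ncar$. On en d\'eduit $v_\ncar(c_0)=v_\ncar(a)=1$, achevant la preuve que $\psi_\mcar(X)/X$ est d'Eisenstein en $\ncar$.

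\emph{R\'ecurrence et obstacle principal.} Pour $i\geq 2$, supposons $\HFplus(\psi[\mcar^{i-1}])/\HFplus$ totalement ramifi\'ee en $\ncar$, d'unique premier $\ncar_{i-1}$ au-dessus, et tout g\'en\'erateur $\lambda_{i-1}$ du $\OF/\mcar^{i-1}$-module $\psi[\mcar^{i-1}]$ une uniformisante en $\ncar_{i-1}$ (ce second point provient par r\'ecurrence de l'\'equation Eisenstein dont $\lambda_{i-1}$ est racine). Alors tout $\lambda_i$ v\'erifiant $\psi_\mcar(\lambda_i)=\lambda_{i-1}$ est racine du polyn\^ome $\psi_\mcar(X)-\lambda_{i-1}$, qui est Eisenstein en $\ncar_{i-1}$: les coefficients $c_j$ demeurent dans $\ncar\subset\ncar_{i-1}$, et le terme constant $-\lambda_{i-1}$ est d'ordre~$1$. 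Ceci donne la ramification totale de $\HFplus(\psi[\mcar^i])/\HFplus(\psi[\mcar^{i-1}])$ en $\ncar_{i-1}$, puis par transitivit\'e celle de $\HFplus(\psi[\mcar^i])/\HFplus$ en $\ncar$. L'obstacle principal se situe dans l'\'etape $c_0\notin\ncar^2$ du cas $i=1$: il faut manipuler avec pr\'ecision les factorisations d'Ore dans la th\'eorie de Hayes et exploiter finement le fait que le facteur $(\mcar*\psi)_\bcar$ a un terme en $\tau^0$ inversible $\ncar$-adiquement, ce qui d\'ecoule \emph{in fine} de l'\'etalet\'e des torsions hors de la caract\'eristique $\mcar$.
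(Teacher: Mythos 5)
Votre preuve est correcte, mais il faut signaler que l'article ne d\'emontre pas cette proposition: il renvoie simplement \`a la proposition $7.5.18$ de Goss. Votre argument est en substance une reconstruction de la d\'emonstration standard de Hayes--Goss (polyn\^ome d'Eisenstein pour $\psi_\mcar(X)/X$, puis r\'ecurrence via $\psi_\mcar(X)-\lambda_{i-1}$), et toutes les \'etapes tiennent: la nullit\'e des $c_j$ modulo $\ncar$ pour $j<\degOF\mcar$ est exactement la congruence $\psi_\mcar\equiv\tau^{\degOF\mcar}\bmod\ncar$ d\'ej\`a cit\'ee dans le texte (corollaire $5.9$ de Hayes), et la factorisation $\psi_a=\mu_\psi(a)\,(\mcar*\psi)_\bcar\,\psi_\mcar$ est la bonne. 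Deux remarques mineures. D'une part, l'\'etape \og $c_0\notin\ncar^2$ \fg{} n'a pas besoin de l'\'etalet\'e de la $\bcar$-torsion: puisque $\mcar*\psi$ est encore de signe normalis\'e, donc \`a coefficients dans $\OcalHFplus$, on a $v_\ncar(d_0)\geq0$, et l'identit\'e $1=v_\ncar(a)=v_\ncar(d_0)+v_\ncar(c_0)$ jointe \`a $v_\ncar(c_0)\geq1$ force directement $v_\ncar(c_0)=1$ et $v_\ncar(d_0)=0$. D'autre part, vous laissez implicite le passage de \og $\lambda$ engendre une sous-extension totalement ramifi\'ee de degr\'e $q^{\degOF\mcar}-1$ \fg{} (resp. $q^{\degOF\mcar}$ \`a chaque \'etape de la r\'ecurrence) \`a la ramification totale de l'extension enti\`ere: il faut invoquer l'isomorphisme $\Gal\bigl(\HFplus(\psi[\mcar^i])/\HFplus\bigr)\simeq(\OF/\mcar^i)^\times$ (relation (\ref{eqisogrpGaloisquotientcroix}) du texte) pour constater que le degr\'e du polyn\^ome d'Eisenstein co\"incide avec celui de l'extension, de sorte que $\HFplus(\psi[\mcar^i])$ est bien engendr\'e par une seule racine et que l'indice de ramification atteint le degr\'e total. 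Ce point ajout\'e, la d\'emonstration est compl\`ete.
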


 Soient $\lambda\in\kbar$ un point de $\acar$-torsion, $\bcar$ un id\'eal non nul de $\OF$ premier \`a $\acar$
 et $\sigma_\bcar\in\Gal\!\left(\HFplus(\psi[\acar])/F\right)$ le symbole d'Artin de $\bcar$.
 Alors, toujours d'apr\`es la proposition $7.5.4$ de \cite{GOSS}, on a:
 \begin{equation}\label{eqsymboleArtinidealpttors}
  \sigma_\bcar(\lambda)=\psi_\bcar(\lambda)\text{.}
 \end{equation}
 De plus, d'apr\`es le corollaire $7.5.6$ de \cite{GOSS}, on a:
 \begin{equation}\label{eqisogrpGaloisquotientcroix}
  \Gal\!\left(\HFplus(\psi[\acar])/\HFplus\right)\simeq\left(\OF/\acar\right)^\times\text{.}
 \end{equation} 

 Soient $\mcar$ un id\'eal premier non nul de $\OF$ et $\ncar$
 un id\'eal premier non nul de $\OcalHFplus$ au-dessus de $\mcar$ (\ie $\mcar=\ncar\cap\OF$). 
 D'apr\`es le corollaire $5.9$ de \cite{HAYESG}, pour tout entier $i\geq1$ on a:
 \begin{equation}\label{eqcongruencepsiidealpremier}
  \psi_{\mcar^i}\equiv\tau^{i\,\degOF\mcar}\mod\ncar
 \end{equation}
 (rappelons que $\psi_{\mcar^i}$ d\'esigne le g\'en\'erateur unitaire de l'id\'eal \`a gauche de~$\kbar\{\tau\}$
 engendr\'e par la famille $\left\{\psi_a\right\}_{a\in\mcar^i}$).

 Nous voudrions maintenant pouvoir nous servir de cette congruence
 pour obtenir des minorations utiles par la suite.
 Le probl\`eme avec cette congruence est que $\psi_{\mcar^i}$ n'est pas, en g\'en\'eral, un endomorphisme de $\psi$.
 Pour que $\psi_{\mcar^i}$ soit un endomorphisme de $\psi$ il faut et il suffit qu'il existe $m\in\OF$ tel que $\psi_m=\psi_{\mcar^i}$.
 Il suffit donc que l'id\'eal $\mcar^i$ soit principal
 et que l'un de ses g\'en\'erateurs $m\in\OF$ v\'erifie $\mu_\psi(m)=1$
 (puisque si $\mcar^i=(m)$, on a $\psi_m=\mu_\psi(m)\,\psi_{\mcar^i}$).

 L'id\'ee est donc de consid\'erer une puissance suffisamment grande de l'id\'eal $\mcar$.
 Pour cela,
 On note~$\nbclass$\label{notahnbclasses}
 le \emph{nombre de classes}
 de $\OF$ et $d:=\deg\infty'$ le degr\'e de la place $\infty'$.
 Alors $\mcar^\nbclass$ est un id\'eal principal.
 On fixe $m\in\OF$\label{notam} un g\'en\'erateur de $\mcar^\nbclass$. Pour tout entier $i>0$,
 on a d'apr\`es (\ref{eqrelmupsiab}) p. \pageref{eqrelmupsiab}:
 \[\mu_\psi(m^i)=\mu_\psi(m)^{1+q^{\degOF m}+\cdots+q^{(i-1)\degOF m}}\text{.}\]
 Or $\mu_\psi(m)\in\F_{q^d}^{\times}$ car $\psi$ est de signe normalis\'e (\cf (\ref{eqmupsidansFqdstar}) p. \pageref{eqmupsidansFqdstar}).
 Donc si $j$ et $j'$ repr\'esentent respectivement le quotient et le reste de la division euclidienne de $i$ par~$d$, alors:
 \[\mu_\psi(m)^{q^{i\degOF m}}
  =\mu_\psi(m)^{q^{(j d+j')\degOF m}}=\mu_\psi(m)^{q^{j'\degOF m}}\text{.}\]
 Ainsi,
 \[\mu_\psi(m^{i\,d})=\left(\mu_\psi(m)^{1+q^{\degOF m}+\cdots+q^{(d-1)\degOF m}}\right)^i\text{.}\]
 En prenant $i=q^d-1$, on obtient:
 \[\mu_\psi(m^{d(q^d-1)})=\left(\mu_\psi(m)^{1+q^{\degOF m}+\cdots+q^{(d-1)\degOF m}}\right)^{q^d-1}=1\text{.}\]

 Nous consid\'ererons plus loin la situation o\`u
 $K/\HFplus$ et $L/K$ sont deux extensions finies, $\pcar$ est un id\'eal premier de~$\OK$ au-dessus de~$\mcar$
 et $\qcar$ un id\'eal premier de $\OL$ au-dessus de~$\pcar$.
 Posons 
 \[\nu:=d(q^d-1)\degOL\qcar/\degOF\mcar\]
 (c'est un entier car $\OL/\qcar$ est une extension finie du corps fini $\OF/\mcar$).
 Alors, d'apr\`es la congruence {(\ref{eqcongruencepsiidealpremier}) p.~\pageref{eqcongruencepsiidealpremier}}
 et la discussion ci-dessus, on a:
 \[\psi_{m^\nu}=\psi_{\mcar^{\nbclass\nu}}\equiv\tau^{\nbclass\,d(q^d-1)\degOL\qcar}\mod\pcar\text{.}\]
 Cela montre:

 \begin{prop}\label{propCongModDNormalise}
  Soient $F/k$ une extension CM finie,
  $\infty'$ l'unique place de $F$ au-dessus $\infty$,
  $d$ le degr\'e sur $\F_q$ de la place $\infty'$,
  $\nbclass$ le nombre de classes de $\OF$
  et $\psi$ un $\OF$-module de Drinfeld de signe normalis\'e.
  Soient~$\mcar$ un id\'eal premier non nul de $\OF$,
  $m\in\OF$ un g\'en\'erateur de l'id\'eal principal~$\mcar^\nbclass$,
  $K/\HFplus$ et $L/K$ deux extensions finies,
  $\pcar$ un id\'eal premier de $\OK$ au-dessus de~$\mcar$
  et $\qcar$ un id\'eal premier de $\OL$ au-dessus de~$\pcar$.
  On pose:
  \[\nu:=d\,(q^d-1)\degOL\qcar/\degOF\mcar\text{.}\]
  Alors on a:
  \begin{equation}\label{eq:propCMDN}
   \psi_{m^\nu}=\psi_{\mcar^{\nbclass\,\nu}}\equiv\tau^{\nbclass\,d\,(q^d-1)\degOL\qcar}\mod\pcar\text{.}
  \end{equation}
 \end{prop}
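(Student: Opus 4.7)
L'id\'ee ma\^{\i}tresse est de partir de la congruence (\ref{eqcongruencepsiidealpremier}) appliqu\'ee \`a une puissance appropri\'ee de $\mcar$, puis d'identifier $\psi_{\mcar^{\nbclass\,\nu}}$ au polyn\^ome $\psi_{m^\nu}$ en v\'erifiant que leur coefficient dominant commun $\mu_\psi(m^\nu)$ vaut~$1$.

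Commen\c{c}ons par la congruence. L'hypoth\`ese $K\supseteq\HFplus$ garantit que $\pcar\cap\OcalHFplus$ est un premier de~$\OcalHFplus$ situ\'e au-dessus de $\mcar$; la congruence (\ref{eqcongruencepsiidealpremier}) appliqu\'ee avec $i=\nbclass\,\nu$ donne alors, modulo cet id\'eal, et donc \emph{a fortiori} modulo $\pcar$,
\[\psi_{\mcar^{\nbclass\,\nu}}\equiv\tau^{\nbclass\,\nu\,\degOF\mcar}\mod\pcar\text{,}\]
o\`u l'exposant de droite vaut bien $\nbclass\,d\,(q^d-1)\,\degOL\qcar$ par d\'efinition de $\nu$.

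Il reste alors \`a \'etablir l'\'egalit\'e $\psi_{m^\nu}=\psi_{\mcar^{\nbclass\,\nu}}$. L'\'egalit\'e d'id\'eaux $(m^\nu)=\mcar^{\nbclass\,\nu}$ force les polyn\^omes de \"Ore correspondants \`a engendrer le m\^eme id\'eal \`a gauche de $\kbar\{\tau\}$ et \`a ne diff\'erer que par leur coefficient dominant, soit $\psi_{m^\nu}=\mu_\psi(m^\nu)\,\psi_{\mcar^{\nbclass\,\nu}}$; tout se ram\`ene donc \`a v\'erifier que $\mu_\psi(m^\nu)=1$, et c'est pr\'ecis\'ement ici que l'hypoth\`ese de signe normalis\'e intervient de fa\c{c}on essentielle.

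Pour cette derni\`ere v\'erification, j'exploiterais (\ref{eqmupsidansFqdstar}) pour avoir $\mu_\psi(m)\in\F_{q^d}^\times$, puis la formule multiplicative (\ref{eqrelmupsiab}) pour en d\'eduire $\mu_\psi\!\left(m^{d(q^d-1)}\right)=1$, comme d\'ej\`a d\'etaill\'e dans la discussion pr\'ec\'edant l'\'enonc\'e. On conclut en observant que $\nu$ est un multiple entier de $d\,(q^d-1)$: en effet, $\OL/\qcar$ \'etant une extension du corps fini $\OF/\mcar$, le rapport $\degOL\qcar/\degOF\mcar$ est l'entier $[\OL/\qcar:\OF/\mcar]$. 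L'obstacle principal, si obstacle il y a, est uniquement de nature comptable~: bien suivre les exposants et choisir le bon premier interm\'ediaire; aucune difficult\'e conceptuelle r\'eelle n'appara\^{\i}t, la congruence-clef (\ref{eqcongruencepsiidealpremier}) et l'annulation de $\mu_\psi\!\left(m^{d(q^d-1)}\right)$ ayant d\'ej\`a \'et\'e \'etablies.
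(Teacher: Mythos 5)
Votre démonstration est correcte et suit essentiellement la même voie que celle du texte (qui établit la proposition par la discussion qui la précède immédiatement) : congruence (\ref{eqcongruencepsiidealpremier}) appliquée à $\mcar^{\nbclass\nu}$, identification $\psi_{m^\nu}=\mu_\psi(m^\nu)\,\psi_{\mcar^{\nbclass\nu}}$, puis $\mu_\psi(m^\nu)=1$ via (\ref{eqrelmupsiab}) et (\ref{eqmupsidansFqdstar}) parce que $\nu$ est un multiple entier de $d\,(q^d-1)$. Seule retouche de forme : parler de l'égalité $\mu_\psi\!\left(m^{d(q^d-1)}\right)=1$ plutôt que de son \emph{annulation}.
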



\subsection{Hauteurs}
\leavevmode\par

 Soient $\alpha\in\kbar$ et $K$ une extension finie de $k$ contenant $\alpha$.
 La \emph{hauteur logarithmique et absolue de Weil}
 de $\alpha$, not\'ee $\hw(\alpha)$\label{notahw}, est d\'efinie par:
 \[\hw(\alpha):=\frac{1}{[K:k]}\sum_{v\in\Mcal_K}\deg v\,\log_q\max\{1,|\alpha|_v\}\text{.}\]
 Cette d\'efinition ne d\'epend pas de l'extension finie $K$ choisie.
 Rappelons que la hauteur de Weil est une fonction positive qui ne s'annule qu'en z\'ero et les racines de l'unit\'e
 et qu'elle est invariante par conjugaison
 (\ie pour tout $\alpha\in\kbar$ et tout conjugu\'e $\beta\in\kbar$ de $\alpha$ sur $k$, on a $\hw(\beta)=\hw(\alpha)$).
 Par ailleurs, elle v\'erifie
 la \emph{propri\'et\'e de Northcott}:
 pour tous r\'eels $D>0$ et $c>0$,
 il n'existe qu'un nombre fini de points $\alpha\in\kbar$ tels que $[k(\alpha):k]\leq D$ et $\hw(\alpha)\leq c$.

 De plus, pour tous $\alpha,\beta\in\kbar$ et pour tout $n\in\Z$, on~a:
 \begin{itemize}
  \item[$\minibullet$] $\hw(\alpha+\beta)\leq \hw(\alpha)+\hw(\beta)$;
  \item[$\minibullet$] $\hw(\alpha\,\beta)\leq \hw(\alpha)+\hw(\beta)$;
  \item[$\minibullet$] $\hw(\alpha^n)=|n|\,\hw(\alpha)$ o\`u $|\cdot|$\label{notavaleurabsolueusuelle}
   est la valeur absolue usuelle sur $\R$.
 \end{itemize}


 L. Denis (\cf \cite{DENIS1}) a associ\'e \`a tout $A$-module de Drinfeld une hauteur canonique.
 La construction de L. Denis 
 se g\'en\'eralise sans aucune difficult\'e aux \mbox{$R$-modules} de Drinfeld d\'efinis sur $\kbar$,
 o\`u $R$ est un ordre d'une extension CM finie de~$k$. On se contente donc d'en rappeler sa d\'efinition:
 
 \begin{defi}
  Soient $F/k$ une extension CM finie, $R$ un ordre de $F$
  et $\phi'$ un $R$-module de Drinfeld de rang $r'$ d\'efini sur $\kbar$.
  On appelle \emph{hauteur canonique} de $\phi'$
  la fonction $\hcan_{\phi'}:\kbar\longrightarrow\R_+$\label{notahcanphiprime}
  d\'efinie~par:
  \[\hcan_{\phi'}(\alpha):=\lim\limits_{i\to\infty}\frac{\hw\!\left(\phi'_{a^i}(\alpha)\right)}{q^{i\,r'\degR a}}\]
  o\`u $a\in R\setminus\Fbar_q$.
 \end{defi}
 
 Cette d\'efinition ne d\'epend pas de l'\'el\'ement $a$ choisi (th. $1$ de \cite{DENIS1}).

 Comme dans le cas classique, la hauteur canonique satisfait les bonnes propri\'et\'es suivantes:

 \begin{theo}\label{hautcan}
  Soient $F/k$ une extension CM finie, $R$ un ordre de~$F$
  et $\phi'$ un $R$-module de Drinfeld de rang $r'$ d\'efini sur $\kbar$.
  Alors la hauteur canonique $\hcan_{\phi'}$ de $\phi'$ v\'erifie les propri\'et\'es suivantes: 
  \begin{enumerate}
   \item Pour tout $a\in R$ et tout $\alpha\in\kbar$, on a:
    \[\hcan_{\phi'}\!\left(\phi'_a(\alpha)\right)=q^{r'\degR a}\ \hcan_{\phi'}(\alpha)\text{;}\]
   \item Il existe une constante r\'eelle $\gamma(\phi')\geq1$ telle que pour tout $\alpha\in\kbar$, on~a:
    \[\left|\hcan_{\phi'}(\alpha)-\hw(\alpha)\right|\leq\gamma(\phi')\text{;}\]
   \item La fonction $\hcan_{\phi'}$ est l'unique fonction d\'efinie sur $\kbar$ v\'erifiant \`a la fois la propri\'et\'e~$1$
    pour un certain $a\in R\setminus\Fbar_q$ et la propri\'et\'e $2$;
   \item La fonction $\hcan_{\phi'}$ v\'erifie la propri\'et\'e de Northcott. 
   \item Soit $\alpha\in\kbar$, alors $\hcan_{\phi'}(\alpha)=0$ si et seulement si $\alpha$ est un point de torsion de $\phi'$;
   \item Pour tous $\alpha\in\kbar$ et $\delta\in\phi'\!\left(\kbar\right)_\tors$, on a:
    \[\hcan_{\phi'}(\alpha+\delta)=\hcan_{\phi'}(\alpha)\text{;}\]
   \item Pour tous $\alpha,\beta\in\kbar$, on a:
    \[\hcan_{\phi'}(\alpha+\beta)\leq\hcan_{\phi'}(\alpha)+\hcan_{\phi'}(\beta)\text{.}\] 
  \end{enumerate}
 \end{theo}



 Soient $F/k$ une extension CM finie, $R$ un ordre de $F$,
 $\phi'$ un $R$-module de Drinfeld de rang $r'$ d\'efini sur $\kbar$
 et $\phi$ la restriction de $\phi'$ \`a $A$ (\cf \'egalit\'e (\ref{eqextscalairesModDrinfeld}) p. \pageref{eqextscalairesModDrinfeld}).
 Nous allons maintenant montrer que les hauteurs canoniques relatives \`a~$\phi$ et~$\phi'$ co\"incident.

 \begin{prop}\label{propEgalHautCanparExtScal}
  Pour tout $\alpha\in\bar{k}$, on a $\hcan_\phi(\alpha)=\hcan_{\phi'}(\alpha)$.
 \end{prop}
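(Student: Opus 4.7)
L'approche sera de revenir directement \`a la d\'efinition de la hauteur canonique. Puisque $R$ est un ordre de $F$ contenant $A$ (ce qui est implicite dans l'\'egalit\'e~(\ref{eqextscalairesModDrinfeld}) et la notion m\^eme de restriction de $\phi'$ \`a $A$), l'ensemble $A\setminus\Fbar_q$ est une partie non vide de $R\setminus\Fbar_q$. Le th\'eor\`eme~1 de \cite{DENIS1} assurant que la limite d\'efinissant $\hcan_{\phi'}(\alpha)$ ne d\'epend pas du choix de $a\in R\setminus\Fbar_q$, l'id\'ee est de faire ce calcul avec un \'el\'ement $a\in A\setminus\Fbar_q$, de fa\c{c}on \`a tomber directement sur la formule d\'efinissant $\hcan_\phi(\alpha)$.

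Le point cl\'e sera l'identit\'e des exposants apparaissant dans les deux limites. Fixons $a\in A\setminus\Fbar_q$. D'apr\`es l'\'egalit\'e~(\ref{eqextscalairesModDrinfeld}), on a $\phi'_a=\phi_a$, et donc aussi $\phi'_{a^i}=\phi_{a^i}$ pour tout entier $i\geq0$. En comparant les $\tau$-degr\'es des deux membres de cette \'egalit\'e, calcul\'es respectivement via le rang $r'$ de $\phi'$ (comme $R$-module de Drinfeld) et via le rang $r$ de $\phi$ (comme $A$-module de Drinfeld), on obtiendra :
\[r'\,\degR a=\deg_\tau\phi'_a=\deg_\tau\phi_a=r\,\degA a\text{.}\]

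Il suffira alors d'\'ecrire la d\'efinition de $\hcan_{\phi'}(\alpha)$ avec ce choix de $a$ :
\[\hcan_{\phi'}(\alpha)=\lim_{i\to\infty}\frac{\hw(\phi'_{a^i}(\alpha))}{q^{i\,r'\,\degR a}}=\lim_{i\to\infty}\frac{\hw(\phi_{a^i}(\alpha))}{q^{i\,r\,\degA a}}=\hcan_\phi(\alpha)\text{,}\]
ce qui conclura. Il n'y a pas d'obstacle significatif : la seule observation non triviale est l'ind\'ependance du choix de $a$ dans la d\'efinition de la hauteur canonique (th\'eor\`eme~1 de \cite{DENIS1}), qui permet de r\'eduire le probl\`eme \`a un simple changement de param\`etre.
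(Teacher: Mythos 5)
Votre preuve est correcte et suit essentiellement la m\^eme d\'emarche que celle du texte: choisir $a\in A\setminus\Fbar_q$, utiliser l'ind\'ependance du choix de $a$ dans la d\'efinition de $\hcan_{\phi'}$, et constater que les exposants des deux limites co\"incident. La seule diff\'erence est cosm\'etique: vous obtenez l'\'egalit\'e $r'\,\degR a=r\,\degA a$ en comparant les $\tau$-degr\'es de $\phi'_a=\phi_a$, tandis que le texte la d\'eduit des relations $r=r'\,t$ et $\degR a=t\,\degA a$.
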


 \begin{proof}
  Soit $t$ le rang de $R$ en tant que $A$-module. 
  D'apr\`es le~\S\ref{subsectionEndoModDrinfeld},
  $\phi$ est de rang $r=r'\,t$ et pour tout $a\in A$, on a $\deg_R a=t\,\deg_A a$.
  Soit $a\in A$ tel que $a\in R\setminus\Fbar_q$.
  Alors pour tout $\alpha\in\bar{k}$ on a:
  \begin{multline*}
   \hcan_{\phi'}(\alpha)=\lim\limits_{n \to +\infty}\frac{\hw(\phi'_{a^n}(\alpha))}{q^{n r'\deg_R a}}
    =\lim\limits_{n \to +\infty}\frac{\hw(\phi_{a^n}(\alpha))}{q^{n r' t\deg_A a}} \\
    =\lim\limits_{n \to +\infty}\frac{\hw(\phi_{a^n}(\alpha))}{q^{n r\deg_A a}}
    =\hcan_{\phi}(\alpha)\text{.}
  \end{multline*}
 \end{proof}

 Par abus de notation on pourra encore noter $\hcan_\phi$ la hauteur normalis\'ee du $R$-module de Drinfeld~$\phi'$.

 On vient de voir que la hauteur canonique est invariante par extension des scalaires du module de Drinfeld.
 On va maintenant montrer par le biais de la proposition suivante,
 qui est une g\'en\'eralisation de la proposition $2$ de \cite{POONEN},
 que l'on peut relier les hauteurs canoniques associ\'ees \`a deux modules de Drinfeld isog\`enes.

 \begin{prop}\label{prophcanetisog}\!
  Soient\! $\rho$\! et\! $\varrho$ deux $R$-modules de Drinfeld de rang~$r$ d\'efinis sur $\bar{k}$ isog\`enes via $P\in\bar{k}\{\tau\}$
  (\emph{i.e.}  $P\,\rho=\varrho\,P$).
  Alors, pour tout $\alpha\in\bar{k}$, on a:
  \[q^{\deg_\tau P}\,\hcan_\rho(\alpha)=\hcan_\varrho\!\left(P(\alpha)\right)\text{.}\]
 \end{prop}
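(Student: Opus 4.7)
Le plan est d'utiliser la caract\'erisation par unicit\'e de la hauteur canonique (point $3$ du th\'eor\`eme \ref{hautcan}). On d\'efinit la fonction $f:\kbar\longrightarrow\R_+$ par
\[f(\alpha):=q^{-\deg_\tau P}\,\hcan_\varrho(P(\alpha))\text{.}\]
Il suffit de montrer que $f$ satisfait \`a la fois la propri\'et\'e fonctionnelle $1$ du th\'eor\`eme~\ref{hautcan} pour un certain $a\in R\setminus\Fbar_q$ et la propri\'et\'e $2$, relativement \`a $\rho$; par unicit\'e, on aura alors $f=\hcan_\rho$, ce qui \'equivaut \`a l'\'egalit\'e annonc\'ee.

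Pour la propri\'et\'e fonctionnelle, je choisis $a\in R\setminus\Fbar_q$. Comme $P\,\rho_a=\varrho_a\,P$, on a $P(\rho_a(\alpha))=\varrho_a(P(\alpha))$, d'o\`u, en appliquant la propri\'et\'e $1$ \`a $\hcan_\varrho$:
\[f(\rho_a(\alpha))=q^{-\deg_\tau P}\,\hcan_\varrho(\varrho_a(P(\alpha)))=q^{-\deg_\tau P}\,q^{r\degR a}\,\hcan_\varrho(P(\alpha))=q^{r\degR a}\,f(\alpha)\text{.}\]
Pour la propri\'et\'e $2$, on combine d'une part la propri\'et\'e $2$ appliqu\'ee \`a $\varrho$, qui donne $|\hcan_\varrho(P(\alpha))-\hw(P(\alpha))|\leq\gamma(\varrho)$, et d'autre part l'estim\'ee classique suivante: vu que $P=\sum_{i=0}^{\deg_\tau P}c_i\,\tau^i$ s'interpr\`ete comme un polyn\^ome usuel $P(x)=\sum_i c_i\,x^{q^i}\in\kbar[x]$ de degr\'e $q^{\deg_\tau P}$ en $x$, il existe une constante $C(P)$ ne d\'ependant que des coefficients de $P$ telle que
\[|\hw(P(\alpha))-q^{\deg_\tau P}\,\hw(\alpha)|\leq C(P)\]
pour tout $\alpha\in\kbar$. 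On en d\'eduit $|f(\alpha)-\hw(\alpha)|\leq q^{-\deg_\tau P}(\gamma(\varrho)+C(P))$, ce qui est la propri\'et\'e $2$ pour $f$.

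Par unicit\'e, il vient $f=\hcan_\rho$, donc $q^{\deg_\tau P}\,\hcan_\rho(\alpha)=\hcan_\varrho(P(\alpha))$. L'\'etape la plus d\'elicate est l'in\'egalit\'e $|\hw(P(\alpha))-q^{\deg_\tau P}\,\hw(\alpha)|\leq C(P)$: la majoration est imm\'ediate via les propri\'et\'es \'el\'ementaires de $\hw$ (sous-additivit\'e de la hauteur par rapport \`a la somme et au produit, avec l'\'egalit\'e $\hw(\alpha^{q^i})=q^i\,\hw(\alpha)$ qui est exacte car $\tau$ est un morphisme d'anneaux en caract\'eristique $p$), tandis que la minoration rel\`eve d'un argument standard place par place (en s\'eparant les places o\`u $|\alpha|_v$ est grand, o\`u le terme dominant $c_{\deg_\tau P}\alpha^{q^{\deg_\tau P}}$ contr\^ole $P(\alpha)$, des autres places, en nombre fini; \cf preuve de la proposition $2$ de \cite{POONEN} pour une version similaire dans le cadre des modules de Drinfeld).
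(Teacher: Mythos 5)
Votre d\'emonstration est correcte. Elle repose sur le m\^eme ingr\'edient essentiel que celle du texte, \`a savoir l'in\'egalit\'e fonctorielle $\left|\hw(P(\alpha))-q^{\deg_\tau P}\,\hw(\alpha)\right|\leq C(P)$ (th. $B.2.5$ de \cite{HINSIL}, que le texte cite lui aussi sans la red\'emontrer), combin\'ee \`a la relation d'entrelacement $P\,\rho_a=\varrho_a\,P$. La diff\'erence tient \`a l'emballage~: le texte injecte cette in\'egalit\'e directement dans la limite d\'efinissant la hauteur canonique, en observant que $P\!\left(\rho_{a^i}(\alpha)\right)=\varrho_{a^i}\!\left(P(\alpha)\right)$ et que le terme d'erreur $c(P)/q^{i\,r\degR a}$ tend vers z\'ero~; vous passez au contraire par la caract\'erisation d'unicit\'e (point $3$ du th\'eor\`eme \ref{hautcan}) appliqu\'ee \`a la fonction $f=q^{-\deg_\tau P}\,\hcan_\varrho\circ P$. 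Les deux chemins sont \'equivalents en substance --- l'unicit\'e se d\'emontre elle-m\^eme par l'argument de limite --- mais le v\^otre est un peu plus \og axiomatique \fg~: il n'utilise que les propri\'et\'es $1$, $2$ et $3$ d\'ej\`a \'etablies de la hauteur canonique, tandis que la preuve du texte est autonome par rapport \`a ces propri\'et\'es et travaille directement avec les hauteurs de Weil jusqu'au passage \`a la limite. Vos v\'erifications (\'equation fonctionnelle pour $f$ via $P\,\rho_a=\varrho_a\,P$, bornitude de $f-\hw$ par in\'egalit\'e triangulaire) sont justes~; notez simplement que l'in\'egalit\'e fonctorielle, que vous esquissez en fin de preuve, est le seul point non formel de l'argument et qu'il est l\'egitime de la citer, comme le fait le texte.
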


 \begin{proof}
  D'apr\`es la propri\'et\'e fonctorielle de la hauteur (\emph{cf.} th. $B.2.5$ de \cite{HINSIL}),
  il existe une constante $c(P)>0$ ne d\'ependant que du polyn\^ome~$P$ telle que pour tout $\alpha\in\bar{k}$, on a:
  \[\left|\hw\!\left(P(\alpha)\right)-q^{\deg_\tau P}\,\hw(\alpha)\right|\leq c(P)\text{.}\]
  Ainsi, pour tout $a\in R\setminus\overline{\F}_q$ et tout entier $i>0$, on a:
  \[\left|\frac{\hw\!\left(P\!\left(\rho_{a^i}(\alpha)\right)\right)}{q^{i\,r\deg_R a}}
   -\frac{q^{\deg_\tau P}\,\hw\!\left(\rho_{a^i}(\alpha)\right)}{q^{i\,r\deg_R a}}\right|\leq\frac{c(P)}{q^{i\,r\deg_R a}}\text{.}\]
  Or $P\!\left(\rho_{a^i}(\alpha)\right)=\varrho_{a^i}\!\left(P(\alpha)\right)$, d'o\`u:
  \[\left|\frac{\hw\!\left(\varrho_{a^i}\!\left(P(\alpha)\right)\right)}{q^{i\,r\deg_R a}}
   -\frac{q^{\deg_\tau P}\,\hw\!\left(\rho_{a^i}(\alpha)\right)}{q^{i\,r\deg_R a}}\right|\leq\frac{c(P)}{q^{i\,r\deg_R a}}\text{.}\]
  On obtient donc le r\'esultat souhait\'e en faisant tendre $i$ vers l'infini.
 \end{proof}

\section{D\'emonstration du th\'eor\`eme \ref{thlehmerBpsifini}}\label{chapB}\label{CHAPB}

Le but de cette partie est de d\'emontrer le th\'eor\`eme \ref{thlehmerBpsifini}.
Pour ce faire,
nous commencerons par montrer dans le~\S\ref{inegultrametriqB}
des in\'egalit\'es ultram\'etriques faisant intervenir un module de Drinfeld de signe normalis\'e,
ce qui nous permettra, gr\^ace \`a la formule du produit, 
d'en d\'eduire au \S\ref{minorhautcanBcondi} des minorations conditionnelles de la hauteur canonique.
\`A l'aide d'un argument reposant entre autres sur le principe des tiroirs (\cf \ref{principtiroirsBannquotient}),
nous nous affranchirons dans le~\S\ref{demothpsifinie} de ces contraintes.


\label{notademo}
\`A partir de maintenant et jusqu'au \S\ref{minorhautcanBcondi}, on fixe 
$F/k$ une extension CM finie, $\psi$ un $\OF$-module de Drinfeld de signe normalis\'e,
$K/k$ une extension finie sur laquelle $\psi$ est d\'efini
et $L/K$ une extension galoisienne finie de groupe de Galois $G$.
Soient $H$ un sous-groupe du centre de $G$ et $E\subseteq L$ le sous-corps fix\'e par $H$.
On se donne encore $d_1>0$ un entier et $v$ une place finie non triviale de $K$ telle que pour toute place $w|v$ de~$E$,
on ait $\left[E_w:K_v\right]\leq d_1$.
on note $\pcar$ l'id\'eal premier de $\OK$ associ\'e \`a la place $v$,
$\mcar:=\pcar\cap\OF$ l'id\'eal premier de $\OF$ en-dessous de $\pcar$
et on fixe $\qcar$ un id\'eal premier de $\Oe$ au-dessus de $\pcar$ (\ie $\pcar=\qcar\cap\OK$).
Soient $\nbclass$ le nombre de classes de $\OF$, 
$m\in\OF$ un g\'en\'erateur de l'id\'eal principal $\mcar^\nbclass$
et $d$ le degr\'e sur $\F_q$ de l'unique place de $F$ au-dessus de $\infty$.
On pose:
\begin{equation}\label{eqdefnu}
\nu:=d\,(q^d-1)\degOE\qcar/\degOF\mcar\text{;}
\end{equation}
\begin{equation}\label{eqdefn}
n:=\nbclass\,d\,(q^d-1)\degOE\qcar\text{;}
\end{equation}
\textit{et}
\begin{equation}\label{eqdefs}
s:=\nbclass\,d\,(q^d-1)\text{.}
\end{equation}

\subsection{In\'egalit\'es m\'etriques}\label{inegultrametriqB}
\leavevmode\par

On \'etablit ici quelques in\'egalit\'es ultram\'etriques utiles pour la suite.

\begin{lemm}\label{lemclef1}
 Soit $M$\! une extension finie de $K$.
 Alors, 
 pour tout ${\alpha\!\in\! M}$ et toute place finie $w$ de $M$, on~a:
 \[\max\!\left\{1,\left|\psi_{m^\nu}(\alpha)\right|_w\right\}
  =\max\!\left\{1,\left|\alpha\right|_w^{q^n}\right\}\text{.}\]
\end{lemm}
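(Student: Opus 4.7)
The plan is to combine the congruence in Proposition~\ref{propCongModDNormalise} with a routine ultrametric case analysis. The first step is to unpack the congruence into an explicit form for $\psi_{m^\nu}$: with the present choice of~$\nu$, the leading coefficient $\mu_\psi(m^\nu)$ equals~$1$---this is where the identity $\mu_\psi(m^{d(q^d-1)})=1$ established at the end of~\S\ref{subsectionThHayes}, combined with the multiplicativity rule~(\ref{eqrelmupsiab}), is crucial---and all lower-order coefficients lie in~$\pcar\subseteq\OK$ by~(\ref{eq:propCMDN}). So one can write
\[\psi_{m^\nu}=\tau^n+\sum_{i=0}^{n-1}c_i\tau^i,\qquad c_i\in\OK,\]
the coefficients being in $\OK$ because $\psi$, being of normalized sign, is defined over $\OcalHFplus\subseteq\OK$. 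For every finite place~$w$ of~$M$, integrality yields $|c_i|_w\le1$.

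The second step is a straightforward ultrametric dichotomy on $|\alpha|_w$. If $|\alpha|_w\le1$, the ultrametric inequality makes $|\psi_{m^\nu}(\alpha)|_w\le\max_i|c_i|_w|\alpha|_w^{q^i}\le1$ and $|\alpha|_w^{q^n}\le1$, so both sides of the desired equality collapse to~$1$. If $|\alpha|_w>1$, the monomial $\alpha^{q^n}$, with its coefficient equal to~$1$, strictly dominates each $c_i\alpha^{q^i}$ for $i<n$ in $w$-absolute value, since $|c_i\alpha^{q^i}|_w\le|\alpha|_w^{q^i}<|\alpha|_w^{q^n}$. The ultrametric inequality then saturates to an equality, giving $|\psi_{m^\nu}(\alpha)|_w=|\alpha|_w^{q^n}>1$, so both sides of the identity equal $|\alpha|_w^{q^n}$.

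The only delicate point I anticipate is verifying that the leading coefficient is genuinely~$1$ (or at least a $w$-unit) at the finite place~$w$: had $\mu_\psi(m^\nu)$ been divisible by the residue characteristic at~$w$, the leading monomial could fail to dominate and the equality would break down. The specific shape of~$\nu$ in~(\ref{eqdefnu}) is tailored precisely to rule this out; everything else is then a direct consequence of strong non-archimedean triangle inequalities, without further structural input from the theory of Drinfeld modules.
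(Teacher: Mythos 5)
Your proof is correct and follows essentially the same route as the paper: both use Proposition~\ref{propCongModDNormalise} to see that $\psi_{m^\nu}=\psi_{\mcar^{\nbclass\nu}}$ is monic of degree $n$ with integral coefficients, then conclude by the ultrametric dichotomy on $|\alpha|_w$. The only difference is cosmetic: you re-derive the fact that the leading coefficient is $1$ from $\mu_\psi(m^{d(q^d-1)})=1$, whereas the paper simply invokes that $\psi_{\mcar^{\nbclass\nu}}$ is by definition the unitary generator, but both are already contained in the cited proposition.
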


\begin{proof}
 Soit $\alpha\in M$. Comme $\psi$ est de signe normalis\'e, ses coefficients sont entiers (\emph{cf.}~\S\ref{subsectionThHayes}).
 De plus, $\psi_{m^\nu}$ est un polyn\^ome unitaire de degr\'e $n$ d'apr\`es la proposition \ref{propCongModDNormalise} (relation (\ref{eq:propCMDN})).
 Il existe donc des \'el\'ements ${a_0,\dots,a_{n-1}\in\OK}$ tels que:
 \[\psi_{m^\nu}(\alpha)=a_0\,\alpha+\cdots+a_{n-1}\,\alpha^{q^{n-1}}+\alpha^{q^n}\text{.}\]
 De plus, comme pour tout $i\in\{0,\dots,n-1\}$ le coefficient $a_i$ est entier, pour toute place finie $w$ de~$M$, on a $|a_i|_w\leq1$.
 Ainsi, il y a deux cas possibles:
 \begin{enumerate}
  \item Si $|\alpha|_w>1$, alors pour tout $i\in\{0,\dots,n-1\}$, on a:
   \[\left|a_i\,\alpha^{q^i}\right|_w\leq|\alpha|_w^{q^i}<|\alpha|_w^{q^n}\text{,}\]
   d'o\`u
   \[\left|\psi_{m^\nu}(\alpha)\right|_w=|\alpha|_w^{q^n}>1\]
   et donc
   \[\max\!\left\{1,\left|\psi_{m^\nu}(\alpha)\right|_w\right\}
    =|\alpha|_w^{q^n}=\max\!\left\{1,\left|\alpha\right|_w^{q^n}\right\}\text{.}\]
  \item Si $|\alpha|_w\leq1$, alors:
   \[\left|\psi_{m^\nu}(\alpha)\right|_w\leq1\] 
   et donc
   \[\max\!\left\{1,\left|\psi_{m^\nu}(\alpha)\right|_w\right\}
    =1=\max\!\left\{1,\left|\alpha\right|_w^{q^n}\right\}\text{.}\]
 \end{enumerate}
\end{proof}

\begin{lemm}\label{lemclef2}
 Soit $M$\! une extension finie de $K$.
 Alors, 
 pour tout ${\alpha\!\in\! M}$ et toute place $w|v$ de $M$, on~a:
 \[\left|\psi_{m^\nu}(\alpha)-\alpha^{q^n}\right|_w\leq q^{-e(w/v)}\,\max\!\left\{1,|\alpha|_w\right\}^{q^{n-1}}\text{.}\]
\end{lemm}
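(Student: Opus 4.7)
L'id\'ee est de reproduire l'argument du lemme \ref{lemclef1}, mais en exploitant le fait suppl\'ementaire que, gr\^ace \`a la proposition \ref{propCongModDNormalise}, les coefficients interm\'ediaires de $\psi_{m^\nu}$ sont en fait dans l'id\'eal $\pcar$, et pas seulement entiers. Puisque $\psi$ est d\'efini sur $K$ et est de signe normalis\'e, on a $H_F^+\subseteq K$, et la proposition \ref{propCongModDNormalise} s'applique avec $E$ jouant le r\^ole de $L$. Elle fournit donc la congruence
\[\psi_{m^\nu}\equiv\tau^{n}\mod\pcar,\]
o\`u $n=\nbclass\,d\,(q^d-1)\degOE\qcar$ est le degr\'e en $\tau$ de $\psi_{m^\nu}$.

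Ensuite, j'\'ecrirais $\psi_{m^\nu}(X)=a_0\,X+a_1\,X^q+\cdots+a_{n-1}\,X^{q^{n-1}}+X^{q^n}$ avec $a_0,\ldots,a_{n-1}\in\OK$ (les coefficients de $\psi$ sont entiers, \cf \S\ref{subsectionThHayes}). La congruence ci-dessus signifie pr\'ecis\'ement que $a_i\in\pcar$ pour tout $i\in\{0,\ldots,n-1\}$. Pour toute place $w$ de $M$ au-dessus de $v$, on en d\'eduit $w(a_i)\geq e(w/v)$, c'est-\`a-dire
\[|a_i|_w\leq q^{-e(w/v)}.\]

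Il ne reste plus qu'\`a appliquer l'in\'egalit\'e ultram\'etrique \`a la diff\'erence $\psi_{m^\nu}(\alpha)-\alpha^{q^n}=\sum_{i=0}^{n-1}a_i\,\alpha^{q^i}$. Pour chaque $i\in\{0,\ldots,n-1\}$, on majore
\[\left|a_i\,\alpha^{q^i}\right|_w\leq q^{-e(w/v)}\,|\alpha|_w^{q^i}\leq q^{-e(w/v)}\,\max\{1,|\alpha|_w\}^{q^{n-1}},\]
la derni\`ere in\'egalit\'e r\'esultant d'une disjonction sur le signe de $|\alpha|_w-1$ (si $|\alpha|_w\geq1$ on utilise $q^i\leq q^{n-1}$; si $|\alpha|_w<1$ on majore par $1$). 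L'in\'egalit\'e ultram\'etrique sur la somme fournit alors exactement la borne annonc\'ee.

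\textbf{Difficult\'e principale.} Il n'y en a pas v\'eritablement: tout le contenu arithm\'etique a \'et\'e absorb\'e dans la proposition \ref{propCongModDNormalise} (le \og rel\`evement du Frobenius \fg). Le seul point de vigilance est de s'assurer que les hypoth\`eses de cette proposition sont bien satisfaites dans le cadre du lemme, ce qui d\'ecoule de $H_F^+\subseteq K$ (cons\'equence du fait que $\psi$ est d\'efini sur $K$ et de signe normalis\'e), et de bien traduire l'appartenance \`a $\pcar$ des coefficients en une majoration $w$-adique via l'indice de ramification $e(w/v)$.
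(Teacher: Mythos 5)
Votre preuve est correcte et suit exactement la même démarche que celle du texte: on écrit les coefficients $a_0,\dots,a_{n-1}$ de $\psi_{m^\nu}$, on déduit de la congruence (\ref{eq:propCMDN}) de la proposition \ref{propCongModDNormalise} que $a_i\in\pcar$ donc $|a_i|_w\leq q^{-e(w/v)}$, et on conclut par l'inégalité ultramétrique. Vous explicitez même un peu plus soigneusement que le texte le passage de $|\alpha|_w^{q^i}$ à $\max\{1,|\alpha|_w\}^{q^{n-1}}$.
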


\begin{proof}
 Soient $a_0,\dots,a_{n-1}$ les coefficients de $\psi_{m^\nu}$, \ie:
 \[\psi_{m^\nu}=a_0\,\tau^0+\cdots+a_{n-1}\,\tau^{n-1}+\tau^n\text{.}\]
 Soient $\alpha\in M$ et $w$ une place de $M$ au-dessus de $v$, alors, d'apr\`es l'in\'egalit\'e ultram\'etrique, on a:
 \[\left|\psi_{m^\nu}(\alpha)-\alpha^{q^n}\right|_w\leq\max_{0\leq i\leq n-1}\!\left\{|a_i|_w\,|\alpha|_w^{q^i}\right\}\text{.}\]
 Or, d'apr\`es la congruence (\ref{eq:propCMDN}) de la proposition \ref{propCongModDNormalise}, on a:
 \[\psi_{m^\nu}\equiv\tau^n\mod\mathfrak{p}\text{.}\]
 Ainsi pour tout $i\in\{0,\dots,n-1\}$, on a $a_i\in\mathfrak{p}$, d'o\`u $|a_i|_w\leq q^{-e(w/v)}$ ce qui permet de conclure.
\end{proof}

Le lemme suivant, bas\'e sur le th\'eor\`eme d'approximation forte, nous permettra de nous ramener dans les preuves au cas des entiers.

\begin{lemm}\label{lemSAT}
 Soit $M$ une extension finie de $k$.
 Soit $w$ une place finie de $M$.
 Alors, pour tout $\alpha\in M$ non nul,
 il existe $\beta\in\OM$ tel que ${\alpha\,\beta\in\OM}$,
 et $|\beta|_w=\max\{1,|\alpha|_w\}^{-1}$.
\end{lemm}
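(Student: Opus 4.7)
The plan is to translate the conclusion into pointwise conditions on the valuations $w'(\beta)$ for the finite places $w'$ of $M$, and then to realize these conditions using the Dedekind structure of $\OM$, which is the integral closure of $A$ in $M$. Setting $n_{w'} := \max\{0, -w'(\alpha)\}$ for each finite place $w'$, the requirements $\beta \in \OM$ and $\alpha\beta \in \OM$ together amount to $w'(\beta) \geq n_{w'}$ for every finite $w'$, while the normalization $|\beta|_w = \max\{1, |\alpha|_w\}^{-1}$ amounts to the exact equality $w(\beta) = n_w$.

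Since $\alpha \in M^\times$, the set $\{w' : n_{w'} > 0\}$ is finite (standard: $\alpha\OM$ is a fractional ideal with finite support), so $I := \prod_{w'} \mathfrak{p}_{w'}^{n_{w'}}$ is a genuine integral ideal of $\OM$, where $\mathfrak{p}_{w'}$ denotes the maximal ideal corresponding to $w'$. Any $\beta \in I$ already satisfies $w'(\beta) \geq n_{w'}$ at every finite place. To upgrade the inequality at $w$ into the exact equality $w(\beta) = n_w$, I would take $\beta \in I \setminus I\mathfrak{p}_w$, which is nonempty because $\OM$ is Dedekind and hence $I\mathfrak{p}_w \subsetneq I$. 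Such a $\beta$ satisfies all the required conditions.

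No substantial obstacle is anticipated: the argument rests on only two completely standard inputs, namely the finiteness of the support of the fractional ideal $\alpha\OM$ and the Dedekind inequality $I\mathfrak{p}_w \subsetneq I$. The author's attribution to strong approximation (``th\'eor\`eme d'approximation forte'') reflects the fact that the same conclusion can equivalently be obtained by simultaneous approximation at the finite set $\{w\} \cup \{w' \text{ finite} : w'(\alpha) < 0\}$, which is exactly one formulation of strong approximation in the function-field setting; the Dedekind route above is just a convenient repackaging of that fact.
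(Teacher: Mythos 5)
Your proof is correct, and it follows a genuinely different route from the paper's. The paper invokes the strong approximation theorem (th.~$1.6.5$ of \cite{STICHT}) on the set $\Sigma:=\{w\}\cup\{\omega\text{ finie}\,:\,|\alpha|_\omega>1\}$ to produce $\beta$ with the prescribed absolute values on $\Sigma$ and $|\beta|_\omega\leq1$ elsewhere; you instead work entirely inside the Dedekind domain $\OM$, forming the integral ideal $I=\prod_{w'}\mathfrak{p}_{w'}^{n_{w'}}$ with $n_{w'}=\max\{0,-w'(\alpha)\}$ and picking $\beta\in I\setminus I\mathfrak{p}_w$. The key facts you use — that the finite places of $M$ correspond to the maximal ideals of $\OM$, that an ideal $\prod\mathfrak{p}^{e_\mathfrak{p}}$ consists exactly of the elements with $v_{\mathfrak{p}}\geq e_{\mathfrak{p}}$ everywhere, and that $I\mathfrak{p}_w\subsetneq I$ by unique factorization — are all standard, and they correctly yield $w(\beta)=n_w$ exactly and $w'(\beta)\geq n_{w'}$ elsewhere, which is precisely what the lemma asks (the paper's $\beta$ satisfies exact equalities at all of $\Sigma$, but only the one at $w$ is needed). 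Your approach buys a self-contained, purely ideal-theoretic argument with no appeal to an approximation theorem; the paper's buys brevity by citing a ready-made result. As you note, the two are two faces of the same fact, and your translation of the three conditions $\beta\in\OM$, $\alpha\beta\in\OM$, $|\beta|_w=\max\{1,|\alpha|_w\}^{-1}$ into valuation inequalities and one equality is exactly right.
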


\begin{proof}
 Soit $S$ l'ensemble des places finies de $M$ (\emph{i.e.} des places $\omega$ de $M$ telles que $\omega\nmid\infty$).
 Soit $\Sigma_0:=\left\{\omega\in S\,\big|\,|\alpha|_\omega>1\right\}$ (c'est un ensemble fini car $\alpha$ n'a qu'un nombre fini de p\^oles).
 Soit $\Sigma:=\Sigma_0\cup\{w\}$.
 Alors, d'apr\`es le th\'eor\`eme d'approximation forte (\emph{cf.} th. $1.6.5$ de \cite{STICHT}),
 il existe $\beta\in M$ tel que $|\beta|_\omega=\max\{1,|\alpha|_\omega\}^{-1}$ pour toute $\omega\in\Sigma$
 et $|\beta|_\omega\leq1$ pour toute $\omega\in S\setminus\Sigma$.
 Ainsi $\beta$ et~$\alpha\,\beta$ sont bien dans $\OM$.
\end{proof}



\begin{prop}\label{propclefextBnonrami}
 Supposons que l'extension $L/E$ soit non ramifi\'ee au-dessus de~$\qcar$.
 Soit $\sigma_\qcar$ le symbole d'Artin associ\'e \`a $\qcar$.
 Alors, pour toute place non triviale $w$ de $L$ et tout $\alpha\in L$, on a:
 \[\left|\psi_{m^\nu}(\alpha)-\sigma_\qcar^s(\alpha)\right|_w
  \leq C(w)\,\max\!\left\{1,\left|\psi_{m^\nu}(\alpha)\right|_w\right\}
  \,\max\!\left\{1,|\sigma_\qcar^s(\alpha)|_w\right\}\]
 o\`u $C(w)=q^{-1}$ si $w|v$ et $C(w)=1$ sinon. 
\end{prop}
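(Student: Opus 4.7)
Le plan est de distinguer selon la position de $w$ par rapport à $v$ et à $\qcar$. Premier cas : si $w\nmid v$, l'inégalité ultramétrique (valable pour toute place d'un corps de fonctions en caractéristique positive) fournit directement
\[\left|\psi_{m^\nu}(\alpha)-\sigma_\qcar^s(\alpha)\right|_w\leq\max\!\left\{\left|\psi_{m^\nu}(\alpha)\right|_w,\left|\sigma_\qcar^s(\alpha)\right|_w\right\}\leq\max\!\left\{1,\left|\psi_{m^\nu}(\alpha)\right|_w\right\}\max\!\left\{1,\left|\sigma_\qcar^s(\alpha)\right|_w\right\},\]
d'où le résultat avec $C(w)=1$.

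Deuxième cas : lorsque $w\mid v$ \emph{et} $w\mid\qcar$. L'hypothèse de non-ramification de $L/E$ au-dessus de $\qcar$, conjuguée à l'abélianité de $H=\Gal(L/E)$ (car $H\subseteq\Zcal(G)$), entraîne que $\sigma_\qcar$ engendre le groupe de décomposition de $w$ au-dessus de $\qcar$ ; il préserve donc $w$, et induit sur le corps résiduel le Frobenius $x\mapsto x^{q^{\degOE\qcar}}$. Itéré $s$ fois, il y induit $x\mapsto x^{q^n}$, qui est également la réduction de $\psi_{m^\nu}$ modulo $\pcar$ d'après la proposition~\ref{propCongModDNormalise}. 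Pour $|\alpha|_w\leq1$, ces deux congruences modulo $\mcar_w$ entraînent $|\psi_{m^\nu}(\alpha)-\sigma_\qcar^s(\alpha)|_w\leq q^{-1}$, ce qui coïncide exactement avec le membre de droite (dont les deux facteurs maximaux valent $1$). Pour $|\alpha|_w>1$, l'intégralité de la valuation implique $|\alpha|_w\geq q$ ; le lemme~\ref{lemclef1} donne $|\psi_{m^\nu}(\alpha)|_w=|\alpha|_w^{q^n}$, et la préservation de $w$ par $\sigma_\qcar^s$ donne $|\sigma_\qcar^s(\alpha)|_w=|\alpha|_w$. L'ultramétrie fournit alors $|\psi_{m^\nu}(\alpha)-\sigma_\qcar^s(\alpha)|_w\leq|\alpha|_w^{q^n}$, et la minoration $|\alpha|_w\geq q$ transforme cela en $|\alpha|_w^{q^n}\leq q^{-1}|\alpha|_w^{q^n+1}$, qui est précisément le membre de droite voulu.

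Troisième cas : si $w\mid v$ mais $w\nmid\qcar$, on se ramène au cas précédent par transport galoisien. Soit $w_0$ une place de $L$ au-dessus de $\qcar$ ; $w$ et $w_0$ étant toutes deux au-dessus de $v$ et $L/K$ étant galoisienne, il existe $\sigma\in G$ tel que $w=w_0\circ\sigma^{-1}$. L'hypothèse cruciale $H\subseteq\Zcal(G)$ assure que $\sigma_\qcar$ commute avec $\sigma$, et le fait que $\psi$ soit défini sur $K$ assure que $\psi_{m^\nu}$ commute également avec $\sigma$. Posant $\alpha'=\sigma^{-1}(\alpha)$, on obtient
\[\left|\psi_{m^\nu}(\alpha)-\sigma_\qcar^s(\alpha)\right|_w=\left|\psi_{m^\nu}(\alpha')-\sigma_\qcar^s(\alpha')\right|_{w_0},\]
et les mêmes commutations donnent $|\psi_{m^\nu}(\alpha)|_w=|\psi_{m^\nu}(\alpha')|_{w_0}$ et $|\sigma_\qcar^s(\alpha)|_w=|\sigma_\qcar^s(\alpha')|_{w_0}$, ce qui ramène l'estimation au deuxième cas appliqué à $\alpha'$ en la place $w_0\mid\qcar$.

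L'obstacle principal me paraît être précisément cette dernière étape de réduction galoisienne : elle repose essentiellement sur la centralité $H\subseteq\Zcal(G)$, qui seule garantit la commutation $\sigma\sigma_\qcar^s=\sigma_\qcar^s\sigma$ pour tout $\sigma\in G$. Sans cette hypothèse, on ne pourrait traiter que les places de $L$ effectivement au-dessus de $\qcar$, et non toutes les places au-dessus de $v$ comme requis. Une fois cette réduction acquise, le reste n'est qu'une manipulation combinatoire combinant la congruence $\psi_{m^\nu}\equiv\tau^n\mod\pcar$ de la proposition~\ref{propCongModDNormalise} et l'interprétation Frobenius du symbole d'Artin.
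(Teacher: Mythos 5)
Votre démonstration est correcte, mais elle emprunte un chemin sensiblement différent de celui de l'article. La preuve originale établit d'abord, pour tout $\gamma\in\OL$ entier et pour toute place $w|v$, la majoration $\left|\gamma^{q^n}-\sigma_\qcar^s(\gamma)\right|_w\leq q^{-1}$: elle observe que la congruence $\sigma_{\qcar'}^s(\gamma)\equiv\gamma^{q^n}\bmod\qcar'\OL$ vaut pour tout premier $\qcar'$ de $\Oe$ au-dessus de $\pcar$ et que $\sigma_{\qcar'}=\sigma_\qcar$ (ces symboles étant conjugués dans $G$ et contenus dans $H\subseteq\Zcal(G)$); elle passe ensuite d'un élément entier à un $\alpha\in L$ quelconque grâce au lemme d'approximation forte (lemme \ref{lemSAT}), avant de conclure par les lemmes \ref{lemclef1} et \ref{lemclef2} et l'inégalité ultramétrique. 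Vous procédez au contraire localement: traitement direct des places $w$ divisant $\qcar$ via le groupe de décomposition et le Frobenius résiduel, avec une disjonction $|\alpha|_w\leq1$ ou $|\alpha|_w>1$ dans laquelle la discrétude de la valuation (qui donne $|\alpha|_w\geq q$) tient lieu d'approximation forte, puis transport galoisien vers les autres places au-dessus de $v$. Les deux arguments exploitent la centralité de $H$ de façon essentiellement équivalente: chez vous elle fournit la commutation $\sigma\,\sigma_\qcar^s=\sigma_\qcar^s\,\sigma$ pour tout $\sigma\in G$, dans l'article elle donne $\sigma_{\qcar'}=\sigma\,\sigma_\qcar\,\sigma^{-1}=\sigma_\qcar$. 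Votre variante évite le recours à l'approximation forte; en contrepartie, il faudrait expliciter que la congruence d'Artin, énoncée pour $\gamma\in\OL$, s'applique bien à tout $\alpha$ de l'anneau de valuation $\Ocal_w$ (ce qui découle, comme vous l'indiquez, de l'action du groupe de décomposition sur le corps résiduel de $w$), et l'argument de l'article a l'avantage de se transposer tel quel à la preuve de la proposition \ref{propclefextBrami}.
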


\begin{proof}
 Soit $w$ une place non triviale de $L$.
 Si $w\nmid v$, l'in\'egalit\'e d\'ecoule directement de l'in\'egalit\'e ultram\'etrique.
 On suppose donc que~$w|v$. 
 Pour tout $\gamma\in\OL$, on a:
 \[\sigma_\qcar(\gamma)\equiv\gamma^{q^{\degOE\qcar}}\!\!\mod\qcar\OL\]
 et donc
 \begin{equation}\label{eq:congsymbArtin}
  \sigma_\qcar^s(\gamma)\equiv\gamma^{q^{s\,\degOE\qcar}}\equiv\gamma^{q^n}\!\!\mod\qcar\OL
 \end{equation}
 car $n=s\,\degOE\qcar$ (\cf notations (\ref{eqdefn}) et (\ref{eqdefs}) p. \pageref{eqdefs}).
 Si maintenant $\qcar'$ est un autre id\'eal de $\Oe$ au-dessus de $\pcar$,
 alors $\qcar'$ est non ramifi\'e dans $L$ et $\degOE\qcar'=\degOE\qcar$
 (car $L/K$ et $E/K$ sont galoisiennes).
 On a donc encore la congruence (\ref{eq:congsymbArtin}) avec $\qcar'$ au lieu de $\qcar$.
 Mais $\sigma_\qcar$ et $\sigma_{\qcar'}$,
 sont des \'el\'ements de $H$ conjugu\'es dans $G$ et $H$ est dans $\Zcal(G)$, donc $\sigma_\qcar=\sigma_{\qcar'}$.
 Ceci montre que pour tout $\gamma\in\OL$, on a:
 \[\left|\gamma^{q^n}-\sigma_\qcar^s(\gamma)\right|_w\leq q^{-1}\text{.}\]
 Soit $\alpha\in L$, alors d'apr\`es le lemme \ref{lemSAT},
 il existe $\beta\in\mathcal{O}_L$ tel que $\alpha\,\beta\in\OL$ et $|\beta|_w=\max\{1,|\alpha|_w\}^{-1}$.
 Ainsi:
 \begin{eqnarray*}
  \big|\alpha^{q^n}\!\!\!\!\!\!\!\!\!\! & - & \!\!\!\!\!\!\!\!\!\!\sigma_\qcar^s(\alpha)\big|_w \\
  & = & |\beta|_w^{-q^n}\,\left|(\alpha \beta)^{q^n}-\sigma_\qcar^s(\alpha \beta)
   +\left(\sigma_\qcar^s(\beta)-\beta^{q^n}\right)\sigma_\qcar^s(\alpha)\right|_w \\
  & \leq & |\beta|_w^{-q^n}
   \,\max\!\left\{\left|(\alpha \beta)^{q^n}-\sigma_\qcar^s(\alpha \beta)\right|_w,
   \left|\left(\sigma_\qcar^s(\beta)-\beta^{q^n}\right)\sigma_\qcar^s(\alpha)\right|_w \right\} \\
  & \leq & \max\{1,|\alpha|_w\}^{q^n}\,q^{-1}\,\max\!\left\{1,\left|\sigma_\mathfrak{p}^s(\alpha)\right|_w\right\} \\
  & \underset{\text{lem. \ref{lemclef1}}}{\leq} & q^{-1}\,\max\!\left\{1,\left|\psi_{m^{\nu}}(\alpha)\right|_w\right\}
   \,\max\!\left\{1,\left|\sigma_\qcar^s(\alpha)\right|_w\right\} \text{.}
 \end{eqnarray*}
 D'autre part,
 \begin{eqnarray*}
  \left|\psi_{m^\nu}(\alpha)-\alpha^{q^n}\right|_w
   & \underset{\text{lem. \ref{lemclef2}}}{\leq} & q^{-e(w/v)}\,\max\!\left\{1,|\alpha|_w\right\}^{q^{n-1}} \\
   & \underset{\text{lem. \ref{lemclef1}}}{\leq} & q^{-1}\,\max\!\left\{1,\left|\psi_{m^\nu}(\alpha)\right|_w\right\}\text{.}
 \end{eqnarray*}
 Le r\'esultat d\'ecoule maintenant directement de l'utilisation de l'in\'egalit\'e ultram\'etrique.
\end{proof}


La proposition suivante est l'analogue de la proposition $2.3$ de \cite{AMOZAN2},
la d\'emonstration \'etant quasi identique, on ne la fait pas.

\begin{prop}\label{grpHpnontrivialB}
 Supposons que l'extension $L/E$ soit ramifi\'ee au-dessus de~$\qcar$.
 Alors l'ensemble
 \[\label{notaHqcar}
  \Hqcar:=\left\{\sigma\in\Gal(L/E)\,\big|\,\forall\gamma\in\OL,
  \sigma(\gamma)^{q^{\degOE\qcar}}\equiv\gamma^{q^{\degOE\qcar}}\!\!\!\!\mod\qcar\OL\right\}
 \]
 est un sous-groupe non trivial du groupe d'inertie de $\qcar$ sur $L$.
\end{prop}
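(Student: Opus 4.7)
Mon plan est d'identifier $\Hqcar$ comme un sous-groupe de ramification sup\'erieure du groupe d'inertie de $\qcar$ dans $L/E$. Cette identification rendra automatiques la structure de sous-groupe et l'inclusion dans l'inertie; la non-trivialit\'e se ram\`enera alors \`a un \'enonc\'e local de th\'eorie de la ramification.

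Posons $f:=\degOE\qcar$. Comme $\OL$ est de caract\'eristique $p$ et que $q^{f}$ est une puissance de $p$, l'identit\'e de Frobenius donne $\sigma(\gamma)^{q^f}-\gamma^{q^f}=(\sigma(\gamma)-\gamma)^{q^f}$ dans $\OL$, de sorte que la condition d\'efinissant $\Hqcar$ s'\'ecrit $(\sigma(\gamma)-\gamma)^{q^f}\in\qcar\OL$ pour tout $\gamma\in\OL$. Fixons un id\'eal premier $\mathfrak{Q}$ de $\OL$ au-dessus de $\qcar$ et posons $e:=e(\mathfrak{Q}/\qcar)$. Comme $\Gal(L/E)=H$ est ab\'elien (\'etant un sous-groupe de $\Zcal(G)$), les groupes de d\'ecomposition, d'inertie et de ramification sup\'erieure $G_i$ (en num\'erotation inf\'erieure) attach\'es aux diff\'erents premiers de $\OL$ au-dessus de $\qcar$ co\"incident, tout comme leurs indices de ramification. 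En d\'ecomposant $\qcar\OL=\prod_{\mathfrak{Q}'\mid\qcar}\mathfrak{Q}'^e$ et en comparant les valuations, la condition devient $v_{\mathfrak{Q}'}(\sigma(\gamma)-\gamma)\geq\lceil e/q^f\rceil$ pour tout $\gamma\in\OL$ et tout $\mathfrak{Q}'\mid\qcar$, ce qui \'equivaut \`a $\sigma\in G_N$ o\`u $N:=\lceil e/q^f\rceil-1$. On obtient ainsi $\Hqcar=G_N$, qui est effectivement un sous-groupe du groupe d'inertie $I_\qcar=G_0$.

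Il reste \`a \'etablir la non-trivialit\'e. Le cas facile est $e\leq q^f$: alors $N=0$ et $\Hqcar=I_\qcar$ est non trivial par l'hypoth\`ese de ramification. L'obstacle principal sera le cas sauvage $e>q^f$, o\`u $L/E$ est n\'ecessairement sauvagement ramifi\'ee en $\qcar$ et o\`u il faut prouver $G_N\neq 1$ avec $N\geq 1$. Je proc\'ederai alors comme dans la proposition~$2.3$ de \cite{AMOZAN2}: puisque $H$ est ab\'elien, le th\'eor\`eme de Hasse--Arf garantit que les sauts en num\'erotation sup\'erieure de la filtration sont entiers; la correspondance de Herbrand, combin\'ee \`a la majoration $|G_0/G_1|\mid q^{ff'}-1$ (o\`u $f'$ est le degr\'e r\'esiduel), fournit une minoration du plus grand saut $j_s$ en num\'erotation inf\'erieure suffisante pour garantir $j_s\geq N$. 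C'est pr\'ecis\'ement ce d\'ecompte local, parall\`ele \`a celui de \cite{AMOZAN2}, qui constitue le c\oe{}ur de la difficult\'e.
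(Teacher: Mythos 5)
Sur le fond, votre premi\`ere moiti\'e est correcte et c'est une vraie reconstruction: l'identit\'e $\sigma(\gamma)^{q^f}-\gamma^{q^f}=(\sigma(\gamma)-\gamma)^{q^f}$ en caract\'eristique $p$ et la centralit\'e de $\Gal(L/E)$ dans $G$ identifient bien $\Hqcar$ au groupe de ramification $G_N$ en num\'erotation inf\'erieure avec $N=\lceil e/q^f\rceil-1$, donc \`a un sous-groupe du groupe d'inertie. Notez que l'article ne d\'emontre pas cette proposition: il renvoie \`a la proposition $2.3$ de \cite{AMOZAN2} en d\'eclarant la preuve \og quasi identique \fg; votre plan suit la m\^eme ligne.

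La non-trivialit\'e, en revanche, n'est pas \'etablie, et l'esquisse que vous en donnez contient une impr\'ecision qui la ferait \'echouer. D'une part, la borne $|G_0/G_1|\mid q^{ff'}-1$ que vous invoquez est celle d'une extension galoisienne quelconque (plongement du quotient mod\'er\'e dans $\F_{\mathfrak{Q}}^{\times}$); elle ne permet pas de d\'eduire que $e>q^f$ entra\^ine la ramification sauvage, car une extension mod\'er\'ement ramifi\'ee peut avoir $e$ allant jusqu'\`a $q^{ff'}-1>q^f$ d\`es que $f'\geq2$, auquel cas $G_N\subseteq G_1=\{1\}$. Ce qu'il faut, et ce qui est vrai pr\'ecis\'ement parce que $\Gal(L/E)$ est ab\'elien, c'est $|G_0/G_1|\mid q^f-1$ (le Frobenius agit sur $G_0/G_1$ \`a la fois par conjugaison, trivialement, et par \'el\'evation \`a la puissance $q^f$): c'est cette borne qui r\`egle le cas mod\'er\'e et ram\`ene tout au cas sauvage. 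D'autre part, dans le cas sauvage, Hasse--Arf et Herbrand seuls ne suffisent pas: avec un unique saut sauvage $b_1$ on a exactement $b_1=u_1\,e/|G_1|$, qui peut \^etre tr\`es inf\'erieur \`a $e/q^f$ si $|G_1|$ est grand. Il faut en plus la contrainte ab\'elienne $|G^u/G^{u+1}|\leq q^f$ pour $u\geq1$ (plongement dans $U^u/U^{u+1}\simeq\F_{q^f}$ du corps de base, via le corps de classes local); on en tire alors $j_s=b_n\geq(u_n-u_{n-1})\,e/|G_{b_n}|\geq e/q^f>N$. Vous d\'esignez vous-m\^eme ce d\'ecompte comme \og le c\oe{}ur de la difficult\'e \fg\ sans l'effectuer: la preuve est donc incompl\`ete exactement \`a l'endroit o\`u elle devait aller au-del\`a du cas facile $e\leq q^f$.
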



La proposition \ref{propclefextBnonrami} \'etablit une in\'egalit\'e ultram\'etrique pour les extensions non ramifi\'ees,
une in\'egalit\'e similaire dans le cas ramifi\'e fait l'objet de la proposition suivante.

\begin{prop}\label{propclefextBrami}
 On suppose que l'extension $L/E$ est ramifi\'ee au-des\-sus de~$\qcar$.
 Soient $\eta\in H_\qcar$ tel que $\eta\neq\id$ et $e_\qcar(L/E)$ l'indice de ramification de~$\qcar$ sur~$L$.
 Alors, pour toute place $w$ de $L$ et tout $\alpha\in L$, on~a:
 \begin{multline*}
  \big|\psi_{m^\nu}(\alpha)-\psi_{m^\nu}(\eta(\alpha))\big|_w \\
   \leq C(w)\,\max\!\left\{1,\left|\psi_{m^\nu}(\alpha)\right|_w\right\}
    \,\max\!\left\{1,\left|\psi_{m^\nu}(\eta(\alpha))\right|_w\right\}
 \end{multline*}
 o\`u $C(w)=q^{-e_\qcar(L/E)}$ si $w|v$ et $C(w)=1$ sinon. 
\end{prop}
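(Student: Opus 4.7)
La preuve suit de pr\`es celle de la proposition \ref{propclefextBnonrami}, en rempla\c{c}ant le symbole d'Artin $\sigma_\qcar^s$ par l'\'el\'ement $\eta \in H_\qcar$. Lorsque $w \nmid v$, l'in\'egalit\'e ultram\'etrique combin\'ee au lemme \ref{lemclef1} fournit imm\'ediatement le r\'esultat avec $C(w) = 1$. On suppose donc que $w \mid v$, et on part de la d\'ecomposition
\[
\psi_{m^\nu}(\alpha) - \psi_{m^\nu}(\eta(\alpha)) = \bigl[\psi_{m^\nu}(\alpha) - \alpha^{q^n}\bigr] + \bigl[\alpha^{q^n} - \eta(\alpha)^{q^n}\bigr] - \bigl[\psi_{m^\nu}(\eta(\alpha)) - \eta(\alpha)^{q^n}\bigr].
\]

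L'ingr\'edient essentiel est la congruence d\'ecoulant de la d\'efinition de $H_\qcar$: en \'elevant la relation $\eta(\gamma)^{q^{\degOE\qcar}} \equiv \gamma^{q^{\degOE\qcar}} \mod \qcar\OL$ \`a la puissance $q^{(s-1)\degOE\qcar}$ (op\'eration licite en caract\'eristique $p$ puisque le Frobenius respecte les congruences modulo un id\'eal), on obtient $\eta(\gamma)^{q^n} \equiv \gamma^{q^n} \mod \qcar\OL$ pour tout $\gamma \in \OL$, car $n = s\,\degOE\qcar$; pour une place $w$ de $L$ au-dessus de $\qcar$, cela livre $\bigl|\eta(\gamma)^{q^n} - \gamma^{q^n}\bigr|_w \leq q^{-e_\qcar(L/E)}$. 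Pour $\alpha \in L$ quelconque, le lemme \ref{lemSAT} permet de choisir $\beta \in \OL$ avec $\alpha\beta \in \OL$ et $|\beta|_w = \max\{1,|\alpha|_w\}^{-1}$; un calcul analogue \`a celui de la proposition \ref{propclefextBnonrami}, reposant sur $\eta(\alpha\beta) = \eta(\alpha)\eta(\beta)$, donne alors
\[
\bigl|\alpha^{q^n} - \eta(\alpha)^{q^n}\bigr|_w \leq q^{-e_\qcar(L/E)} \max\{1,|\alpha|_w\}^{q^n} \max\{1,|\eta(\alpha)|_w\}^{q^n}.
\]

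Pour conclure, les deux termes ext\'erieurs de la d\'ecomposition sont contr\^ol\'es par le lemme \ref{lemclef2}, en utilisant que $e(w/v) \geq e_\qcar(L/E)$. La conversion des facteurs $\max\{1,|\alpha|_w\}^{q^n}$ et $\max\{1,|\eta(\alpha)|_w\}^{q^n}$ en $\max\{1,|\psi_{m^\nu}(\alpha)|_w\}$ et $\max\{1,|\psi_{m^\nu}(\eta(\alpha))|_w\}$ respectivement se fait par le lemme \ref{lemclef1}, et l'in\'egalit\'e ultram\'etrique appliqu\'ee aux trois termes livre la majoration voulue. Le point d\'elicat \`a justifier est le cas o\`u $w \mid v$ mais $w$ est au-dessus d'un id\'eal $\qcar' \neq \qcar$ de $\Oe$ divisant $\pcar$: on s'en sort en remarquant que le sous-groupe $H_{\qcar'}$ d\'efini de fa\c{c}on analogue est un conjugu\'e galoisien de $H_\qcar$ dans $G$, et comme $H_\qcar \subset H \subset \Zcal(G)$ est invariant par conjugaison, on a $H_{\qcar'} = H_\qcar$, de sorte que $\eta \in H_{\qcar'}$ et la congruence $\eta(\gamma)^{q^n} \equiv \gamma^{q^n} \mod \qcar'\OL$ reste valable.
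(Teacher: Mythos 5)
Votre preuve est correcte et suit essentiellement la même démarche que celle de l'article : même décomposition en trois termes via $\alpha^{q^n}$ et $\eta(\alpha)^{q^n}$, même usage du lemme \ref{lemSAT} pour se ramener aux entiers, mêmes lemmes \ref{lemclef1} et \ref{lemclef2} pour convertir les majorants, et même argument de conjugaison dans $\Zcal(G)$ pour traiter les places au-dessus d'un idéal $\qcar'\neq\qcar$. Vous explicitez même deux points que l'article laisse implicites (l'élévation de la congruence définissant $H_\qcar$ à la puissance $q^{(s-1)\degOE\qcar}$, licite car le Frobenius préserve les congruences, et l'inégalité $e(w/v)\geq e_\qcar(L/E)$), ce qui est bienvenu.
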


\begin{proof}
 Soit $w$ une place de $L$. 
 Si $w\nmid v$, l'in\'egalit\'e d\'ecoule directement de l'in\'egalit\'e ultram\'etrique.
 On suppose donc que $w|v$.
 Soit~$\qcar'$ un id\'eal premier de $\Oe$ au-dessus de $\pcar$.
 Alors $H_\qcar$ et~$H_{\qcar'}$ sont dans $H$ et sont conjugu\'es dans $G$.
 Or $H$ est dans $\Zcal(G)$, donc $H_\qcar=H_{\qcar'}$.
 De plus, $\degOE\qcar=\degOE\qcar'$ et $e_\qcar(L/E)=e_{\qcar'}(L/E)$.
 En posant:
 \[n:=\nbclass\,d\,(q^d-1)\degOE\qcar\text{,}\]
 on a donc:
 \[\eta(\gamma)^{q^n}\equiv\gamma^{q^n}\!\!\mod\qcar'\OL\text{.}\]
 Ainsi, pour tout $\gamma\in\OL$, on a: 
 \[\left|\gamma^{q^n}-\eta(\gamma)^{q^n}\right|_w\leq q^{-e_\qcar(L/E)}\text{.}\]
 Soit $\alpha\in L$. Alors d'apr\`es le lemme \ref{lemSAT},
 il existe $\beta\in\OL$ tel que $\alpha\,\beta\in\OL$ et $|\beta|_w=\max\{1,|\alpha|_w\}^{-1}$, ainsi:
 \begin{eqnarray*}
  \big|\alpha^{q^n}\!\!\!\!\!\!\!\!\!\! & - & \!\!\!\!\!\!\!\!\!\!\eta(\alpha)^{q^n}\big|_w \\
   & = & |\beta|_w^{-q^n}\!\!\,\left|(\alpha \beta)^{q^n}-\eta(\alpha \beta)^{q^n}
     +\left(\eta(\beta)^{q^n}-\beta^{q^n}\right)\eta(\alpha)^{q^n}\right|_w \\
   & \leq & |\beta|_w^{-q^n}\,\max\!\left\{\left|(\alpha \beta)^{q^n}-\eta(\alpha \beta)^{q^n}\right|,
     \left|\left(\eta(\beta)^{q^n}-\beta^{q^n}\right)\eta(\alpha)^{q^n}\right|_w \right\} \\
   & \leq & \max\{1,|\alpha|_v\}^{q^n}\,q^{-e_\qcar(L/K)}\,\max\!\left\{1,\left|\eta(\alpha)\right|_w\right\}^{q^n} \\
   & \underset{\text{lem. \ref{lemclef1}}}{\leq} & q^{-e_\qcar(L/K)}\,\max\!\left\{1,\left|\psi_{m^{\nu}}(\alpha)\right|_w\right\}
    \,\max\!\left\{1,\left|\psi_{m^{\nu}}(\eta(\alpha))\right|_w\right\} \text{.}
 \end{eqnarray*}
 D'autre part,
 \begin{eqnarray*}
  \left|\psi_{m^\nu}(\alpha)-\alpha^{q^n}\right|_w
   & \underset{\text{lem. \ref{lemclef2}}}{\leq} & q^{-e(w/v)}\,\max\!\left\{1,|\alpha|_w\right\}^{q^{n-1}} \\
   & \underset{\text{lem. \ref{lemclef1}}}{\leq} & q^{-e_\qcar(L/E)}\,\max\!\left\{1,\left|\psi_{m^\nu}(\alpha)\right|_w\right\}\text{.}
 \end{eqnarray*}
 Le r\'esultat d\'ecoule maintenant directement de l'utilisation de l'in\'egalit\'e ultram\'etrique.
\end{proof}


Le lemme suivant, que l'on couplera aux deux lemmes pr\'ec\'edents par la suite,
permet d'obtenir des majorations suffisamment fines en composant par $\psi_{m^\nu}$,
ce qui sera tr\`es utile pour obtenir les minorations de hauteur souhait\'ees (\cf prop. \ref{propextBnonrami} et \ref{propextBrami}).

\begin{lemm}[d'acc\'el\'eration]\label{accconv}
 Soient $w$ une place de $L$ au-dessus de $v$
 et $\alpha,\beta\in L$. On suppose qu'il existe un entier $c>0$ tel que:
 \[|\alpha-\beta|_w\leq q^{-c\,e_\qcar(L/E)}\,\max\{1,|\alpha|_w\}\,\max\{1,|\beta|_w\}\text{.}\]
 Alors, pour tout entier naturel $l$, on a:
 \[\left|\psi_{m^{\nu l}}(\alpha-\beta)\right|_w\leq q^{-(c+l)\,e_\qcar(L/E)}\,\max\!\left\{1,\left|\psi_{m^{\nu l}}(\alpha)\right|_w\right\}
  \,\max\!\left\{1,\left|\psi_{m^{\nu l}}(\beta)\right|_w\right\}\text{.}\]
\end{lemm}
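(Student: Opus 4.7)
The plan is to proceed by induction on $l \geq 0$. The base case $l = 0$ is immediate since $\psi_{m^{\nu \cdot 0}} = \psi_1 = \id$, so the conclusion collapses to the hypothesis.

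For the inductive step, set $e := e_\qcar(L/E)$ and introduce $\delta_l := \psi_{m^{\nu l}}(\alpha) - \psi_{m^{\nu l}}(\beta)$; by $\F_q$-linearity of Ore polynomials this equals $\psi_{m^{\nu l}}(\alpha - \beta)$. Set also $A_l := \max\{1, |\psi_{m^{\nu l}}(\alpha)|_w\}$ and $B_l := \max\{1, |\psi_{m^{\nu l}}(\beta)|_w\}$. The induction hypothesis reads $|\delta_l|_w \leq q^{-(c+l)e} A_l B_l$. By Lemma \ref{lemclef1} applied to $\psi_{m^\nu}$, we have $\max\{1, |\psi_{m^{\nu(l+1)}}(\alpha)|_w\} = A_l^{q^n}$ and similarly $B_l^{q^n}$ for $\beta$, so the inductive target reduces to establishing
\[
 |\psi_{m^\nu}(\delta_l)|_w \;\leq\; q^{-(c+l+1)e}\, A_l^{q^n}\, B_l^{q^n}.
\]

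To obtain this, I would expand $\psi_{m^\nu} = \tau^n + \sum_{i=0}^{n-1} a_i \tau^i$ and invoke Proposition \ref{propCongModDNormalise} to conclude $a_i \in \pcar$, so $|a_i|_w \leq q^{-e(w/v)} \leq q^{-e}$. The second inequality uses that for $w \mid v$, if $\qcar' := w|_E$, then $e(w/v) = e_{\qcar'}(L/E)\cdot e(\qcar'/v) \geq e_{\qcar'}(L/E) = e$, the final equality holding because $E/K$ is Galois, so the inertia subgroups in $L/E$ of the various primes above $v$ are conjugate. Writing $\psi_{m^\nu}(\delta_l) = \delta_l^{q^n} + \sum_{i=0}^{n-1} a_i\, \delta_l^{q^i}$, I would bound separately the leading and subdominant terms: for the leading term, $|\delta_l^{q^n}|_w \leq q^{-(c+l)e q^n}(A_l B_l)^{q^n}$, and the inequality $(c+l)q^n \geq c+l+1$ (valid as $c \geq 1$ and $q^n \geq 2$) yields the sought bound. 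For each subdominant term, $|a_i\, \delta_l^{q^i}|_w \leq q^{-e-(c+l)e q^i}(A_l B_l)^{q^i}$; the exponent relation $1 + (c+l)q^i \geq c+l+1$ holds for every $i \geq 0$, and $(A_l B_l)^{q^i} \leq (A_l B_l)^{q^n}$ since $A_l, B_l \geq 1$, so this term is again bounded as needed. The ultrametric inequality then concludes.

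The main (mild) obstacle is the exponent bookkeeping: one must notice that the two independent sources of decay — the $q^n$-fold amplification of $q^{-(c+l)e}$ coming from the Frobenius in the leading term, and the single factor $q^{-e}$ coming from the congruence $a_i \in \pcar$ in the subdominant terms — are each, \emph{separately}, sufficient to absorb the one additional $q^{-e}$ gain demanded at each step, as soon as $c \geq 1$ (automatic, $c$ being a positive integer) and $q^n \geq 2$ (automatic, $q \geq 2$ and $n \geq 1$). Everything else is routine application of the ultrametric inequality and of Lemmas \ref{lemclef1}--\ref{lemclef2} already established.
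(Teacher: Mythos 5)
Your proof is correct and follows essentially the same route as the paper's: induction on $l$, reduction to a single application of $\psi_{m^\nu}$, the congruence $\psi_{m^\nu}\equiv\tau^n\bmod\pcar$ from Proposition \ref{propCongModDNormalise} to control the lower-order coefficients, and Lemma \ref{lemclef1} to convert $\max\{1,|\cdot|_w\}^{q^n}$ into $\max\bigl\{1,\left|\psi_{m^\nu}(\cdot)\right|_w\bigr\}$. The only difference is organizational: you bound each monomial of $\psi_{m^\nu}(\delta_l)$ separately, whereas the paper splits into the cases $|\alpha-\beta|_w\geq 1$ and $|\alpha-\beta|_w<1$; your term-by-term bookkeeping is, if anything, slightly cleaner, since it makes explicit that the leading term is absorbed by the $q^n$-fold amplification of $q^{-(c+l)e}$ alone while each lower term is absorbed by the single factor $q^{-e}$ coming from $a_i\in\pcar$.
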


\begin{proof}
 On raisonne par r\'ecurrence sur $l$.
 Pour $l=0$ le r\'esultat est imm\'ediat.
 Montrons le cas $l=1$.
 Soient $a_0,\dots,a_n\in\kbar$ les coefficients de $\psi_{m^\nu}$, \emph{i.e.}:
 \[\psi_{m^\nu}=a_0\,\tau^0+\cdots+a_n\,\tau^n\text{,}\]
 avec $a_0=m^\nu$ et $a_n=1$.
 D'apr\`es la congruence (\ref{eq:propCMDN}) p. \pageref{eq:propCMDN}, on a:
 \[\psi_{m^\nu}=\psi_{\mcar^{\nbclass\nu}}\equiv\tau^n\mod\pcar\text{.}\]
 Ainsi, pour tout $i\in\{0,\dots,n-1\}$, on a $|a_i|_w\leq q^{-e(w/v)}$.
 Or,
 \[\left|\psi_{m^\nu}(\alpha-\beta)\right|_w\leq\max_{0\leq i\leq n}\left\{|a_i|_w\,|\alpha-\beta|_w^{q^i}\right\}\text{.}\]
 Il y a donc deux cas possibles:
 \begin{enumerate}
  \item Si $|\alpha-\beta|_w\geq1$, alors:
   \begin{eqnarray*}
    \big|\psi_{m^\nu}(\alpha\!\!\!\!\!\!\!\!\!\! & - & \!\!\!\!\!\!\!\!\!\!\beta)\big|_w \\
     & \leq & |\alpha-\beta|_w^{q^n} \\
     & \leq & q^{-c\,e_\qcar(L/E)\,q^n}\,\max\{1,|\alpha|_w\}^{q^n}\,\max\{1,|\beta|_w\}^{q^n} \\
     & \underset{\text{lem.\ref{lemclef1}}}{\leq} & q^{-(c+1)\,e_\qcar(L/E)}\,\max\!\left\{1,\left|\psi_{m^\nu}(\alpha)\right|_w\right\}
      \,\max\!\left\{1,\left|\psi_{m^\nu}(\beta)\right|_w\right\} \text{;}
   \end{eqnarray*}
  \item Si $|\alpha-\beta|_w<1$, alors:
   \begin{eqnarray*}
    \big|\psi_{m^\nu}(\alpha\!\!\!\!\!\!\!\!\!\! & - & \!\!\!\!\!\!\!\!\!\!\beta)\big|_w \\
     & \leq & q^{-e(w/v)}\,|\alpha-\beta|_w \\
     & \leq & q^{-e(w/v)}\,q^{-c\,e_\qcar(L/E)}\,\max\{1,|\alpha|_w\}\,\max\{1,|\beta|_w\} \\
     & \leq & q^{-(c+1)\,e_\qcar(L/E)}\,\max\{1,|\alpha|_w\}^{q^n}\,\max\{1,|\beta|_w\}^{q^n} \\
     & \underset{\text{lem.\ref{lemclef1}}}{=} & q^{-(c+1)\,e_\qcar(L/E)}\,\max\!\left\{1,\left|\psi_{m^\nu}(\alpha)\right|_w\right\}
      \,\max\!\left\{1,\left|\psi_{m^\nu}(\beta)\right|_w\right\} \text{.}
   \end{eqnarray*}
 \end{enumerate}
 Supposons maintenant $l>1$. Alors
 \[\psi_{m^{\nu l}}(\alpha-\beta)=\psi_{m^\nu}\,\psi_{m^{\nu (l-1)}}(\alpha-\beta)\text{.}\]
 Or, par l'hypoth\`ese de r\'ecurrence, on a:
 \begin{multline*}
  \left|\psi_{m^{\nu (l-1)}}(\alpha-\beta)\right|_w \\
   \leq q^{-(c+l-1)\,e_\qcar(L/E)}
    \max\!\left\{1,\left|\psi_{m^{\nu (l-1)}}(\alpha)\right|_w\right\}
     \max\!\left\{1,\left|\psi_{m^{\nu (l-1)}}(\beta)\right|_w\right\}\text{.}
 \end{multline*}
 Il ne reste donc plus qu'\`a substituer $\psi_{m^{\nu (l-1)}}(\alpha)$ \`a $\alpha$ et $\psi_{m^{\nu (l-1)}}(\beta)$ \`a $\beta$,
 ainsi qu'\`a faire agir $\psi_{m^\nu}$ sur ${\psi_{m^{\nu (l-1)}}(\alpha)-\psi_{m^{\nu (l-1)}}(\beta)}$
 et \`a utiliser ce que l'on a montr\'e dans le cas $l=1$.  
\end{proof}

\subsection{Minorations de la hauteur canonique}\label{minorhautcanBcondi}
\leavevmode\par

Dans tout ce paragraphe on utilise les notations fix\'ees au d\'ebut de cette partie (\cf p. \pageref{notademo}). 
Elles feront implicitement partie
des conditions de tous les \'enonc\'es de ce paragraphe.
Notons aussi:
\[\label{notac3}
 c_3:=3\,\nbclass\,\dinfprime\,(q^\dinfprime-1)\,\gamma(\psi)\text{,}
\]
o\`u $\gamma(\psi)$ est le r\'eel d\'efini dans le th\'eor\`eme \ref{hautcan}, point $2$.

La proposition suivante fournit une minoration de la hauteur normalis\'ee dans le cas o\`u
l'extension~$L/E$ n'est pas ramifi\'ee au-dessus de $\qcar$.

\begin{prop}\label{propextBnonrami}
 On suppose que l'extension $L/E$ est non ramifi\'ee au-dessus de $\qcar$,
 alors pour tout $\alpha\in L$ qui n'est pas de torsion pour~$\psi$, on~a:
 \[\hcan_\psi(\alpha)\geq\frac{q^{-c_3\,\deg v\,d_1^2\,[K:k]}}{2\,[K:k]}\text{.}\]
\end{prop}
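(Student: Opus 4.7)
The plan is to apply the product formula to the (non-zero) iterates under $\psi_{m^{\nu l}}$ of $\beta := \psi_{m^\nu}(\alpha) - \sigma_\qcar^s(\alpha)$, bounding their local absolute values via proposition \ref{propclefextBnonrami} and the acceleration lemma \ref{accconv}, and then balancing the integer~$l$ against the additive error term $\gamma(\psi)$ relating $\hw$ and $\hcan_\psi$. Since $K\subseteq E$ and $\psi$ is defined over~$K$, the Artin symbol $\sigma_\qcar\in\Gal(L/E)$ commutes with $\psi_a$ for every $a$, so for every $l\geq 0$ one has $\psi_{m^{\nu l}}(\beta) = \psi_{m^{\nu(l+1)}}(\alpha) - \sigma_\qcar^s\!\left(\psi_{m^{\nu l}}(\alpha)\right)$. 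If this were zero, the scaling property of $\hcan_\psi$ (theorem \ref{hautcan}, point~$1$) and its Galois-invariance would force $q^{n(l+1)}\hcan_\psi(\alpha)=q^{nl}\hcan_\psi(\alpha)$; as $\alpha$ is not torsion and $q^n>1$, this is impossible, so $\psi_{m^{\nu l}}(\beta)\neq 0$ for every $l\geq 0$.

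Next I would apply the product formula $\sum_w \deg w\,\log_q|\psi_{m^{\nu l}}(\beta)|_w=0$. For $w\mid v$, the unramified hypothesis gives $e_\qcar(L/E)=1$; proposition \ref{propclefextBnonrami} (with $c=1$) combined with the acceleration lemma \ref{accconv} then yields
\[
 |\psi_{m^{\nu l}}(\beta)|_w \;\leq\; q^{-(1+l)}\max\!\left\{1,|\psi_{m^{\nu(l+1)}}(\alpha)|_w\right\}\max\!\left\{1,|\sigma_\qcar^s(\psi_{m^{\nu l}}(\alpha))|_w\right\},
\]
and for $w\nmid v$ the ultram\'etric inequality provides the same estimate without the factor~$q^{-(1+l)}$. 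Summing $\deg w\cdot\log_q$ of these bounds, invoking lemma \ref{lemclef1} at the finite places to rewrite $\max\{1,|\psi_{m^{\nu j}}(\alpha)|_w\}=\max\{1,|\alpha|_w\}^{q^{nj}}$, using the Galois-invariance of $\hw$, and applying the comparison $\hw\leq\hcan_\psi+\gamma(\psi)$ of theorem \ref{hautcan}(2), I arrive at
\[
 (1+l)\sum_{w\mid v}\deg w \;\leq\; [L:k]\,q^{nl}(q^n+1)\,\hcan_\psi(\alpha) + 2[L:k]\gamma(\psi).
\]

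The assumption $[E_w:K_v]\leq d_1$ combined with the unramified hypothesis bounds $e(w/v)\leq d_1$ for every place $w$ of $L$ above $v$, so $\sum_{w\mid v}\deg w\geq [L:K]\deg v/d_1$. Choosing $l$ so that $(1+l)\sum_{w\mid v}\deg w\geq 4[L:k]\gamma(\psi)$---for instance $l=\lceil 4[L:k]\gamma(\psi)/\sum_{w\mid v}\deg w\rceil$, which is dominated by $4d_1[K:k]\gamma(\psi)/\deg v+1$---the $\gamma$-term absorbs at most half of the numerator and one derives
\[
 \hcan_\psi(\alpha) \;\geq\; \frac{(1+l)\,\deg v}{4\,d_1\,[K:k]\,q^{n(1+l)}}.
\]
Using $\degOE\qcar\leq d_1\deg v$ gives $n\leq \nbclass d(q^d-1)d_1\deg v$, so $n(1+l)$ is bounded by a small multiple of $\nbclass d(q^d-1)\gamma(\psi)\,\deg v\,d_1^2\,[K:k]$. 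Finally, absorbing the prefactor $\deg v/(4d_1)$ into the exponent---using $\gamma(\psi)\geq 1$, $[K:k]\geq 1$, $\deg v\geq 1$, $d_1\geq 1$---yields the stated lower bound.

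The delicate part will be the last step: the constant $c_3 = 3\,\nbclass d(q^d-1)\gamma(\psi)$ and the precise denominator $2[K:k]$ emerge only after careful tracking of all numerical factors during the choice of $l$, the conversion between $\hw$ and $\hcan_\psi$, and the absorption of the $\deg v/(4d_1)$ prefactor into the exponent.
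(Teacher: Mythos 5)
Your proof follows the paper's argument step for step: the product formula applied to $\psi_{m^{\nu l}}\bigl(\psi_{m^\nu}(\alpha)-\sigma_\qcar^s(\alpha)\bigr)$, local bounds from the proposition \ref{propclefextBnonrami} combined with the acceleration lemma \ref{accconv}, then conversion to heights via $\left|\hw-\hcan_\psi\right|\leq\gamma(\psi)$ and Galois-invariance. Your non-vanishing argument (comparing $q^{n(l+1)}\hcan_\psi(\alpha)$ with $q^{nl}\hcan_\psi(\alpha)$) is a clean alternative to the paper's, which instead iterates $\sigma$ up to its order $o(\sigma)$ to produce $\psi_{m^{\nu(l+1)o(\sigma)}-m^{\nu l}}(\alpha)=0$; both are valid. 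The lower bound $\sum_{w|v}\deg w\geq[L:K]\deg v/d_1$ is also correct (it uses that $E/K$ and $L/K$ are Galois to transfer the unramifiedness at $\qcar$ to every prime of $E$ above $v$).

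The one genuine issue is quantitative: your choice $l=\lceil 4[L:k]\gamma(\psi)/\sum_{w|v}\deg w\rceil$ does not deliver the stated constants. With $1+l\leq 4d_1[K:k]\gamma(\psi)/\deg v+2$ and $n\leq\nbclass\,d\,(q^d-1)\,d_1\deg v$ one only gets $n(1+l)\leq\nbclass\,d\,(q^d-1)\bigl(4\gamma(\psi)d_1^2[K:k]+2d_1\deg v\bigr)$, which already exceeds $c_3\deg v\,d_1^2[K:k]=3\,\nbclass\,d\,(q^d-1)\gamma(\psi)\deg v\,d_1^2[K:k]$ when $\deg v$ is small; on top of this the prefactor $(1+l)\deg v/(4d_1)$ still has to be absorbed, and it can be $<1/2$, so it costs further in the exponent rather than helping. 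The paper avoids all of this by calibrating $l+1=(2\gamma(\psi)[K:k]+1)[E_\qcar:K_\pcar]$: then $(l+1)\sum_{w|v}\deg w\geq(2\gamma(\psi)[K:k]+1)[L:K]\degOK\pcar$, and after dividing by $[L:k]$ the factor $\degOK\pcar\geq1$ makes the left-hand side at least $2\gamma(\psi)+1/[K:k]$, so the $2\gamma(\psi)$ error cancels exactly and one is left with $1/[K:k]\leq2\,q^{n(l+1)}\hcan_\psi(\alpha)$ with no leftover prefactor, while $n(l+1)\leq3\,\nbclass\,d\,(q^d-1)\gamma(\psi)[E_\qcar:K_\pcar]\degOE\qcar\,[K:k]\leq c_3\deg v\,d_1^2[K:k]$. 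As written your argument proves the proposition with a larger constant in place of $c_3$ and a worse denominator, not the statement as stated; substituting the paper's choice of $l$ into your framework repairs it.
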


\begin{proof}
 Soit $\alpha\in L$ non de torsion pour $\psi$.
 Commen\c{c}ons par montrer que pour tout $\sigma\in\Gal(L/E)$ et pour tout $l\in\N$,
 on a:
 \[\psi_{m^{\nu (l+1)}}(\alpha)\neq\psi_{m^{\nu l}}\!\left(\sigma(\alpha)\right)\text{.}\]
 En effet, si $\psi_{m^\nu}\!\left(\psi_{m^{\nu l}}(\alpha)\right)=\sigma\!\left(\psi_{m^{\nu l}}(\alpha)\right)$,
 alors pour tout entier $j\geq1$, on~a:
 \[\psi_{m^\nu}^j\!\left(\psi_{m^{\nu l}}(\alpha)\right)=\sigma^j\!\left(\psi_{m^{\nu l}}(\alpha)\right)\]
 car $\psi_{m^\nu}$ et $\sigma$ commutent.
 Soit $o(\sigma)$ l'ordre de $\sigma$ dans $\Gal(L/K)$, alors:
 \[0=\psi_{m^\nu}^{o(\sigma)}\!\left(\psi_{m^{\nu l}}(\alpha)\right)-\psi_{m^{\nu l}}(\alpha)
  =\psi_{m^{\nu (l+1) o(\sigma)}-m^{\nu l}}(\alpha)\text{.}\]
 Donc $\alpha$ est un point de torsion pour~$\psi$, ce qui contredit l'hypoth\`ese faite sur $\alpha$.
 En particulier, en notant $\sigma_\mathfrak{p}$ le symbole d'Artin associ\'e \`a $\mathfrak{p}$,
 on obtient:
 \[\psi_{m^{\nu l}}\!\left(\psi_{m^\nu}(\alpha)-\sigma_\mathfrak{p}^s(\alpha)\right)\neq0\text{.}\]
 On peut donc appliquer la formule du produit (\cf th. $3$ du \S$3$ du chap. \uppercase\expandafter{\romannumeral 12} de \cite{ARTIN}):
 \[0=\sum_{w\in\mathcal{M}_L}\deg w\,\log_q\left|\psi_{m^{\nu l}}\!\left(\psi_{m^\nu}(\alpha)-\sigma_\mathfrak{p}^s(\alpha)\right)\right|_w\text{.}\]
 Ainsi, en appliquant la proposition \ref{propclefextBnonrami} et le lemme \ref{accconv} avec
 \[l:=\left(2\,\gamma(\psi)\,[K:k]+1\right)[E_\qcar:K_\pcar]-1\text{,}\]
 on obtient:
 \begin{multline}\label{eqFdPnonramiB}
  0\leq\sum_{w|v}\deg w\,(-(l+1))
   +\sum_{w\in\mathcal{M}_L} \deg w\,\log_q\max\!\left\{1,\left|\psi_{m^{\nu (l+1)}}(\alpha)\right|_w\right\} \\ 
    +\sum_{w\in\mathcal{M}_L} \deg w\,\log_q\max\!\left\{1,\left|\psi_{m^{\nu l}}\!\left(\sigma_\mathfrak{p}^s(\alpha)\right)\right|_w\right\}\text{.}
 \end{multline}
 Or, pour toute place $u|v$ de $E$, on a:
 \[\sum_{w|u} \deg w=[L:E]\,\deg u=[L:E]\,\degOE\qcar\]
 car l'extension $L/E$ est non ramifi\'ee et l'extension $L/K$ est galoisienne.
 De plus,
 \begin{multline*}
  \sum_{u|v}(l+1)\,[L:E]\,\deg u \\
   =\left(2\,\gamma(\psi)\,[K:k]+1\right)\,\sum_{u|v}[E_\qcar:K_\pcar]\,[L:E]\,\degOE\qcar \\
  \geq\left(2\,\gamma(\psi)\,[K:k]+1\right)\,[L:K]\,\degOK\pcar \text{.}
 \end{multline*}
 En divisant les membres de l'in\'egalit\'e (\ref{eqFdPnonramiB}) par $[L:k]$, on obtient:
 \[0\leq-\frac{\left(2\,\gamma(\psi)\,[K:k]+1\right)\,\degOK\pcar}{[K:k]}
   +\hw\!\left(\psi_{m^{\nu (l+1)}}(\alpha)\right)
   +\hw\!\left(\psi_{m^{\nu l}}\!\left(\sigma_\mathfrak{p}^s(\alpha)\right)\right)\text{,}\]
 d'o\`u
 \[\frac{\left(2\,\gamma(\psi)\,[K:k]+1\right)\,\degOK\pcar}{[K:k]}\leq2\,\gamma(\psi)
  +\hcan_\psi\!\left(\psi_{m^{\nu (l+1)}}(\alpha)\right)+\hcan_\psi\!\left(\psi_{m^{\nu l}}(\alpha)\right)\]
 car pour tout $\alpha\in\bar{k}$, on a $\left|\hw(\alpha)-\hcan_\psi(\alpha)\right|\leq\gamma(\psi)$,
 et la hauteur de Weil est invariante sous l'action du groupe de Galois.
 Ainsi:
 \[\frac{\left(2\,\gamma(\psi)\,[K:k]+1\right)\,\degOK\pcar}{[K:k]}\leq2\,\gamma(\psi)+2\,q^{n\,(l+1)}\,\hcan_\psi(\alpha)\text{,}\]
 d'o\`u
 \[\frac{1}{[K:k]}\leq2\,q^{n\,(l+1)}\,\hcan_\psi(\alpha)\text{.}\]
 D'autre part,
 \begin{eqnarray*}
  n\,(l+1) & = & \nbclass\,\dinfprime\,(q^\dinfprime-1)\,\deg_{\Oe}\qcar\,\left(2\,\gamma(\psi)\,[K:k]+1\right)[E_\qcar:K_\pcar] \\
  & \leq & 3\,\nbclass\,\dinfprime\,(q^\dinfprime-1)\,\gamma(\psi)\,[E_\qcar:K_\pcar]\,\deg_{\Oe}\qcar\,[K:k] \\
  & \leq & 3\,\nbclass\,\dinfprime\,(q^\dinfprime-1)\,\gamma(\psi)\,\deg v\,d_1^2\,[K:k] \text{.}
 \end{eqnarray*}
 Ce qui termine la preuve.
\end{proof}

Le r\'esultat qui suit donne une minoration de la hauteur normalis\'ee dans le cas ramifi\'e
et utilise la notation $\Hqcar$ d\'efinie dans la proposition~\ref{grpHpnontrivialB}.

\begin{prop}\label{propextBrami}
 On\ \,suppose\ \,que\ \,l'extension\ \,$L/E$\ \,est\ \,ramifi\'ee au-dessus de~$\qcar$.
 Soit~${\eta\in\Hqcar}$ tel que $\eta\neq\id$.
 Alors, pour tout $\alpha\in L$ tel que
 ${\alpha-\eta(\alpha)\notin\psi\left[\mcar^\infty\right]}$, on a:
 \[\hcan_\psi(\alpha)\geq\frac{q^{-c_3\,\deg v\,d_1^2\,[K:k]}}{2\,[K:k]}\text{.}\]
\end{prop}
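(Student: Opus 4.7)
Le plan consiste à reproduire la preuve de la proposition \ref{propextBnonrami} en remplaçant le symbole d'Artin $\sigma_\pcar^s$ par $\eta$ et en utilisant la proposition \ref{propclefextBrami} à la place de la proposition \ref{propclefextBnonrami}. Deux observations préalables sous-tendent l'argument. D'une part, $\psi$ étant défini sur $K\subseteq E$ et $\eta\in\Gal(L/E)$ fixant $K$ point par point, $\eta$ commute avec $\psi_{m^\nu}$; il en découle que pour tout $l\geq0$,
\[
\psi_{m^{\nu(l+1)}}(\alpha)-\psi_{m^{\nu(l+1)}}(\eta(\alpha))=\psi_{m^{\nu l}}\!\left(\psi_{m^\nu}(\alpha)-\psi_{m^\nu}(\eta(\alpha))\right)=\psi_{m^{\nu(l+1)}}(\alpha-\eta(\alpha))\text{.}
\]
D'autre part, l'hypothèse $\alpha-\eta(\alpha)\notin\psi[\mcar^\infty]$, jointe au fait que $m^\nbclass$ engendre $\mcar^\nbclass$, assure la non-nullité de cette quantité pour tout $l$, ce qui autorise l'application de la formule du produit.

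On fixe alors $l+1:=(2\gamma(\psi)[K:k]+1)[E_\qcar:K_\pcar]$, exactement comme dans le cas non ramifié. Pour les places $w\nmid v$ de $L$, l'inégalité ultramétrique donne la majoration triviale; pour les places $w|v$, on applique la proposition \ref{propclefextBrami} (avec la constante $c=1$) suivie du lemme d'accélération \ref{accconv} pour obtenir
\[
\left|\psi_{m^{\nu l}}\!\left(\psi_{m^\nu}(\alpha)-\psi_{m^\nu}(\eta(\alpha))\right)\right|_w\leq q^{-(l+1)\,e_\qcar(L/E)}\max\{1,|\psi_{m^{\nu(l+1)}}(\alpha)|_w\}\max\{1,|\psi_{m^{\nu(l+1)}}(\eta(\alpha))|_w\}\text{.}
\]
La formule du produit appliquée à $\psi_{m^{\nu l}}(\psi_{m^\nu}(\alpha)-\psi_{m^\nu}(\eta(\alpha)))$, divisée par $[L:k]$ et combinée à l'invariance galoisienne de $\hw$ (qui identifie $\hw(\psi_{m^{\nu(l+1)}}(\eta(\alpha)))$ et $\hw(\psi_{m^{\nu(l+1)}}(\alpha))$), donne ensuite
\[
0\leq -(l+1)\,e_\qcar(L/E)\,\frac{\sum_{w|v}\deg w}{[L:k]}+2\,\hw(\psi_{m^{\nu(l+1)}}(\alpha))\text{.}
\]

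Le point technique central --- et principal obstacle --- est la vérification de la minoration
\[
(l+1)\,e_\qcar(L/E)\sum_{w|v}\deg w\geq (2\gamma(\psi)[K:k]+1)\,[L:K]\,\deg v\text{,}
\]
qui traduit le fait que le facteur supplémentaire $e_\qcar(L/E)$ apparu à droite dans la proposition \ref{propclefextBrami} (absent du cas non ramifié) est exactement compensé par la contribution de ramification dans $\sum_{w|v}\deg w$. Elle repose sur l'identité $\sum_{w|u}\deg w=[L:E]\deg u/e_\qcar(L/E)$ pour chaque $u|v$, valable car $L/E$ est galoisienne avec indice de ramification constant au-dessus de $\pcar$, combinée à l'inégalité $[E_\qcar:K_\pcar]\sum_{u|v}\deg u\geq [E:K]\deg v$ provenant de la relation fondamentale dans l'extension galoisienne $E/K$. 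L'hypothèse $H\subseteq\Zcal(G)$ intervient crucialement ici: elle garantit simultanément la galoisianité de $E/K$ et l'égalité $H_\qcar=H_{\qcar'}$ pour tout premier $\qcar'$ conjugué. Compte tenu de $\hw\leq\hcan_\psi+\gamma(\psi)$ et de $\hcan_\psi(\psi_{m^{\nu(l+1)}}(\alpha))=q^{n(l+1)}\hcan_\psi(\alpha)$, on en déduit $\hcan_\psi(\alpha)\geq q^{-n(l+1)}/(2[K:k])$; la même estimation $n(l+1)\leq c_3\,\deg v\,d_1^2\,[K:k]$ qu'au cas non ramifié conclut.
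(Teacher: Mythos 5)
Votre démonstration est correcte et suit essentiellement la même voie que celle de l'article, qui se contente d'ailleurs de renvoyer au cas non ramifié en substituant la proposition \ref{propclefextBrami} à la proposition \ref{propclefextBnonrami}. Vous explicitez même davantage certains points laissés implicites (la commutation de $\eta$ avec $\psi_{m^\nu}$, la non-nullité via $\psi[m^{\nu(l+1)}]\subseteq\psi[\mcar^\infty]$, et la compensation exacte du facteur $e_\qcar(L/E)$ dans la somme $\sum_{w|v}\deg w$), ce qui est tout à fait conforme au texte.
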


\begin{proof}
 Cette preuve est similaire \`a celle de la proposition \ref{propextBnonrami} \`a la diff\'erence qu'on utilise
 la proposition \ref{propclefextBrami} \`a la place de la proposition \ref{propclefextBnonrami}.
 Soit $\alpha\in L$ tel que $\alpha-\eta(\alpha)\notin\psi[\mcar^\infty]$.
 Alors, en posant:
 \[l:=\left(2\,\gamma(\psi)\,[K:k]+1\right)[E_\qcar:K_\pcar]-1\text{,}\]
 on a:
 \[\psi_{m^{\nu (l+1)}}(\alpha)-\psi_{m^{\nu (l+1)}}(\eta(\alpha))\neq0\text{.}\]
 On peut donc appliquer la formule du produit (\cf th. $3$ du \S$3$ du chap. \uppercase\expandafter{\romannumeral 12} de \cite{ARTIN}), \ie:
 \[0=\sum_{w\in\mathcal{M}_L}\deg w\,\log_q\left|\psi_{m^{\nu (l+1)}}(\alpha)-\psi_{m^{\nu (l+1)}}(\eta(\alpha))\right|_w\text{.}\]
 Ainsi, en appliquant la proposition \ref{propclefextBrami} et le lemme \ref{accconv},
 on obtient:
\begin{multline}\label{eqFdPramiB}
  0\leq\sum_{w|v}\deg w\,\left(-(l+1)\,e_\qcar(L/E)\right) \\
   +\sum_{w\in\mathcal{M}_L} \deg w\,\log_q\max\!\left\{1,\left|\psi_{m^{\nu (l+1)}}(\alpha)\right|_w\right\} \\
   +\sum_{w\in\mathcal{M}_L} \deg w\,
    \log_q\max\!\left\{1,\left|\psi_{m^{\nu(l+1)}}(\eta(\alpha))\right|_w\right\}\text{.}
\end{multline}
 Comme dans la preuve de la proposition \ref{propextBnonrami}, on a:
 \begin{multline*}
  \sum_{w|v}(l+1)\deg w\,e_\qcar(L/E)=\sum_{u|v}(l+1) [L:E]\,\deg u \\
   \geq\left(2\,\gamma(\psi)\,[K:k]+1\right)\,[L:K]\,\degOK\pcar \text{.}
 \end{multline*}
 Donc, en divisant les membres de l'in\'egalit\'e (\ref{eqFdPramiB}) par $[L:k]$, on obtient:
 \[0\!\leq\!-\frac{\left(2\,\gamma(\psi)\,[K:k]+1\right)\,\degOK\pcar}{[K:k]}+\hw\!\left(\psi_{m^{\nu (l+1)}}(\alpha)\right)
  +\hw\!\left(\psi_{m^{\nu (l+1)}}\!\left(\eta(\alpha)\right)\right)\text{,}\]
 d'o\`u
\begin{multline*}
 \frac{\left(2\,\gamma(\psi)\,[K:k]+1\right)\,\degOK\pcar}{[K:k]}
  \leq2\,\gamma(\psi)+2\,\hcan_\psi\!\left(\psi_{m^{\nu (l+1)}}(\alpha)\right) \\
  =2\,\gamma(\psi)+2\,q^{n\,(l+1)}\,\hcan_\psi(\alpha)\text{.}
\end{multline*}
 Ainsi, on obtient:
 \[\frac{1}{[K:k]}\leq2\,q^{n\,(l+1)}\,\hcan_\psi(\alpha)\text{,}\]
 ce qui ach\`eve la preuve car
 \[n\,(l+1)\leq3\,\nbclass\,\dinfprime\,(q^\dinfprime-1)\,\gamma(\psi)\,\deg v\,d_1^2\,[K:k]\text{.}\]
\end{proof}

Nous pouvons regrouper les deux propositions pr\'ec\'edentes en un seul th\'eor\`eme qui fournit
une minoration conditionnelle de la hauteur normalis\'ee.

\begin{theo}[de minoration conditionnelle]\label{thBminorcondi}
 Soient $F/k$ une extension CM finie
 et $\psi$ un $\OF$-module de Drinfeld de signe normalis\'e.
 Soient $K/k$ une extension finie telle que~$\psi$ soit d\'efini sur $K$,
 $L/K$ une extension galoisienne finie de groupe de Galois $G$,
 $H$ un sous-groupe du centre de~$G$ et $E\subseteq L$ le sous-corps fix\'e par $H$.
 Soient $d_1>0$ un entier et $v$ une place finie de $K$.
 On suppose que pour toute place $w|v$ de $E$, on ait $\left[E_w:K_v\right]\leq d_1$.
 Soient $\pcar$ l'id\'eal premier de $\OK$ associ\'e \`a la place $v$
 et $\mcar:=\pcar\cap\OF$ l'id\'eal premier de~$\OF$ en-dessous de~$\pcar$.
 Soit $\alpha\in L$ non de torsion pour $\psi$.
 On suppose que:
 \begin{equation}\label{cond1B}
  \forall\eta\in\Gal(L/E),\quad\eta(\alpha)-\alpha\notin\psi[\mcar^\infty]\setminus\{0\}\text{.}
 \end{equation}
 Alors:
 \[\hcan_\psi(\alpha)\geq\frac{q^{-c_3\,\deg v\,d_1^2\,[K:k]}}{2\,[K:k]}\text{.}\]
\end{theo}

\begin{proof}
 Soit $\alpha\notin\psi(L)_\tors$ un \'el\'ement de $L$ qui v\'erifie la condition (\ref{cond1B}).
 Soit $I$ le groupe de Galois de $L/E(\alpha)$. 
 Comme $H:=\Gal(L/E)$ est un sous-groupe du centre de $G$, $H$ est ab\'elien et est un sous-groupe normal de~$G$.
 De m\^eme, comme~$I$ est un sous-groupe de $H$, $I$ est ab\'elien et est un sous-groupe normal de $G$.
 Ainsi, $H/I$ est un sous-groupe normal de~$G/I$ et:
 \[(G/I)\big/(H/I)\simeq G/H\text{.}\]
 De plus, comme $H$ est inclus dans le centre de $G$, par passage au quotient $H/I$ est un sous-groupe du centre de $G/I$.
 Or $G/I\simeq\Gal(E(\alpha)/K)$ et $H/I\simeq\Gal(E(\alpha)/E)$.
 On peut donc supposer que $L:=E(\alpha)$ ce qu'on fera d\'esormais.
 On illustre la tour d'extensions que nous venons de d\'ecrire par le diagramme suivant:
 \[\xymatrix@-2ex{
  L \\
  E(\alpha) \ar@{-}[u]_I \\
  E \ar@{-}[u]_{H/I} \ar@/_3pc/@{--}[uu]_{H<\Zcal(G)} \\
  K \ar@{-}[u] \ar@/^2pc/@{--}[uu]^{G/I} \ar@/^5pc/@{--}[uuu]^{G}
 }\]
 Soit $\qcar$ un id\'eal premier de $\Oe$ au-dessus de $\pcar$.
 \begin{enumerate}
  \item Supposons que l'extension $E(\alpha)/E$ soit non ramifi\'ee en $\qcar$, alors, d'apr\`es la proposition \ref{propextBnonrami}, on a:
   \[\hcan_\psi(\alpha)\geq\frac{q^{-c_3\,\deg v\,d_1^2\,[K:k]}}{2\,[K:k]}\text{.}\]
  \item Supposons maintenant que l'extension $E(\alpha)/E$ soit ramifi\'ee en $\qcar$.
   Soit $\eta\in H_\qcar\setminus\{\id\}$
   (on rappelle que~$\Hqcar$ est non trivial d'apr\`es la proposition \ref{grpHpnontrivialB}).
   Comme $E=L(\alpha)$, on a $\eta(\alpha)-\alpha\neq0$.
   Alors d'apr\`es la condition (\ref{cond1B}), on a
   ${\eta(\alpha)-\alpha\notin\psi[\mathfrak{m}^\infty]}$.
   On peut donc appliquer la proposition \ref{propextBrami}, d'o\`u:
   \[\hcan_\psi(\alpha)\geq\frac{q^{-c_3\,\deg v\,d_1^2\,[K:k]}}{2\,[K:k]}\text{.}\]
 \end{enumerate}
 On obtient ainsi le r\'esultat annonc\'e.
\end{proof}

\subsection{Principe des tiroirs dans $(\OF/\mathfrak{m})^{\times}$}\label{principtiroirsBannquotient}
\leavevmode\par

Le but de ce paragraphe est d'\'etablir la proposition \ref{lemssgrpeltborne}
qui nous permettra au \S\ref{demothpsifinie} d'achever la preuve du th\'eor\`eme \ref{thlehmerBpsifini}.

Soient $F/k$ une extension finie de degr\'e $r$, $\OF$ la fermeture int\'egrale de~$A$ dans~$F$,
$\ncar$\label{notancar} un id\'eal non nul et principal de $\OF$
pour lequel on fixe un g\'en\'erateur $m\in\OF$. 
Soient $\Rcal_0$\label{notaRcal0} un syst\`eme de repr\'esentants de $\OF/\ncar$
et $e>0$\label{notae} un entier.
Pour tout $a\in\OF$, on note $\abar$\label{notaabar} l'image canonique de $a$ dans~$\OF/\ncar^e$.

Le lemme suivant permet de construire un syst\`eme explicite de repr\'esentants de $\OF/\ncar^e$
\`a partir du syst\`eme $\Rcal_0$ de repr\'esentants de $\OF/\ncar$.

\begin{lemm}\label{lembaseev}
 L'ensemble \label{notaRcal}$\displaystyle{\Rcal:=\!\left\{\sum_{i=0}^{e-1}\lambda_i\,m^i\,\big|\,\lambda_i\in\Rcal_0\!\right\}}$
 est un syst\`eme de repr\'esentants de $\OF/\ncar^e$.
\end{lemm}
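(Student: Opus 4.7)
Le plan est d'\'etablir, pour l'application naturelle
\[\Rcal_0^e \longrightarrow \OF/\ncar^e,\quad (\lambda_0,\dots,\lambda_{e-1})\longmapsto\sum_{i=0}^{e-1}\lambda_i\,m^i\mod\ncar^e\text{,}\]
d'abord son injectivit\'e, puis le fait que source et but ont m\^eme cardinal fini. Ces deux points combin\'es suffiront \`a garantir que $\Rcal$ est un syst\`eme de repr\'esentants de $\OF/\ncar^e$.

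Pour l'injectivit\'e, je proc\'ederais par r\'ecurrence sur $e$. Supposons que ${\sum\lambda_i m^i\equiv\sum\lambda'_i m^i\pmod{\ncar^e}}$. En r\'eduisant modulo $\ncar=m\OF$, on obtient $\lambda_0\equiv\lambda'_0\pmod\ncar$, donc $\lambda_0=\lambda'_0$ puisque $\Rcal_0$ est par hypoth\`ese un syst\`eme de repr\'esentants de~$\OF/\ncar$. L'\'egalit\'e restante s'\'ecrit $m\sum_{i=1}^{e-1}(\lambda_i-\lambda'_i)m^{i-1}\in m^e\OF$; comme $\OF$ est int\`egre (sous-anneau du corps $F$) et $m\neq0$, on peut simplifier par~$m$ et appliquer l'hypoth\`ese de r\'ecurrence \`a l'exposant $e-1$.

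Pour le cardinal, j'utiliserais la filtration $\OF\supset\ncar\supset\cdots\supset\ncar^e$: la multiplication par $m^i$ induit par passage au quotient un isomorphisme $\OF/\ncar\stackrel{\sim}{\longrightarrow}\ncar^i/\ncar^{i+1}$ (surjection par principalit\'e de $\ncar=m\OF$, injection par int\'egrit\'e de~$\OF$). Le produit des cardinaux des $e$ quotients successifs donne alors $\#(\OF/\ncar^e)=\#(\OF/\ncar)^e=\#\Rcal_0^e$, ce qui combin\'e \`a l'injectivit\'e \'etablie ci-dessus entra\^ine la bijectivit\'e et donc le r\'esultat.

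Aucun obstacle s\'erieux n'est \`a pr\'evoir: l'argument se r\'eduit \`a une r\'ecurrence standard et exploite de mani\`ere essentielle la principalit\'e de $\ncar$ (via le g\'en\'erateur fix\'e $m$) ainsi que l'int\'egrit\'e de $\OF$ pour l\'egitimer les simplifications successives par $m$.
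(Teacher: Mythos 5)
Votre preuve est correcte et suit pour l'essentiel la m\^eme strat\'egie que celle du texte: on \'etablit que les \'el\'ements de $\Rcal$ sont deux-\`a-deux distincts modulo $\ncar^e$ (votre r\'ecurrence sur $e$ avec simplification par $m$ est la version r\'ecursive de la d\'eduction successive $\lambda_0=\mu_0,\dots,\lambda_{e-1}=\mu_{e-1}$ du texte), puis on conclut par \'egalit\'e des cardinaux. La seule diff\'erence est cosm\'etique: le texte obtient $\#(\OF/\ncar^e)=\#(\OF/\ncar)^e$ via la multiplicativit\'e $\deg_{\OF}\ncar^e=e\deg_{\OF}\ncar$, tandis que vous la red\'emontrez directement par la filtration $\ncar^i/\ncar^{i+1}\simeq\OF/\ncar$.
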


\begin{proof}
 Remarquons tout d'abord que:
 \[\#\!\left(\mathcal{R}_0\right)^e=\#\!\left(\mathcal{O}_F/\mathfrak{n}\right)^e
  =q^{e\deg_{\mathcal{O}_F}\mathfrak{n}}=q^{\deg_{\mathcal{O}_F}\mathfrak{n}^e}=\#\!\left(\mathcal{O}_F/\mathfrak{n}^e\right)\text{.}\]
 Il suffit donc de montrer que les \'el\'ements de $\Rcal$ 
 sont deux-\`a-deux distincts modulo $\ncar^e$.
 Soient $\lambda_0,\dots,\lambda_{e-1},\mu_0,\dots,\mu_{e-1}\in\mathcal{R}_0$.
 Montrons donc que si:
 \[\sum_{i=0}^{e-1}\lambda_i\,m^i\equiv\sum_{i=0}^{e-1}\mu_i\,m^i\!\!\!\!\mod\ncar^e\text{,}\]
 alors, pour tout $i\in\{0,\dots,e-1\}$, on a $\lambda_i=\mu_i$. En effet, pour tout $j\in\{1,\dots,e-1\}$, on a:
 \[\sum_{i=0}^{e-1}\lambda_i\,m^i\equiv\sum_{i=0}^{e-1}\mu_i\,m^i\!\!\!\!\mod\mathfrak{n}^j
  \Longleftrightarrow\sum_{i=0}^{j-1}\lambda_i\,m^i\equiv\sum_{i=0}^{j-1}\mu_i\,m^i\!\!\!\!\mod\mathfrak{n}^j\text{.}\]
 On en d\'eduit successivement $\lambda_0=\mu_0, \lambda_1=\mu_1, \dots, \lambda_{e-1}=\mu_{e-1}$.
\end{proof}

Soit $N$\label{notaN} le nombre d'\'el\'ements de $\mathcal{O}_F$ de degr\'e (en $\mathcal{O}_F$) $<2$
(qui est fini d'apr\`es le lemme $5.6$ de \cite{HAYES}).
La proposition suivante, qui repose sur le principe des tiroirs,
est un des ingr\'edients clefs qui nous permettra d'\'eliminer l'hypoth\`ese technique (\ref{cond1B}) p. \pageref{cond1B}.

\begin{prop}\label{propssgrpR}
 Soient $B>1$\label{notaB} un entier
 et $H$\label{notaH2} un sous-groupe
 de $(\OF/\ncar^e)^{\times}$\label{notaOFsurncare} d'indice $<B$.
 On suppose que $e\geq\log_q(2\,N B)$.
 Alors, il existe deux \'el\'ements $a$ et $b$ de $\OF$ tels que $\abar\in H$, $\bbar\in H$ et:
 \[2<q^{\degOF(b-a)}<2\,N B\,q^{\maxN}\]
 o\`u $\maxN:=\max_{\lambda\in\Rcal_0}\{\degOF\lambda\}$\label{notamaxN}. 
\end{prop}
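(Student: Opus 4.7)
Le plan est d'appliquer le principe des tiroirs \`a un rel\`evement canonique de $H$ dans $\OF$, r\'ealis\'e via le syst\`eme de repr\'esentants $\Rcal$ du lemme \ref{lembaseev}. Plus pr\'ecis\'ement, je rel\`everais chaque \'el\'ement de $H\subset(\OF/\ncar^e)^{\times}$ en son unique repr\'esentant dans $\Rcal$, obtenant ainsi un ensemble $\widetilde{H}\subset\OF$ de $\#H$ \'el\'ements, chacun s'\'ecrivant $\sum_{i=0}^{e-1}\lambda_i m^i$ avec $\lambda_i\in\Rcal_0$ et v\'erifiant par construction $\overline{a}\in H$ pour tout $a\in\widetilde{H}$.

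L'\'etape suivante est de partitionner $\widetilde{H}$ selon ses \og coefficients de t\^ete \fg{} $(\lambda_{e-k},\ldots,\lambda_{e-1})$ pour un entier $k\geq 0$ \`a calibrer, le nombre de classes \'etant major\'e par $q^{k\deg_{\OF}\ncar}$. En utilisant l'estimation classique $\#(\OF/\ncar^e)^{\times}\geq q^{(e-1)\deg_{\OF}\ncar}(q^{\deg_{\OF}\ncar}-1)$ et la borne d'indice $\#H>\#(\OF/\ncar^e)^{\times}/B$, l'hypoth\`ese $e\geq\log_q(2NB)$ me permettrait de choisir $k$ tel que $\#\widetilde{H}>N\,q^{k\deg_{\OF}\ncar}$. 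Le principe des tiroirs fournirait alors une classe contenant plus de $N$ \'el\'ements~; puisque $\OF$ poss\`ede exactement $N$ \'el\'ements de degr\'e $<2$, on pourrait y s\'electionner deux \'el\'ements distincts $a,b$ avec $\deg_{\OF}(b-a)\geq 2$, d'o\`u $q^{\deg_{\OF}(b-a)}\geq q^2>2$, et bien s\^ur $\overline{a},\overline{b}\in H$.

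Pour la majoration, l'annulation des coefficients de t\^ete donne $b-a=\sum_{i<e-k}(\lambda_i^{(b)}-\lambda_i^{(a)})m^i$. Comme $F$ est de type CM (une unique place au-dessus de $\infty$), la valuation $-v_{\infty'}$ fait de $\deg_{\OF}$ une quantit\'e non-archim\'edienne, d'o\`u
\[
\deg_{\OF}(b-a)\leq\maxN+(e-k-1)\deg_{\OF}\ncar\text{.}
\]
Le choix de $k$ effectu\'e \`a l'\'etape pr\'ec\'edente impose \'egalement $(e-k-1)\deg_{\OF}\ncar<\log_q(2NB)$, ce qui fournit la majoration $q^{\deg_{\OF}(b-a)}<2NB\,q^{\maxN}$ recherch\'ee.

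L'obstacle principal sera pr\'ecis\'ement cette calibration simultan\'ee de~$k$~: il doit \^etre assez petit pour que le principe des tiroirs garantisse plus de $N$ \'el\'ements par classe, mais assez grand pour que la borne r\'esultante sur $\deg_{\OF}(b-a)$ respecte le seuil $\log_q(2NB)+\maxN$. C'est pr\'ecis\'ement l'hypoth\`ese $e\geq\log_q(2NB)$ qui rend ces deux contraintes simultan\'ement r\'ealisables.
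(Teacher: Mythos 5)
Votre esquisse est correcte et suit essentiellement la d\'emonstration du texte : votre partition de $\widetilde{H}$ selon les $k$ coefficients de t\^ete co\"incide (avec $k=e-c$ o\`u $c:=\lceil\log_q(2NB)/\degOF\ncar\rceil$) avec la d\'ecomposition $\Rcal=\bigsqcup_{\alpha\in\Gamma}I_\alpha$ du texte, et les deux contraintes que vous signalez sur $k$ sont exactement celles que le choix de $c$ par partie enti\`ere sup\'erieure rend compatibles. Le comptage via $\#\bigl((\OF/\ncar^e)^{\times}\bigr)\geq\tfrac{1}{2}\,\#\bigl(\OF/\ncar^e\bigr)$, la s\'election de $a,b$ avec $\degOF(b-a)\geq2$ gr\^ace aux $N$ \'el\'ements de petit degr\'e, et la majoration ultram\'etrique de $\degOF(b-a)$ sont identiques \`a l'argument du texte.
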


\begin{proof}
 D'apr\`es le lemme~\ref{lembaseev}, l'ensemble:
 \[\mathcal{R}:=\left\{\sum_{i=0}^{e-1}\lambda_i\,m^i\,\Big|\,\lambda_i\in\mathcal{R}_0\right\}\]
 est un syst\`eme de repr\'esentants des classes d'\'equivalence de $\mathcal{O}_F/\mathfrak{n}^e$. 

 \noindent Posons:
 \[\label{notac}
  c:=\left\lceil\frac{\log_q(2\,N B)}{\deg_{\mathcal{O}_F}\mathfrak{n}}\right\rceil\text{.}
 \]
 Soit
 \[\label{notaGamma}
  \Gamma:=\left\{\sum_{i=c}^{e-1}\lambda_i\,m^i\,\big|\,\lambda_i\in\mathcal{R}_0\right\}\text{.}
 \]
 Pour tout $\alpha\in \Gamma$, d\'efinissons
 \[\label{notaIalpha}
  I_\alpha:=\left\{\sum_{i=0}^{c-1}\lambda_i\,m^i+\alpha\,\big|\,\lambda_i\in\mathcal{R}_0\right\}\text{.}
 \]
 Ainsi, $\mathcal{R}$ est la r\'eunion disjointe de tous les $I_\alpha$ pour $\alpha$ parcourant $\Gamma$. 
 Soit 
 \[\label{notaLambda}
  \Lambda:=\left\{a\in\mathcal{O}_F\,\big|\,\abar\in H\right\}\text{.}
 \]
 Alors, $H$ est en bijection avec $\Lambda\cap\mathcal{R}$ qui est la r\'eunion disjointe des $\Lambda\cap I_\alpha$ pour $\alpha\in\Gamma$.
 Supposons que pour tout $\alpha\in \Gamma$, on ait $\#\!\left(\Lambda\cap I_\alpha\right)\leq N$.
 Alors:
 \[\#H=\#\!\left(\Lambda\cap\mathcal{R}\right)=\sum_{\alpha\in \Gamma}\#\!\left(\Lambda\cap I_\alpha\right)\leq N\,\#\Gamma\text{.}\]
 Or, $\#\Gamma=q^{(e-c)\deg_{\mathcal{O}_F}\mathfrak{n}}$ (car $e\geq\log_q(2\,N B)\geq c$), d'o\`u:
 \[\#H\leq N\,q^{(e-c)\deg_{\mathcal{O}_F}\mathfrak{n}}
  =N\,\frac{\#\!\left(\mathcal{O}_F/\mathfrak{n}^e\right)}{\#\!\left(\mathcal{O}_F/\mathfrak{n}^c\right)}\text{.}\]
 Ainsi, 
\begin{multline*}
 B>\left[(\mathcal{O}_F/\mathfrak{n}^e)^{\times}:H\right]
  =\frac{\#\!\left((\mathcal{O}_F/\mathfrak{n}^e)^{\times}\right)}{\#(H)} \\
  \geq\frac{\#\!\left(\mathcal{O}_F/\mathfrak{n}^c\right)}{N}
  \,\frac{\#\!\left((\mathcal{O}_F/\mathfrak{n}^e)^{\times}\right)}{\#\!\left(\mathcal{O}_F/\mathfrak{n}^e\right)}
  \geq\frac{1}{2}\,\frac{\#\!\left(\mathcal{O}_F/\mathfrak{n}^c\right)}{N}
\end{multline*}
 car, d'apr\`es la proposition $1.6$ de \cite{ROSEN}, on a:
 \[\#\!\left((\mathcal{O}_F/\mathfrak{n}^e)^{\times}\right)=\#\!\left(\OF/\ncar\right)^e\,\left(1-\frac{1}{\#\!\left(\OF/\ncar\right)}\right)
  \geq \frac{1}{2}\,\#\!\left(\mathcal{O}_F/\mathfrak{n}^e\right)\text{.}\]
 On obtient donc $2\,N\,B>q^{c\deg_{\mathcal{O}_F}\mathfrak{n}}$, ce qui contredit le choix de $c$.
 Il existe alors $\alpha\in \Gamma$ tel que $\#\!\left(\Lambda\cap I_\alpha\right)\geq N+1$.
 On peut donc trouver $a$ et~$b$ distincts dans $\Lambda\cap I_\alpha$
 tels que $\deg_{\mathcal{O}_F}(b-a)\geq2$.
 En effet, si $a$ est fix\'e dans $\Lambda\cap I_\alpha$, le cardinal de l'ensemble
 $\left\{b-a\,\big|\,b\in\Lambda\cap I_\alpha\right\}$ est sup\'erieur ou \'egal \`a $N+1$.
 Or, il n'y a que~$N$ \'el\'ements de $\OF$ de degr\'e $\leq1$.
 Donc il existe bien $b\in\Lambda\cap I_\alpha$ tel que $\deg_{\mathcal{O}_F}(b-a)\geq2$.
 Et comme $a,b\in I_\alpha$, on a \'egalement:
 \[\deg_{\mathcal{O}_F}(b-a)\leq (c-1)\deg_{\mathcal{O}_F}\mathfrak{n}+\maxN\text{.}\]
\end{proof}

On suppose de plus que l'extension $F/k$ est CM.
Soient $\varphi$ un $\OF$-module de Drinfeld de rang $r$, $\Ftilde:=F\cap\Fbar_q$ le corps des constantes de $F$,
$\cardFtilde:=\log_q\#\Ftilde$ (\ie $\Ftilde=\F_{q^\cardFtilde}$) et $\nbclass$ le nombre de classes de $\OF$.
Soient $\infty'$ l'unique place de $F$ au-dessus de $\infty$ et $d:=\deg\infty'$ le degr\'e de $\infty'$ sur~$\F_q$.
Soit~$\mcar$ un id\'eal premier non nul de $\OF$, alors $\mcar^\nbclass$ est un id\'eal principal
dont on note $m$ un g\'en\'erateur.
La proposition suivante est un raffinement du r\'esultat pr\'ec\'edent 
(on rappelle que la notation $\mu_{\varphi}$ est d\'efinie au \S\ref{subsectionThHayes}).

\begin{prop}\label{lemssgrpeltborne}
 Soient $B>1$ un entier
 et $H$ un sous-groupe de $(\OF/\mcar^{\nbclass e})^{\times}$ d'indice $<B$.
 On suppose que $e\geq\log_q(2\,B\,q^{\cardFtilde\,(d+1)})$. 
 Alors, il existe deux \'el\'ements $a$ et $b$ de $\OF$ tels que $\abar\in H$, $\bbar\in H$,
 $\degOF a=\degOF b$, $\mu_\varphi(a)=\mu_\varphi(b)$ et:
 \[2<q^{\deg_{\mathcal{O}_F}(b-a)}<2\,B\,q^{\cardFtilde\,(d+1)+\maxR}\]
 o\`u $\maxR:=\max_{\lambda\in\mathcal{R}_0}\{\deg_{\mathcal{O}_F}\lambda\}$. 
\end{prop}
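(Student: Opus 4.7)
The plan is to refine the pigeonhole argument of Proposition \ref{propssgrpR} by a further grouping via the invariant $\chi(a) := (\degOF a, \mu_\varphi(a))$. I reuse the system $\Rcal$ of representatives of $\OF/\mcar^{\nbclass e}$ furnished by Lemma \ref{lembaseev} (applied with $\ncar := \mcar^\nbclass$ and generator $m$) together with the decomposition $\Rcal = \bigsqcup_{\alpha \in \Gamma} I_\alpha$ from the proof of Proposition \ref{propssgrpR}, but I take as threshold
\[
c := \left\lceil \frac{\log_q(2B\,q^{\cardFtilde(d+1)})}{\degOF \mcar^\nbclass} \right\rceil,
\]
and put $\Lambda := \{a \in \OF : \abar \in H\}$. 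The key structural observation is that if $a \neq b$ with $\chi(a) = \chi(b)$, then the leading $\tau$-coefficients of $\varphi_a$ and $\varphi_b$ coincide, so they cancel in $\varphi_{b-a} = \varphi_b - \varphi_a$, forcing $\degOF(b-a) < \degOF a$; conversely, any perturbation of a fixed $a_0$ by an element of strictly smaller $\degOF$ preserves both coordinates of $\chi$, thanks to the multiplicativity (\ref{eqrelmupsiab}). Hence the $\chi$-fibers are cosets of additive subgroups of the form $\{c \in \OF : \degOF c < \delta\}$.

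The argument proceeds by contradiction: assume that for every $\alpha \in \Gamma$, the restriction $\chi|_{\Lambda \cap I_\alpha}$ is injective. Then $\#(\Lambda \cap I_\alpha) \leq \#\chi(I_\alpha)$, and the decisive quantitative input, which I comment on in the obstacle below, is the uniform bound $\#\chi(I_\alpha) \leq q^{\cardFtilde(d+1)}$. Summing over $\Gamma$ yields $\#H = \#(\Lambda \cap \Rcal) \leq q^{\cardFtilde(d+1)} \cdot \#\Gamma$, and repeating the end of the proof of Proposition \ref{propssgrpR} (using $\#((\OF/\mcar^{\nbclass e})^{\times}) \geq \frac{1}{2}\, \#(\OF/\mcar^{\nbclass e})$) produces the inequality $q^{c\,\degOF \mcar^\nbclass} < 2B\,q^{\cardFtilde(d+1)}$, contradicting the choice of $c$. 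Hence some box $I_\alpha$ and some $\chi$-fiber contain two distinct elements $a, b \in \Lambda$. By construction $\abar, \bbar \in H$, $\degOF a = \degOF b$, $\mu_\varphi(a) = \mu_\varphi(b)$, and since $a, b$ lie in the same box, $\degOF(b-a) \leq (c-1)\degOF \mcar^\nbclass + \maxR$, which gives the upper bound $q^{\degOF(b-a)} < 2B\,q^{\cardFtilde(d+1)+\maxR}$. The lower bound $q^{\degOF(b-a)} > 2$ is obtained by inflating the target count once more to absorb the finitely many small-degree differences, in the spirit of the role played by $N$ in Proposition \ref{propssgrpR}.

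The main obstacle is to establish the uniform bound $\#\chi(I_\alpha) \leq q^{\cardFtilde(d+1)}$. I plan to use the extension of $\mu_\varphi$ to $F_{\infty'}$ and the multiplicativity (\ref{eqrelmupsiab}): the $\mu_\varphi$-values of elements of $\OF$ are controlled by the leading behavior in $F_{\infty'}$ and live, up to a twist by $q^{\degOF(\cdot)}$, in a subset of the residue field $\FFinf \simeq \F_{q^d}$ at $\infty'$, which yields at most $q^{\cardFtilde d}$ possible $\mu$-values once the field of constants $\Ftilde = \F_{q^\cardFtilde}$ of $F$ is fixed. The supplementary factor $q^\cardFtilde$ accommodates the possible values of $\degOF$ encountered within a single box $I_\alpha$, using a careful initial choice of $\Rcal_0$ (for instance, consisting of representatives of degree $< \degOF\mcar^\nbclass$). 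The product of these two counts gives the target $q^{\cardFtilde(d+1)}$.
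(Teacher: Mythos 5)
Your reduction of the problem to the uniform bound $\#\chi(I_\alpha)\leq q^{\cardFtilde\,(d+1)}$ is exactly where the argument breaks, and that bound is false. Take $\alpha=0$ (or any $\alpha\in\Gamma$ of degree at most $(c-1)\degOF\ncar+\maxR$, where $\ncar=\mcar^{\nbclass}$): the box $I_0=\bigl\{\sum_{i=0}^{c-1}\lambda_i\,m^i\bigr\}$ contains the elements $m^0,m^1,\dots,m^{c-1}$, of degrees $0,\degOF\ncar,\dots,(c-1)\degOF\ncar$, so already the first coordinate of $\chi$ takes at least $c$ distinct values on $I_0$; since $c$ grows like $\log_q B$, no bound depending only on $F$ can hold uniformly in $B$. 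Your heuristic for the second coordinate is also unavailable: the proposition concerns an arbitrary $\OF$-module $\varphi$ of rank $r$, not a sign-normalized one, so $\mu_\varphi$ takes values in $\Cinf^*$ and there is no restriction to $\FFinf\simeq\F_{q^d}$ (relation (\ref{eqmupsidansFqdstar}) is special to the sign-normalized case). Note finally that the factor $q^{\cardFtilde\,(d+1)}$ in the statement has nothing to do with counting $\chi$-fibers: it is simply a bound for the number $N$ of elements of $\OF$ of degree $<2$, which enters only to secure the lower bound $q^{\degOF(b-a)}>2$.

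The missing idea is that one should not build the constraints $\degOF a=\degOF b$ and $\mu_\varphi(a)=\mu_\varphi(b)$ into the pigeonhole at all. Apply Proposition \ref{propssgrpR} as it stands (with $N\leq q^{\cardFtilde\,(d+1)}$) to get $a',b'$ with $\overline{a'},\overline{b'}\in H$ and $2<q^{\degOF(b'-a')}<2\,B\,q^{\cardFtilde\,(d+1)+\maxR}$, then translate: choose $\lambda\in\OF$ with $\degOF\lambda>\max\{\degOF a',\degOF b'\}$ and set $a:=a'+\lambda\,m^e$, $b:=b'+\lambda\,m^e$. The translation does not change the classes modulo $\mcar^{\nbclass e}$, it forces $\degOF a=\degOF b=\degOF\lambda+e\degOF m>\degOF(b-a)=\degOF(b'-a')$, and then your own structural observation ($\varphi_{b-a}=\varphi_b-\varphi_a$ with $\deg_\tau\varphi_{b-a}<\deg_\tau\varphi_a$, so the leading coefficients of $\varphi_a$ and $\varphi_b$ must agree) yields $\mu_\varphi(a)=\mu_\varphi(b)$ for free. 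This is the paper's proof; your key observation is used there in exactly the direction you state it, but only after the translation has normalized the degrees.
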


\begin{proof}
 D'apr\`es la proposition $1.4.9$ de \cite{STICHT},
 le nombre $N$ d'\'el\'ements de $\OF$ de degr\'e $<2$ v\'erifie $N\leq q^{\cardFtilde\,(d+1)}$.
 
 
 D'apr\`es la proposition \ref{propssgrpR}, il existe $a'$ et $b'$ dans $\OF$ tels que
 $\overline{a'}\in H$, $\overline{b'}\in H$ et:
 \[2<q^{\deg_{\mathcal{O}_F}(b'-a')}<2\,B\,q^{\cardFtilde\,(d+1)+\maxR}\text{.}\]
 Soit $\lambda\in\mathcal{O}_F$ de degr\'e suffisamment grand, \ie:
 \[\degOF\lambda>\max\!\left\{\degOF a',\degOF b'\right\}\text{.}\]
 Posons $a=a'+\lambda\,m^e$ et $b=b'+\lambda\,m^e$, alors $\abar\in H$, $\bbar\in H$,
 \[\deg_{\mathcal{O}_F} b=\deg_{\mathcal{O}_F} a=\deg_{\mathcal{O}_F}\lambda+e\,\deg_{\mathcal{O}_F} m\]
 et
 \[\deg_{\mathcal{O}_F}(b-a)=\deg_{\mathcal{O}_F}(b'-a')<\deg_{\mathcal{O}_F} a\text{,}\]
 d'o\`u 
 \[\deg_\tau\varphi_a=r\,\deg_{\mathcal{O}_F} a>r\,\deg_{\mathcal{O}_F}(b-a)=\deg_\tau\varphi_{b-a}\text{.}\]
 Or $\varphi_{b-a}=\varphi_b-\varphi_a$, donc $\mu_\varphi(a)=\mu_\varphi(b)$.
\end{proof}

Le\ \,lemme\ \,suivant\ \,\'etablit\ \,une\ \,majoration\ \,de la constante $\maxR$ 
qui appara\^it dans la proposition \ref{lemssgrpeltborne} en fonction d'une base fix\'ee du $A$-module~$\OF$. 

\begin{lemm}\label{systrepborne}
 Soit $(e_1,\dots,e_r)$\label{notae1er} une base de $\OF$ en tant que $A$-mo\-dule.
 Il existe un syst\`eme de repr\'esentants $\mathcal{R}_0$ de $\OF/\mathfrak{m}^\nbclass$ tel que:
 \[
  \maxR:=\max_{\lambda\in\mathcal{R}_0}\{\deg_{\OF}\lambda\}\leq\maxB+\nbclass\,r\,\deg_{\OF}\mathfrak{m}
 \]
 o\`u $\maxB:=\max_{1\leq i\leq r}\{\deg_{\OF} e_i\}$\label{notamaxB}.
\end{lemm}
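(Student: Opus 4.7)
Le plan est de construire $\Rcal_0$ en r\'eduisant les \'el\'ements de $\OF$ modulo la norme $N := \norme_{F/k}(m) \in A$ plut\^ot que directement modulo $m$. L'\'etape cl\'e est l'inclusion $N\OF \subseteq m\OF$: soit $\chi(X) = X^r + c_{r-1}X^{r-1} + \cdots + c_0 \in A[X]$ le polyn\^ome caract\'eristique de la multiplication par $m$ sur le $A$-module libre $\OF$; alors $N = (-1)^r c_0$ et le th\'eor\`eme de Cayley-Hamilton donne $\chi(m) = 0$, d'o\`u
\[
c_0 = -m\,(m^{r-1} + c_{r-1}\,m^{r-2} + \cdots + c_1)\text{,}
\]
le second facteur \'etant dans $A[m] \subseteq \OF$. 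Donc $N/m \in \OF$, ce qui fournit l'inclusion voulue.

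On utilise ensuite la relation $\deg_A N = \degOF m = \nbclass\,\degOF\mcar$ (qui r\'esulte du fait que le d\'eterminant de la multiplication par $m$ sur $\OF$ vaut $N$, et que son degr\'e en $T$ donne l'indice de $m\OF$ dans $\OF$). Pour chaque classe $\bar{x}\in \OF/m\OF$, on choisit un rel\`evement $x = \sum_{i=1}^r x_i e_i$ avec $x_i \in A$, on effectue la division euclidienne $x_i = N q_i + r_i$ dans $A$ avec $\deg_A r_i < \deg_A N$, et on pose $\lambda := \sum_{i=1}^r r_i e_i$. Comme $N\OF \subseteq m\OF$, l'\'el\'ement $\lambda$ repr\'esente la m\^eme classe que $x$; on choisit alors un tel $\lambda$ par classe pour former $\Rcal_0$.

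Pour la majoration, l'extension $F/k$ \'etant de type CM, $\degOF$ co\"incide sur $\OF\setminus\{0\}$ (au facteur $d := \deg\infty'$ pr\`es) avec $-\vinfprime$, et satisfait donc l'in\'egalit\'e ultram\'etrique pour l'addition. Par ailleurs, pour $r_i \in A$ on a $\OF/r_i\OF \simeq (A/r_iA)^r$, d'o\`u $\degOF r_i = r\,\deg_A r_i$. On en d\'eduit
\[
\degOF \lambda \leq \max_{1\leq i \leq r}(\degOF r_i + \degOF e_i) \leq r(\deg_A N - 1) + \maxB < \maxB + \nbclass\,r\,\degOF\mcar\text{,}
\]
ce qui est la majoration annonc\'ee. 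L'obstacle principal est l'\'etablissement de l'inclusion $N\OF \subseteq m\OF$ via Cayley-Hamilton; le reste rel\`eve de la division euclidienne dans $A$ et des propri\'et\'es standards de $\degOF$.
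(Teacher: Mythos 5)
Votre preuve est correcte et repose sur la m\^eme strat\'egie que celle de l'article: trouver un \'el\'ement de $A$ appartenant \`a $\mcar^\nbclass$, r\'eduire les coordonn\'ees dans la base $(e_1,\dots,e_r)$ par division euclidienne modulo cet \'el\'ement, puis borner $\degOF\lambda$ via l'in\'egalit\'e ultram\'etrique (qui requiert bien l'hypoth\`ese CM, point que vous avez le m\'erite d'expliciter alors que l'article l'utilise tacitement). La seule diff\'erence r\'eside dans le choix de l'\'el\'ement auxiliaire: l'article prend $m_0^\nbclass$, o\`u $m_0$ est le g\'en\'erateur unitaire de $\mcar\cap A$, ce qui rend imm\'ediate l'inclusion $m_0^\nbclass\OF\subseteq\mcar^\nbclass$ et donne m\^eme la borne l\'eg\`erement plus fine $\maxB+\nbclass\,r\,\deg_A m_0$; vous prenez $N=\norme_{F/k}(m)$, ce qui vous oblige \`a \'etablir $N\OF\subseteq m\OF$ par Cayley--Hamilton et \`a invoquer la formule $\deg_A N=\degOF m$ (forme normale de Smith), deux faits corrects mais qui alourdissent un peu l'argument par rapport \`a la simple observation $m_0\in\mcar$. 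Les deux chemins aboutissent \`a la majoration annonc\'ee.
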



\begin{proof}
 Soit $m_0\in A$ le g\'en\'erateur unitaire de l'id\'eal premier de $A$ en-dessous de $\mathfrak{m}$
 (\emph{i.e.} $(m_0)=A\cap\mathfrak{m}$).
 Comme $m_0\in\mathfrak{m}$, on a ${m_0^\nbclass\in\mcar^\nbclass}$, donc $\OF/(m_0^\nbclass\OF)$ se surjecte sur $\OF/\mathfrak{m}^\nbclass$.
 Ainsi, tout syst\`eme de repr\'esentants de $\OF/(m_0^\nbclass\OF)$ contient un syst\`eme de repr\'esentants de $\OF/\mathfrak{m}^\nbclass$.
 Or, l'ensemble $\left\{a\in A\,\big|\,\deg_A a<\deg_A m_0^\nbclass\right\}$ est un syst\`eme de repr\'esentants de $A/(m_0^\nbclass)$.
 Donc l'ensemble~$\mathcal{R}_0$ des \'el\'ements $\lambda\in\OF$ tel que $\lambda=a_1\,e_1+\cdots+a_r\,e_r$
 avec les $a_i\in A$ de degr\'e $<\deg_A m_0^\nbclass$ forme un syst\`eme de repr\'esentants de $\OF/(m_0^\nbclass\OF)$.
 De plus, pour tout $a\in A$, on a $\degOF a=r\,\degA a$
 car:
 \[\OF/(a\OF)\simeq\left(A/(a)\right)^r\text{.}\]
 Ainsi, pour tout $\lambda\in\mathcal{R}_0$, on a:
 \begin{eqnarray*}
  \degOF\lambda=\degOF\!\left(a_1\,e_1+\cdots+a_r\,e_r\right)\!\!\!\!
  & \leq & \!\!\!\!\max_{1\leq i\leq r}\{\degOF a_i\,e_i\} \\
  & \leq & \!\!\!\!\max_{1\leq i\leq r}\{\degOF e_i\}+\max_{1\leq i\leq r}\{\degOF a_i\} \\
  & \leq & \!\!\!\!\maxB+r\,\max_{1\leq i\leq r}\{\degA a_i\} \\
  & \leq & \!\!\!\!\maxB+r\,\degA m_0^\nbclass \\
  & \leq & \!\!\!\!\maxB+\nbclass\,r\,\degA m_0\text{.}
 \end{eqnarray*}
 Or, $\deg_A m_0\leq\deg_{\OF}\mathfrak{m}$ (car $\OF/\mathfrak{m}$ est une extension finie de $A/(m_0)$), ce qui ach\`eve la preuve. 
\end{proof}

\subsection{D\'emonstration du th\'eor\`eme \ref{thlehmerBpsifini}}\label{demothpsifinie}
\leavevmode\par

Nous sommes maintenant en mesure de d\'emontrer le th\'eor\`eme \ref{thlehmerBpsifini}.



\begin{proof}[D\'emonstration du th\'eor\`eme \ref{thlehmerBpsifini}]
 Rappelons que comme $\psi$ est de signe normalis\'e son corps des coefficients est~$H_F^+$ (\emph{cf.} \S\ref{subsectionThHayes}).
 Donc ${K\supset\HFplus\supset F}$.
 Conform\'ement aux notations de la page \pageref{notademo}, notons $\pcar$ l'id\'eal de $\OK$ associ\'e \`a la place $v$
 et $\qcar$ un id\'eal premier de $\Oe$ au-dessus de $\pcar$.
 On pose ${\mcar:=\pcar\cap\OF}$, $\Ftilde:=F\cap\Fbar_q$ et
 $\cardFtilde:=\log_q\#\Ftilde$.
 Soient $\nbclass$ le nombre de classes de $\OF$ et $m$ un g\'en\'erateur de l'id\'eal principal~$\mcar^\nbclass$.
 Fixons $(e_1,\dots,e_r)$ une base de~$\OF$ en tant que $A$-module
 et $\mathcal{R}_0$ un syst\`eme de repr\'esentants de $\OF/\mcar^\nbclass$ tel que:
 \[\maxR:=\max_{\lambda\in\Rcal_0}\{\deg_{\OF}\lambda\}\leq \maxB+\nbclass\,r\,\deg_{\OF}\mcar\]
 o\`u $\maxB:=\max_{1\leq i\leq r}\{\deg_{\OF} e_i\}$ (un tel syst\`eme de repr\'esentants existe d'apr\`es le lemme \ref{systrepborne}).
 Soit $\alpha\in L$ non de torsion pour $\psi$.
 Soit $i$\label{notai} un entier suffisamment grand, \emph{i.e.} tel que:
 \begin{equation}\label{eqintersectionLptdetorsion}
  L\cap H_F^+\!\left(\psi\!\left[\mcar^{\infty}\right]\right)
  =L\cap H_F^+\!\left(\psi\!\left[\mcar^{\nbclass \nu i}\right]\right)
 \end{equation}
 et
 \[\nu\,i\geq\log_q\!\left(4\,d_1\,[K:H_F^+]\,q^{\cardFtilde\,(\dinfprime+1)}\right)\text{.}\]
 Alors, en posant:
 \[\label{notaH1}
  H_1:=\Gal\!\left(E\!\left(\psi\!\left[\mcar^{\nbclass \nu i}\right]\right)/E\right)\text{,}
 \]
 on a
 \[H_1\simeq
  \Gal\!\left(H_F^+\!\left(\psi\!\left[\mcar^{\nbclass \nu i}\right]\right)
  /E\cap H_F^+\!\left(\psi\!\left[\mcar^{\nbclass \nu i}\right]\right)\right)\text{.}\]
 On peut donc voir $H_1$
 comme un sous-groupe de $\Gal\!\left(H_F^+\!\left(\psi\!\left[\mcar^{\nbclass \nu i}\right]\right)/H_F^+\right)$.
 Posons:
 \[\label{notaG1}
  G_1:=\Gal\!\left(H_F^+\!\left(\psi\!\left[\mcar^{\nbclass \nu i}\right]\right)/H_F^+\right)\text{.}
 \]
 On illustre la tour d'extensions que nous venons de d\'ecrire par le diagramme suivant:
 \[\xymatrix@-3ex{
   & E\!\left(\psi\!\left[\mcar^{\nbclass \nu i}\right]\right) \ar@{-}[dl]^{H_1} \ar@{-}[dr]& \\
   E & & H_F^+\!\left(\psi\!\left[\mcar^{\nbclass \nu i}\right]\right) \\
   & E\cap H_F^+\!\left(\psi\!\left[\mcar^{\infty}\right]\right) \ar@{-}[ul] \ar@{-}[ur] & \\
   & \HFplus \ar@{-}[u] \ar@/_2pc/@{--}[uur]_{G_1} &
  }\]
 D'apr\`es la proposition \ref{propextpttorsiontotalrami},
 l'extension $H_F^+\!\left(\psi\!\left[\mcar^{\nbclass \nu i}\right]\right)/H_F^+$
 est totalement ramifi\'ee en tout premier de $H_F^+$ au-dessus de $\mcar$, ainsi:
 \[\left[E\cap H_F^+\!\left(\psi\!\left[\mcar^{\nbclass \nu i}\right]\right):H_F^+\right]\leq e_\qcar\!\left(E/H_F^+\right)\leq d_1\,[K:H_F^+]\text{.}\]
 D'autre part, d'apr\`es l'isomorphisme (\ref{eqisogrpGaloisquotientcroix}) p. \pageref{eqisogrpGaloisquotientcroix}, on a:
 \[G_1\simeq\left(\OF/\mcar^{\nbclass \nu i}\right)^\times\text{.}\]
 On peut donc voir $H_1$ comme un sous-groupe de $\left(\OF/\mcar^{\nbclass \nu i}\right)^\times$
 et lui appliquer la proposition \ref{lemssgrpeltborne} avec $B:=d_1\,[K:H_F^+]+1$\label{notaB2}.
 Ainsi, en notant pour tout $a\in\OF$, $\abar$ l'image canonique de $a$ dans $\OF/\mcar^{\nbclass \nu i}$,
 on obtient qu'il existe deux \'el\'ements $a$ et $b$ de $\OF$ tels que $\abar\in H_1$, $\bbar\in H_1$,
 $\degOF a=\degOF b$, $\mu_\psi(a)=\mu_\psi(b)$ et:
 \[2<q^{\degOF(b-a)}<2\,B\,q^{\cardFtilde\,(\dinfprime+1)+\maxR}\text{.}\]
 De plus, 
 d'apr\`es l'\'egalit\'e 
 (\ref{eqsymboleArtinidealpttors}) p. \pageref{eqsymboleArtinidealpttors},
 en notant $\sigma_a\in H_1$\label{notasigmaa}
 et $\sigma_b\in H_1$ les symboles d'Artin associ\'es, respectivement, aux id\'eaux $(a)$ et~$(b)$ de $\OF$,
 pour tout $\lambda\in\psi\!\left[\mcar^{\nbclass \nu i}\right]$, on a:\enlargethispage{1cm}
 \begin{equation}\label{relsigmaa}
  \mu_\psi(a)\,\sigma_a(\lambda)=\psi_a(\lambda)
 \end{equation}
 et
 \begin{equation}\label{relsigmab}
  \mu_\psi(b)\,\sigma_b(\lambda)=\psi_b(\lambda)\text{.}
 \end{equation}
 Soient $\tilde{\sigma}_a$ et $\tilde{\sigma}_b$ des prolongements respectifs de $\sigma_a$ et $\sigma_b$ \`a
 $L\!\left(\psi\!\left[\mathfrak{m}^{\nbclass \nu i}\right]\right)$.
 Ainsi $\tilde{\sigma}_a$ et $\tilde{\sigma}_b$ sont des \'el\'ements de
 $\Gal\!\left(L\!\left(\psi\!\left[\mathfrak{m}^{\nbclass \nu i}\right]\right)/E\right)$.
 On pose:
 \begin{equation}\label{eqdefbeta}
  \beta:=\left(\mu_\psi(b)\,\tilde{\sigma}_b-\mu_\psi(a)\,\tilde{\sigma}_a\right)^{q^\dinfprime-1}(\alpha)
   -\psi_{(b-a)^{q^\dinfprime-1}}(\alpha)\in L\text{.}
 \end{equation}
 On a:
 \begin{eqnarray*}
  \hcan_{\psi}(\beta) & = & \hcan_{\psi}\!\left(\left(\mu_\psi(b)\,\tilde{\sigma}_b
   -\mu_\psi(a)\,\tilde{\sigma}_a\right)^{q^\dinfprime-1}(\alpha)-\psi_{(b-a)^{q^\dinfprime-1}}(\alpha)\right) \\
  & \leq & \hcan_{\psi}\!\left(\left(\mu_\psi(b)\,\tilde{\sigma}_b-\mu_\psi(a)\,\tilde{\sigma}_a\right)^{q^\dinfprime-1}(\alpha)\right)
   +\hcan_{\psi}\!\left(\psi_{(b-a)^{q^\dinfprime-1}}(\alpha)\right) \text{.}
 \end{eqnarray*}
 Or,
 \begin{eqnarray*}
  \big(\mu_\psi(b)\,\tilde{\sigma}_b\!\!\!\!\!\!\!\!\!\!\!\!\!\!
   & - & \!\!\!\!\!\!\!\!\!\!\!\!\!\!\mu_\psi(a)\,\tilde{\sigma}_a\big)^{q^\dinfprime-1}(\alpha) \\
   & = & \sum_{j=0}^{q^\dinfprime-1}(-1)^j\,\binom{q^d-1}{j}\,\mu_\psi(a)^j\,\mu_\psi(b)^{q^\dinfprime-1-j}
    \,\tilde{\sigma}_b^{q^\dinfprime-1-j}\circ\tilde{\sigma}_a^j(\alpha) \\
   & \underset{\mu_\psi(a)=\mu_\psi(b)}{=} &
    \sum_{j=0}^{q^\dinfprime-1}(-1)^j\,\binom{q^d-1}{j}\,\mu_\psi(a)^{q^\dinfprime-1}\,\tilde{\sigma}_b^{q^\dinfprime-1-j}\circ\tilde{\sigma}_a^j(\alpha) \\
   & = & \sum_{j=0}^{q^\dinfprime-1}(-1)^j\,\binom{q^d-1}{j}\,\tilde{\sigma}_b^{q^\dinfprime-1-j}\circ\tilde{\sigma}_a^j(\alpha) 
 \end{eqnarray*}
 car $\mu_\psi(a)\in\F_{q^\dinfprime}^*$ d'apr\`es la relation (\ref{eqmupsidansFqdstar}) p. \pageref{eqmupsidansFqdstar}.
 Comme pour tout $x\in\kbar$ et tout $\zeta\in\F_q^*$
 on a\footnote{En effet, on a:\quad
  $\hcan_\psi(\zeta\,x)=\hcan_\psi(\psi_\zeta(x))=q^{\degOF\zeta}\,\hcan_\psi(x)=\hcan_\psi(x)$. \\
   On pourrait aussi remarquer que $\binom{q^d-1}{j}=(-1)^j$ dans~$\F_q$,
   car $\binom{q^d-1}{0}=1$ et:
   \centerline{$\binom{q^d-1}{j}+\binom{q^d-1}{j+1}=\binom{q^d}{j+1}\equiv0\mod p\text{.}$}
  }
 $\hcan_\psi(\zeta\,x)=\hcan_\psi(x)$, on en d\'eduit:
 \begin{equation}\label{eqhautcanpremiermembrebeta}
  \hcan_{\psi}\!\left(\left(\mu_\psi(b)\,\tilde{\sigma}_b-\mu_\psi(a)\,\tilde{\sigma}_a\right)^{q^\dinfprime-1}(\alpha)\right)
   \leq q^\dinfprime\,\hcan_{\psi}(\alpha)\text{.}
 \end{equation}
 Par ailleurs,
 \begin{equation}\label{eqhautcansecondmembrebeta}
  \hcan_{\psi}\!\left(\psi_{(b-a)^{q^\dinfprime-1}}(\alpha)\right)= q^{(q^\dinfprime-1)\deg_{\OF}(b-a)}\,\hcan_{\psi}(\alpha)\text{.}
 \end{equation}
 En regroupant les relations ci-dessus, on obtient:
 \[
  \hcan_\psi(\beta) 
   \leq\left(q^\dinfprime+q^{(q^\dinfprime-1) \deg_{\OF}(b-a)}\right)\hcan_{\psi}(\alpha)
   \leq2\,\left(q^{\deg_{\OF}(b-a)}\right)^{q^\dinfprime-1}\,\hcan_{\psi}(\alpha) \text{.}
 \]
 Or, par choix de $a$ et $b$, on a:
 \[2<q^{\deg_{\OF}(b-a)}<2\,B\,q^{\cardFtilde\,(\dinfprime+1)+\maxR}\text{,}\]
 avec $B=d_1\,[K:H_F^+]+1$, ainsi:
 \begin{eqnarray*} 
  \hcan_\psi(\beta) & \leq & 2\,\left(2\,(d_1\,[K:H_F^+]+1)\,q^{\cardFtilde\,(\dinfprime+1)+\maxR}\right)^{q^\dinfprime-1}\,\hcan_{\psi}(\alpha) \\
  & \leq & 2\,\left(4\,d_1\,[K:k]\,q^{\cardFtilde\,(\dinfprime+1)+\maxR}\right)^{q^\dinfprime-1}\,\hcan_{\psi}(\alpha)\text{.}
 \end{eqnarray*}
 Supposons dans un premier temps que les hypoth\`eses du th\'eor\`eme \ref{thBminorcondi}
 (avec $\alpha$ remplac\'e par $\beta$) soient v\'erifi\'ees.
 D'apr\`es ce th\'eor\`eme on a alors:
 \[\hcan_\psi(\beta)\geq\left(2\,[K:k]\,q^{3\,\nbclass\,\dinfprime\,(q^\dinfprime-1)\,\gamma(\psi)\,\degOK\pcar\,d_1^2\,[K:k]}\right)^{-1}\]
 et donc:
 \begin{multline*}
  \hcan_{\psi}(\alpha)\geq \\
   \left(4\,(4\,d_1)^{q^\dinfprime-1}\,[K:k]^{q^\dinfprime}\,q^{(q^\dinfprime-1)\left(3\,\nbclass\,\dinfprime
   \,\gamma(\psi)\,d_1^2\,\deg_{\OK}\pcar\,[K:k]+\cardFtilde\,(\dinfprime+1)+\maxR\right)}\right)^{-1}\text{.}
 \end{multline*}
 Or,
 \begin{multline*}
  \maxR\leq\maxB+\nbclass\,r\,\deg_{\OF}\mcar
   \leq2\,\max\{1,\maxB\}\,\nbclass\,r\,\deg_{\OF}\mcar \\
   \leq2\,\max\{1,\maxB\}\,\nbclass\,r\,\deg_{\OK}\pcar\text{.}
 \end{multline*}
 D'o\`u
 \begin{eqnarray*}
  3\,\nbclass\,\dinfprime\!\!\!\! & \gamma(\psi) & \!\!\!\!d_1^2\,\deg_{\OK}\pcar\,[K:k]+\cardFtilde\,(\dinfprime+1)+\maxR \\
  & \leq & \!\!\!\!\!\!\!\!3\,\nbclass\,\dinfprime\,\gamma(\psi)\,d_1^2\,\deg_{\OK}\pcar\,[K:k]
   +4\,\max\{1,\maxB\}\,\nbclass\,r\,\cardFtilde\,\deg_{\OK}\pcar \\
  & \leq & \!\!\!\!\!\!\!\!\nbclass\,\deg_{\OK}\pcar\,\left(3\,\dinfprime\,\gamma(\psi)\,d_1^2\,[K:k]+4\,r\,\cardFtilde\,\max\{1,\maxB\}\right) \\
  & \leq & \!\!\!\!\!\!\!\!7\,\nbclass\,r\,\cardFtilde\,\max\{1,\maxB\}\,\gamma(\psi)\,d_1^2\,\deg_{\OK}\pcar\,[K:k] \text{.}
 \end{eqnarray*}
 Ainsi,
 \[
  \hcan_\psi(\alpha)\geq
   \left(4\,(4\,d_1)^{q^\dinfprime-1}\,[K:k]^{q^\dinfprime}\,q^{7\,\nbclass\,r\,\cardFtilde\,(q^\dinfprime-1)
   \,\max\{1,\maxB\}\,\gamma(\psi)\,d_1^2\,\deg_{\OK}\pcar\,[K:k]}\right)^{-1}\text{,}
 \]
 ce qui nous donne le r\'esultat \'enonc\'e en posant:
 \[\label{notac1}
  c_1:=7\,\nbclass\,r\,\cardFtilde\,(q^\dinfprime-1)\,\max\{1,\maxB\}\,\gamma(\psi)\text{,}
 \]
 et $\displaystyle{c_2:=4^{q^\dinfprime}}$.\label{notac2}
 Il est clair que les constantes $c_1$ et $c_2$ sont positives et ne d\'ependent que de $\psi$.
 Plus pr\'ecis\'ement, elles ne d\'ependent que de $\nbclass$, $\maxB$, $r$, $\dinfprime$ et $\gamma(\psi)$.
 Or~$\gamma(\psi)$ ne d\'epend que de~$\psi$
 et $\nbclass$, $\maxB$,~$r$ et $\dinfprime$ ne d\'ependent que de l'anneau $\OF$,
 donc de $\psi$ car~$\OF$ est l'anneau des endomorphismes de $\psi$.

 Il ne reste donc plus qu'\`a v\'erifier que $\beta$ n'est pas un point de torsion pour $\psi$
 et que $\beta$ v\'erifie la condition (\ref{cond1B}) du th\'eor\`eme \ref{thBminorcondi}:
 pour tout $\eta\in\Gal\!\left(L/E\right)$,
 on a $\eta(\beta)-\beta\notin\psi\!\left[\mathfrak{m}^{\infty}\right]\setminus\{0\}$.
 Supposons par l'absurde que~$\beta$ soit un point de torsion pour $\psi$.
 Dans ce cas, on a d'apr\`es la d\'efinition (\ref{eqdefbeta}) p.~\pageref{eqdefbeta} de $\beta$
 et les relations (\ref{eqhautcanpremiermembrebeta}) et (\ref{eqhautcansecondmembrebeta}) p.~\pageref{eqhautcansecondmembrebeta}:
 \begin{multline*}
  q^{(q^\dinfprime-1)\deg_{\OF}(b-a)}\,\hcan_{\psi}(\alpha)=\hcan_{\psi}\!\left(\psi_{(b-a)^{q^\dinfprime-1}}(\alpha)\right) \\
   =\hcan_{\psi}\!\left(\left(\mu_\psi(b)\,\tilde{\sigma}_b-\mu_\psi(a)\,\tilde{\sigma}_a\right)^{q^\dinfprime-1}(\alpha)\right)
   \leq q^\dinfprime\,\hcan_{\psi}(\alpha)\text{.}
 \end{multline*}
 Or, $q^{\deg_{\OF}(b-a)}>2$ et $2^{q^\dinfprime-1}\geq q^\dinfprime$ donc $\hcan_\psi(\alpha)=0$,
 ce qui contredit le fait que $\alpha$ ne soit pas un point de torsion pour~$\psi$.
 Montrons maintenant que $\beta$ v\'erifie la condition~(\ref{cond1B}) du th\'eor\`eme \ref{thBminorcondi}.
 Pour cela supposons par l'absurde qu'il existe
 ${\eta\in\Gal\!\left(L/E\right)}$\label{notaeta}
 tel que ${\theta:=\eta(\beta)-\beta\in L\cap\psi\!\left[\mathfrak{m}^{\infty}\right]\setminus\{0\}}$\label{notatheta}.
 Alors, en posant $\delta:=\eta(\alpha)-\alpha\in L$\label{notadelta},
 on a d'apr\`es la d\'efinition (\ref{eqdefbeta}) p. \pageref{eqdefbeta} de $\beta$
 et en utilisant le fait que
 $\Gal\!\left(L/E\right)$
 est ab\'elien:
 \begin{eqnarray*}
  \theta=\eta(\beta)-\beta & = &
   \eta\!\left(\left(\mu_\psi(b)\,\tilde{\sigma}_b-\mu_\psi(a)\,\tilde{\sigma}_a\right)^{q^\dinfprime-1}(\alpha)
    -\psi_{(b-a)^{q^\dinfprime-1}}(\alpha)\right) \\
   & & \quad-\left(\mu_\psi(b)\,\tilde{\sigma}_b-\mu_\psi(a)\,\tilde{\sigma}_a\right)^{q^\dinfprime-1}(\alpha)
    -\psi_{(b-a)^{q^\dinfprime-1}}(\alpha) \\
   & = & \left(\mu_\psi(b)\,\tilde{\sigma}_b-\mu_\psi(a)\,\tilde{\sigma}_a\right)^{q^\dinfprime-1}(\eta(\alpha))
    -\psi_{(b-a)^{q^\dinfprime-1}}(\eta(\alpha)) \\
   & & \quad-\left(\mu_\psi(b)\,\tilde{\sigma}_b-\mu_\psi(a)\,\tilde{\sigma}_a\right)^{q^\dinfprime-1}(\alpha)
    -\psi_{(b-a)^{q^\dinfprime-1}}(\alpha) \\
   & = & \left(\mu_\psi(b)\,\tilde{\sigma}_b-\mu_\psi(a)\,\tilde{\sigma}_a\right)^{q^\dinfprime-1}(\eta(\alpha)-\alpha) \\
   & & \quad-\psi_{(b-a)^{q^\dinfprime-1}}(\eta(\alpha)-\alpha) \\
   & = & \left(\mu_\psi(b)\,\tilde{\sigma}_b-\mu_\psi(a)\,\tilde{\sigma}_a\right)^{q^\dinfprime-1}(\delta)
    -\psi_{(b-a)^{q^\dinfprime-1}}(\delta)\text{.}
 \end{eqnarray*}
 Comme $\theta$ est un point de torsion pour $\psi$, on a d'apr\`es
 les relations (\ref{eqhautcanpremiermembrebeta}) et (\ref{eqhautcansecondmembrebeta})
 p. \pageref{eqhautcansecondmembrebeta} (avec $\alpha$ remplac\'e par $\delta$):
 \begin{eqnarray*}
  q^{(q^\dinfprime-1)\deg_{\OF}(b-a)}\,\hcan_{\psi}(\delta) & = & \hcan_{\psi}\!\left(\psi_{(b-a)^{q^\dinfprime-1}}(\delta)\right) \\
   & = & \hcan_{\psi}\!\left(\left(\mu_\psi(b)\,\tilde{\sigma}_b-\mu_\psi(a)\,\tilde{\sigma}_a\right)^{q^\dinfprime-1}(\delta)\right) \\
   & \leq & q^\dinfprime\,\hcan_{\psi}(\delta)\text{.}
 \end{eqnarray*}
 Donc $\delta$ est un point de torsion pour $\psi$ car $q^{\deg_{\OF}(b-a)}>2$, et ${2^{q^\dinfprime-1}\geq q^\dinfprime}$.
 D'apr\`es le lemme \ref{lemsompttorsion}, il existe $\delta_1\in L$\label{notadelta1}
 et $\delta_2\in L$\label{notadelta2} deux points de torsion de~$\psi$ tels que $\delta=\delta_1+\delta_2$
 avec $\delta_1$ d'ordre une puissance de $\mathfrak{m}$ et~$\delta_2$ d'ordre premier \`a~$\mathfrak{m}$.
 Remarquons que $\delta_1\in\psi\!\left[\mcar^{\nbclass \nu i}\right]$ par choix de $i$ (\cf relation~(\ref{eqintersectionLptdetorsion})).
 Ainsi, en utilisant les relations (\ref{relsigmaa}) et (\ref{relsigmab})
 p. \pageref{relsigmaa}, on obtient:
 \[\left(\mu_\psi(b)\,\tilde{\sigma}_b-\mu_\psi(a)\,\tilde{\sigma}_a\right)^{q^\dinfprime-1}(\delta_1)
   -\psi_{(b-a)^{q^\dinfprime-1}}(\delta_1)=0\text{.}\]
 Donc
 \[\theta=\left(\mu_\psi(b)\,\tilde{\sigma}_b-\mu_\psi(a)\,\tilde{\sigma}_a\right)^{q^\dinfprime-1}(\delta_2)
  -\psi_{(b-a)^{q^\dinfprime-1}}(\delta_2)\text{.}\]
 Puisque $\delta_2$ est d'ordre premier \`a $\mcar$, on en d\'eduit que $\theta$ est \'egalement d'ordre premier \`a $\mcar$.
 Il existe donc un id\'eal non nul $\bcar$ de $\OF$ premier \`a $\mcar$ tel que $\theta\in\psi[\bcar]$.
 D'autre part, $\theta$ est d'ordre une puissance de $\mcar$ par hypoth\`ese.
 Il existe donc un entier $j>0$ tel que $\theta\in\psi[\mcar^j]$.
 Mais les id\'eaux $\bcar$ et $\mcar$ \'etant premiers entre eux, il en est de m\^eme de $\bcar$ et $\mcar^j$.
 Ainsi, il existe $u\in\bcar$ et $v\in\mcar^j$ tels que $u+v=1$.
 Or, $\psi_u(\theta)=0$ car $\theta\in\psi[\bcar]$ et $\psi_v(\theta)=0$ car $\theta\in\psi[\mcar^j]$.
 D'o\`u
 \[\theta=\psi_1(\theta)=\psi_u(\theta)+\psi_v(\theta)=0\text{.}\]
 Ce qui contredit l'hypoth\`ese suivant laquelle $\theta\neq0$.
 Donc $\beta$ v\'erifie bien la condition~(\ref{cond1B}) du th\'eor\`eme~\ref{thBminorcondi},
 ce qui ach\`eve la preuve du th\'eor\`eme.
\end{proof}

\section{D\'emonstration du th\'eor\`eme \ref{thlehmerB}}\label{demothlehmerB}

Dans ce paragraphe nous montrons le r\'esultat principal. 
%

\begin{proof}[D\'emonstration du th\'eor\`eme \ref{thlehmerB}]
 Notons $r$ le rang de $\phi$, $R$ l'image de $\End(\phi)$ dans $\kbar$ via l'isomorphisme (\ref{eqplongementEndphikbar}) p. \pageref{eqplongementEndphikbar},
 $F$ le corps des fractions de $R$. 
 On a vu au \S\ref{subsectionEndoModDrinfeld} que $F$ est une extension CM de $k$ de degr\'e $r$
 et que $R$ est un ordre de $F$.
 On a \'egalement vu qu'on pouvait naturelement munir~$\phi$ d'une structure de $R$-module de rang $1$.
 
 Soient $\phi'$ le $R$-module de Drinfeld \'etendu de $\phi$ (\cf \'egalit\'e (\ref{eqextscalairesModDrinfeld}) p. \pageref{eqextscalairesModDrinfeld})
 et $\Ccar$ le conducteur de~$R$. D'apr\`es le \S\ref{subsectionThHayes}, le $R$-module de Drinfeld $\varphi'=\Ccar*\phi'$
 est isog\`ene \`a $\phi'$ via $\phi'_\Ccar$ et tel que $\End(\varphi')=\OF$.
 Le $R$-module $\varphi'$ peut donc s'\'etendre en un $\OF$-module $\varphi$ de rang $1$.
 Soient $\infty'$ l'unique place de $F$ au-dessus de la place $\infty$,
 $d$ le degr\'e sur $\F_q$ de la place $\infty'$, $\pi\in F$ une uniformisante de $\infty'$
 et $z\in\kbar$ une racine du polyn\^ome $X^{q^d-1}-\mu_\varphi(\pi^{-1})$.
 Alors, d'apr\`es la proposition \ref{propracinepolysgnnormalise},
 le $\OF$-module de Drinfeld $\psi:=z\,\varphi\,z^{-1}$ est de signe normalis\'e.
 De plus, comme $\psi=z\,\varphi\,z^{-1}$ et $\varphi':=\Ccar*\phi'$,
 le corps $F_{\phi}(z)$\label{notaFphiz} est un corps de d\'efinition de $\psi$.
 D'apr\`es la proposition \ref{propEgalHautCanparExtScal}, pour tout $\alpha\in\kbar$, on a:
 \[\hcan_\phi(\alpha)=\hcan_{\phi'}(\alpha)\quad\text{et}\quad\hcan_{\varphi'}(\alpha)=\hcan_\varphi(\alpha)\text{.}\]
 D'autre part, d'apr\`es la proposition \ref{prophcanetisog}, pour tout $\alpha\in\kbar$, on a:
 \[q^{\deg_R\Ccar}\,\hcan_{\phi'}(\alpha)=\hcan_{\varphi'}\!\left(\phi'_\Ccar(\alpha)\right)\]
 et
 \[\hcan_\varphi(\alpha)=\hcan_\psi(z\,\alpha)\]
 car $\deg_\tau\phi'_\Ccar=\deg_R\Ccar$ et $\deg_\tau z=0$ (puisque $z\in\kbar^*$).
 Ainsi
 \begin{equation}\label{eqRelationEntreHautCan}
  q^{\deg_R\Ccar}\,\hcan_\phi(\alpha)=\hcan_\psi\!\left(z\,\phi'_\Ccar(\alpha)\right)\text{.}
 \end{equation}
 
 Nous allons appliquer le th\'eor\`eme \ref{thlehmerBpsifini} \`a $\psi$ et au point $\beta=z\,\phi'_\Ccar(\alpha)$,
 ce qui nous donnera une minoration de $\hcan_\phi(\alpha)$.

 Remarquons tout d'abord qu'on peut supposer que $K$ contient $F_{\phi}(z)$.
 En effet, en posant ${K_1\!\!:=\!K\,F_{\phi}(z)}$\label{notaK1} et $L_1:=L\,F_{\phi}(z)$\label{notaL1},
 l'extension $L_1/K_1$ est galoisienne.
 Soient $G_1:=\Gal(L_1/K_1)$\label{notaG12} et $E_1:=L_1^{\Zcal(G_1)}$\label{notaE1}.
 On v\'erifie ais\'ement que si $E_0:=E\,F_{\phi}(z)$\label{notaE0}, alors $\Gal(L_1/E_0)$ est un sous-groupe de~$\Zcal(G_1)$,
 donc $E_1\subset E_0$.
 On illustre la tour d'extensions que nous venons de d\'ecrire par le diagramme suivant:
 \[\xymatrix@-3ex{
   & L_1 \\
   L \ar@{-}[ur] & \\
   & E_0 \ar@{-}[uu] \\
   E \ar@{-}[uu]^{\Zcal(G)>} \ar@{-}[ur] & E_1 \ar@{-}[u] \ar@/_2pc/@{--}[uuu]_{\Zcal(G_1)} \\
   & K_1 \ar@{-}[u] \ar@/_7pc/@{--}[uuuu]_{G_1} \\
   K \ar@{-}[uu] \ar@{-}[ur] \ar@/^4pc/@{--}[uuuu]^{G} &
  }\]
 De plus, si $v_1$\label{notav1} est une place de $K_1$ au-dessus de $v$,
 alors pour toute place~$w_1$\label{notaw1} de $E_1$ au-dessus de $v_1$,
 on a $\left[(E_1)_{w_1}:(K_1)_{v_1}\right]\leq d_0$. En effet:
 \[\left[(E_1)_{w_1}:(K_1)_{v_1}\right]\leq\left[(E_0)_{w_0}:(K_1)_{v_1}\right]
  \leq\left[E_w:K_v\right]\leq d_0\text{,}\]
 o\`u $w_0$\label{notaw0} est une place de $E_0$ au-dessus de $w_1$
 et $w$\label{notaw} est la restriction de~$w_0$ \`a $E$.
 Ainsi, s'il existe une constante $c'_0\!>\!0$ telle que pour tout ${\alpha\in L_1\!\setminus\!\phi(L_1)_\tors}$, on~ait:
 \[\hcan_\phi(\alpha)\geq q^{-c'_0\,\deg v_1\,d_0^2\,[K_1:k]}\text{,}\]
 alors, en posant $\displaystyle{c_0:=c'_0\,[F_\phi(z):k]^2}$,
 pour tout $\alpha\in L\setminus\phi(L)_\tors$, on a:
 \[\hcan_\phi(\alpha)\geq q^{-c_0\,\deg v\,d_0^2\,[K:k]}\]
 car
 \[\deg v_1\leq[F_\phi(z):k]\,\deg v\]
 et
 \[[K_1:K]\leq[F_\phi(z):k]\,[K:k]\text{.}\]
 Donc, quitte \`a remplacer  $(K,L)$ par $(K_1,L_1)$, on peut supposer que $F_\phi(z)\subset K$,
 ce qu'on fera d\'esormais.
 
 Soit $L'/K$\label{notaLprime} une extension galoisienne finie telle que $L'\subset L$
 et notons ${G':=\Gal(L'/K)}$\label{notaGprime} son groupe de Galois.
 Soit $E':=L'\cap E$\label{notaEprime}.
 Alors, l'extension $L'/E'$ est galoisienne et $\Gal(L'/E')$ est un sous-groupe de $\Zcal(G')$.
 De plus, pour toute place $w|v$ de $E'$, on a ${[E'_w:K_v]\leq d_0}$.
 Soit $\beta\in L'$ qui n'est pas un point de torsion de $\psi$.
 Alors, d'apr\`es le th\'eor\`eme \ref{thlehmerBpsifini}, il existe deux constantes $c_1>0$ et $c_2\geq1$
 qui ne d\'ependent que de $\psi$ (et donc que de $\phi$) telles que:
 \[\hcan_{\psi}(\beta)\geq\frac{q^{-c_1\,\deg v\,d_0^2\,[K:k]}}{c_2\,d_0^{q^d-1}\,[K:k]^{q^d}}\text{.}\]
 Or, d'apr\`es la preuve du th\'eor\`eme \ref{thlehmerBpsifini}, on a:
 \[c_1:=7\,\nbclass\,r\,\cardFtilde\,(q^\dinfprime-1)\,\max\{1,\maxB\}\,\gamma(\psi)\]
 et $\displaystyle{c_2:=4^{q^\dinfprime}}$.
 En majorant grossi\`erement, on obtient:
 \[
  c_2\,d_0^{q^d-1}\,[K:k]^{q^d}\leq q^{2\,q^d}\,q^{(q^d-1)\,d_0}\,q^{q^d\,[K:k]}
   \leq q^{4\,q^d\,d_0\,[K:k]}\text{.}
 \]
 On en d\'eduit:
 \[\hcan_{\psi}(\beta)\geq q^{-11\,\nbclass\,r\,\cardFtilde\,q^d\,\max\{1,\maxB\}\,\gamma(\psi)\,\deg v\,d_0^2\,[K:k]}\text{.}\]
 Soit $\alpha\in L\setminus\phi(L)_\tors$.
 En substituant $\beta$ par $z\,\phi'_\Ccar(\alpha)$ dans l'in\'egalit\'e pr\'ec\'edente
 et en utilisant l'\'egalit\'e (\ref{eqRelationEntreHautCan}) p. \pageref{eqRelationEntreHautCan}, on obtient:
 \[
  \hcan_{\phi}(\alpha)=\frac{\hcan_\psi\!\left(z\,\phi'_\Ccar(\alpha)\right)}{q^{\degR\Ccar}}
   \geq q^{-11\,\nbclass\,r\,\cardFtilde\,q^d\,\max\{1,\maxB\}\,\gamma(\psi)\,\max\{1,\degR\Ccar\}\,\deg v\,d_0^2\,[K:k]}\text{.}
 \]
 D'o\`u le r\'esultat souhait\'e en posant:
 \[\label{notacprime0}
  c'_0:=11\,\nbclass\,r\,\cardFtilde\,q^d\,\max\{1,\maxB\}\,\gamma(\psi)\,\max\{1,\degR\Ccar\}\text{.}
 \]
\end{proof}

\begin{rema}
 Il est possible d'expliciter la constante $c_0$ du th\'eor\`eme pr\'ec\'edent.
 En effet, en reprenant la preuve du th\'eor\`eme \ref{thlehmerB}, on obtient:
 \[\label{notac0}
  c_0=11\,\nbclass\,r\,\cardFtilde\,q^d\,\max\{1,\maxB\}\,\gamma(\psi)\,\max\{1,\degR\Ccar\}\,[F_\phi(z):k]^2\text{.}
 \]
 En particulier, dans le cas du module de Carlitz (\ie le $A$-module de Drinfeld $C$ de rang $1$ d\'efini sur $k$ par $C_T=T\,\tau^0+\tau$),
 on obtient que pour toute extension finie $K/k$ et pour tout $\alpha\in\Kab\setminus C(\Kab)_\tors$, on a:
 \[\hcan_C(\alpha)\geq q^{-11\,q\,\gamma(C)\,[K:k]^2}\text{.}\]
\end{rema}

\begin{rema}
 Comme corollaire \`a leur th\'eor\`eme (\cf cor. $1.3$ de \cite{ADZ}), F. Amoroso, S. David et U. Zannier
 montrent qu'une extension galoisienne infinie de groupe de Galois d'exposant fini a la propri\'et\'e~(B).
 En effet, dans sa th\`ese (\cf \cite{SaraTh}), S. Checcoli montre que si $K$ est un corps de nombres
 et si $L/K$ est une extension galoisienne infinie de groupe de Galois $G$ d'exposant fini,
 alors les degr\'es locaux sur $L$ sont uniform\'ement born\'es en toutes les places de $K$.
 Ensuite S.~Checcoli et P.~D\`ebes (\cf \cite{CHECCODEB}) ont g\'en\'eralis\'e ce r\'esultat \`a certaines classes de corps de fonctions.
 Une question naturelle est donc de savoir si ce r\'esultat est encore vrai dans le cadre des corps de fonctions en caract\'eristique positive.
 Cette question fera l'objet d'un prochain article.
\end{rema}

%

\backmatter

\section*{Remerciements}

Je souhaite remercier Francesco Amoroso et Vincent Bosser mes directeurs de th\`ese ainsi que Bruno Angl\`es pour toutes les discussions que nous avons eus \`a propos de ce travail.

\bibliographystyle{smfalpha}
\bibliography{mabiblio}

\end{document}